\DeclareMathOperator{\aut}{Aut}
\DeclareMathOperator{\cay}{Cay}
\DeclareMathOperator{\cyc}{Cyc}
\DeclareMathOperator{\dev}{Dev}
\DeclareMathOperator{\GL}{GL}
\DeclareMathOperator{\id}{id}
\DeclareMathOperator{\orb}{Orb}
\DeclareMathOperator{\out}{Out}
\DeclareMathOperator{\rk}{rk}
\DeclareMathOperator{\Span}{Span}
\DeclareMathOperator{\rad}{rad}
\DeclareMathOperator{\PL}{PL}
\DeclareMathOperator{\End}{End}
\def\tm#1{\item[{\rm (#1)}]}
\def\@seccntformat#1{\csname the#1\endcsname. } 
\def\@biblabel#1{#1.}
\title{Constructing linked systems of relative difference sets via Schur rings}
\author{Mikhail Muzychuk}
\address{Ben Gurion University of the Negev, Beer Sheva, Israel}
\email{muzychuk@bgu.ac.il}
\author{Grigory Ryabov}
\address{Ben Gurion University of the Negev, Beer Sheva, Israel}
\email{gric2ryabov@gmail.com}
\thanks{The second author is supported by The Israel Science Foundation (project No.~87792731)}
\date{}
\newtheorem{prop}{Proposition}[section]
\newtheorem{lemm}[prop]{Lemma}
\newtheorem{theo}[prop]{Theorem}
\newtheorem{corl}[prop]{Corollary}
\theoremstyle{definition}
\begin{document}

\maketitle

\begin{abstract}
In the present paper, we study relative difference sets (RDSs) and linked systems of them. It is shown that  a closed linked system of RDSs is always graded by a group. Based on this result, we also define a product of RDS linked systems sharing the same grading group. Further, we generalize the Davis-Polhill-Smith construction of a linked system of RDSs. Finally, we construct new linked system of RDSs in a Heisenberg group over a finite field and family of RDSs in an extraspecial $p$-group of exponent $p^2$. All constructions of new RDSs and their linked systems are based essentially on a usage of cyclotomic Schur rings.
\\
\\
\\
\textbf{Keywords}: relative difference sets, linked systems, Schur rings.
\\
\\
\\
\textbf{MSC}: 05B10, 20C05, 05E30. 
\end{abstract}

\section{Introduction}
 Linked systems of symmetric block designs were introduced by P. Cameron \cite{CS} as combinatorial objects associated to inequivalent 2-transitive representations of a given group. It turned out that these objects have close connections to association schemes, coding theory, and other parts of combinatorics. D. Higman generalized Cameron's idea and introduced uniformly linked strongly regular designs in \cite{H95}. A particular case of this object named linked system of divisible symmetric block designs was studied in details by H. Kharaghani and S. Suda  \cite{KhS}.

In 2014 J. Davis, W. Martin, and J. Polhill proposed to use difference sets to construct linked block designs -- they introduced a concept of linking\footnote{In what follows we prefer to use the word "linked" instead of "linking".} systems of difference sets~\cite{DMP} (see also~\cite{JLS}). A few years later this idea was extended to relative difference sets (RDSs) in~\cite{DPS1,DPS2}.  

In this paper we show how one can use Schur rings in order to construct systems of linked relative difference sets. As a result of this approach, we obtain two constructions of linked difference sets. 
The first one is based on cyclotomic Schur rings over the extraspecial groups of odd order. It provides  systems of linked relative
difference sets with new intersection numbers. The corresponding systems of group divisible block designs were unknown and marked in~\cite{KhS} as open cases.
The second construction generalizes the Davis-Polhill-Smith linked system~\cite{DPS1}.

To realize those constructions, we modified a well-known product concept
of two relative difference sets (Proposition 3.3). To our knowledge, this is the most general product defined so far.

The paper is organized as follows. Sections~\ref{srings} and~\ref{rds} contain all material about Schur rings and relative difference sets, respectively, needed in the paper. 

Section~4 deals with 
linked systems of RDSs. Only closed linked systems are considered. It is shown that there always exists a group 
which controls product of difference sets in the linked system. A product of two linked systems with the same associated group is defined in a subsection 4.3.

Section 5 presents a generalization of the Davis-Polhill-Smith construction~\cite{DPS1}.

The main result of Section 6 is formulated below. 
\begin{theo}\label{main2}
Let $q$ be an odd prime power and $r\geq 1$. Then there exists a closed linked system of semiregular relative difference sets with parameters~
$$(q^{2r},q,q^{2r},q^{2r-1},q,q^{2r-1}-q^r+q^{r-1},q^{2r-1}+q^{r-1})$$
in a Heisenberg group of dimension~$2r+1$ over $\mathbb{F}_q$.
\end{theo}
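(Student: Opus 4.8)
\emph{Proof strategy.} The plan is to exhibit the whole system inside a single cyclotomic Schur ring over $H=H_{2r+1}(q)$ and then to invoke the structure results of Section~4. Put $N=Z(H)\cong(\mathbb{F}_q,+)$, so $[H:N]=q^{2r}$; since $q^{2r-1}\cdot q=[H:N]$, any semiregular $(q^{2r},q,q^{2r},q^{2r-1})$-RDS of $H$ relative to $N$ meets every $N$-coset exactly once (two elements of $D$ in one coset would give a non-trivial difference in $N$), hence is the graph $D_f=\{(v,f(v)):v\in V\}$ of some $f\colon V\to\mathbb{F}_q$, where $V:=H/N\cong\mathbb{F}_q^{2r}$. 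Writing $H$ as the central extension of $V$ by $\mathbb{F}_q$ with a bilinear cocycle $c$ that may be taken alternating and with trivial radical, a direct difference count shows that $D_f$ is such an RDS iff $v\mapsto f(v+u)-f(v)-c(u,v)$ is balanced on $V$ for every $u\ne 0$, and that for two graphs the coefficients of $D_fD_g^{(-1)}$ are read off from the fibre sizes of $v\mapsto f(v+u)-g(v)-c(u,v)$; equivalently, in harmonic terms, one wants $\rho(D_f)\rho(D_f)^{*}=q^{2r}I$ for every $q^{r}$-dimensional irreducible $\rho$ of $H$.

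Next I would fix the cyclotomic model. Identify $V$ with $K:=\mathbb{F}_{q^{2r}}$, let $\sigma\colon x\mapsto x^{q^{r}}$ be the involution of $K/\mathbb{F}_{q^{r}}$, fix $\theta\in K$ with $\sigma(\theta)=-\theta$, and realize $H$ by the cocycle $c(x,y)=\mathrm{Tr}_{K/\mathbb{F}_q}(c_0\theta^{-1}\sigma(x)y)$ with $c_0\in\mathbb{F}_{q^{r}}^{*}$, whose skew part is a non-degenerate symplectic form, so the group is indeed $H_{2r+1}(q)$. The norm-one torus $U=\{u\in K^{*}:u^{q^{r}+1}=1\}$, of order $q^{r}+1$, acts on $H$ by automorphisms fixing $N$ pointwise, and the partition of $H$ into $\{1\}$, $N\setminus\{1\}$ and the $U$-orbits on $H\setminus N$ is the basic-set partition of a cyclotomic Schur ring $\mathcal{A}$. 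For the members I would take $D_a:=D_{aQ}$, $a\in\mathbb{F}_q$, where $Q(x)=\mathrm{Tr}_{\mathbb{F}_{q^{r}}/\mathbb{F}_q}(x^{q^{r}+1})$ is the trace of the Hermitian norm of $K/\mathbb{F}_{q^{r}}$: this is a non-degenerate quadratic form on $V$ of minus type with $\#\{x:Q(x)=0\}=q^{2r-1}-q^{r}+q^{r-1}$, and it is $U$-invariant because $Q(ux)=u^{q^{r}+1}Q(x)$; hence each $D_a$ is a union of $U$-orbits, so $D_a\in\mathcal{A}$ — which is exactly what will make the system closed.

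The verification then splits in two. For the RDS property one checks, for all $a\in\mathbb{F}_q$ and $u\ne 0$, that the relevant map is balanced; a short reduction collapses $2a$ times the polarization of $Q$ minus $c$ to the pairing $(x,y)\mapsto\mathrm{Tr}_{K/\mathbb{F}_q}((a-c_0\theta^{-1})\sigma(x)y)$, which is non-degenerate precisely because $c_0\theta^{-1}\notin\mathbb{F}_q$ (as $\theta^{-1}$ lies in the $(-1)$-eigenspace of $\sigma$); harmonically this is the Gauss-sum evaluation $|g(\chi)|=\sqrt{|K|}=q^{r}$ for a non-degenerate quadratic form, so every $D_a$ is a semiregular $(q^{2r},q,q^{2r},q^{2r-1})$-RDS. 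For the linking, fix $a\ne b$: the function $v\mapsto f(v+u)-g(v)-c(u,v)$ is quadratic with quadratic part $(a-b)Q$, again non-degenerate of minus type (scaling does not change the type of a form in an even number of variables); completing the square and applying the classical count for a minus-type quadric in $2r$ variables — $q^{2r-1}-(q-1)q^{r-1}$ solutions of $Q'=0$ and $q^{2r-1}+q^{r-1}$ solutions of $Q'=t\ne 0$ — yields exactly the two intersection numbers $q^{2r-1}-q^{r}+q^{r-1}$ and $q^{2r-1}+q^{r-1}$, the first on a $U$-invariant transversal $S_{ab}$ of $N$ and the second off it, so $D_aD_b^{(-1)}=(q^{2r-1}+q^{r-1})H-q^{r}S_{ab}\in\mathcal{A}$. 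Together with $D_a^{(-1)}=D_{-a}$, and with $D_aD_b$ attached, up to a similar two-valued term, to the scalar $a+b$, this makes the system a closed linked system with the stated parameters, graded — by the Section~4 theorem, and inside the product formalism of Proposition~3.3 — by $(\mathbb{F}_q,+)$.

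The point where I expect the real work to sit is the simultaneity of all of this: the field data $\sigma,\theta,c_0$ — hence the symplectic form underlying $H$ — must be chosen so that the single inequality $a-c_0\theta^{-1}\ne 0$ holds for all $q$ scalars $a\in\mathbb{F}_q$ at once (the cyclotomic heart of the argument), and so that the two-valued products $D_aD_b^{(-1)}$ fall into the non-negative cone spanned by $\{1\},N,S_{ab}$ and the system itself rather than splitting over further $U$-orbits; checking that the transversal $S_{ab}$ is genuinely $U$-invariant — which it is, since its ``height'' function factors through the Hermitian norm that $U$ fixes — is what secures closedness and therefore the grading. Once the cyclotomic bookkeeping is organized so that these coincidences are forced, all seven entries of the parameter list fall out as above.
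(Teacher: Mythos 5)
Your route is genuinely different from the paper's. The paper only constructs the system for $r=1$, as a fusion of the cyclotomic $S$-ring of the full group $\mathcal{M}\cong\mathbb{F}_{q^2}^\times$ acting on the $3$-dimensional Heisenberg group (Proposition~\ref{multiplication}, Corollary~\ref{heissystem}, with $\psi(i,j)=\frac{ij+\delta}{i+j}$ and $\delta$ a non-square), and then reaches dimension $2r+1$ by writing the Heisenberg group as a central product of $r$ copies of the $3$-dimensional one and applying the product of linked systems (Proposition~\ref{product}). Your $r=1$ case is a finer version of exactly that construction, and your verification that each single $D_a$ is a semiregular $(q^{2r},q,q^{2r},q^{2r-1})$-RDS is sound for all $r$ (the reduction to $a-c_0\theta^{-1}\neq 0$ is correct, since $c_0\theta^{-1}$ lies in the $(-1)$-eigenspace of $\sigma$ and hence outside $\mathbb{F}_q$). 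The gap sits precisely where you said the real work is: closedness.

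Completing the square in $v\mapsto aQ(v)+bQ(u-v)+c(v,u-v)$ with your cocycle $c(x,y)=\mathrm{Tr}_{K/\mathbb{F}_q}(c_0\theta^{-1}\sigma(x)y)$ gives the height function of $S_{ab}$ as
$$t_0(u)=\frac{ab}{a+b}\,Q(u)+\frac{1}{a+b}\,\mathrm{Tr}_{\mathbb{F}_{q^r}/\mathbb{F}_q}\bigl(c_0^{2}\theta^{-2}N(u)\bigr),\qquad N(u)=u^{q^r+1}.$$
This is $U$-invariant, so $S_{ab}$ lies in your Schur ring --- but that is not enough: closedness in the sense of \eqref{linked} requires $S_{ab}=D_e$ for some $e\in\mathbb{F}_q$, i.e. $t_0=eQ$, and since $N$ is surjective and the trace form is non-degenerate this forces $c_0^{2}\theta^{-2}\in\mathbb{F}_q$. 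Because $\theta^{q^r-1}=-1$, the element $\theta^{2}$ is always a non-square of $\mathbb{F}_{q^r}$, so you need $\theta^{2}\in\mathbb{F}_q^{*}\cdot(\mathbb{F}_{q^r}^{*})^{2}$. For $r$ odd this is achievable (and then $\delta=c_0^{2}\theta^{-2}$ is a non-square of $\mathbb{F}_q$, recovering the paper's $\psi$); for $r$ even every element of $\mathbb{F}_q^{*}$ is a square in $\mathbb{F}_{q^r}$, so no admissible $c_0\in\mathbb{F}_{q^r}^{*}$ exists and your system $\{D_a\}_{a\in\mathbb{F}_q}$ is not closed. The obstruction is not an artifact of your normalizations: the heights that do close up are the full family $\mathrm{Tr}_{\mathbb{F}_{q^r}/\mathbb{F}_q}(\gamma N(u))$, $\gamma\in\mathbb{F}_{q^r}$, whose associated group is $C_{q^r+1}$, and a closed subsystem of size $q$ would need a subgroup of order $q+1$, which does not exist when $r$ is even. (A related warning: iterating Proposition~\ref{product} from the $(q^2,q,q^2,q,q,1,q+1)$ system gives, for even $r$, the \emph{other} sign in \eqref{munu-reg}, namely $\mu=q^{2r-1}+q^{r}-q^{r-1}$ --- consistent with the fact that an orthogonal sum of an even number of minus-type planes has plus type --- so the minus-type intersection numbers you are targeting for even $r$ are not delivered by either route, and the statement's parameters should be read with that caveat.)
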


In Section 7, a semiregular relative difference set over the extraspecial group of exponent $p^2$ is described.
More precisely, we have the following

\begin{theo}\label{main1}
Let $p$ be an odd prime and $r\geq 1$. Then there exists a semiregular relative difference set with parameters~$(p^{2r},p,p^{2r},p^{2r-1})$ in an extraspecial group of order~$p^{2r+1}$ of exponent $p^2$.
\end{theo}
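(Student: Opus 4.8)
The plan is to produce the relative difference set directly, by realizing the extraspecial group of order $p^{2r+1}$ and exponent $p^{2}$ as an explicit central extension of $(\mF_{p^{2r}},+)$ by $\mZ/p\mZ$ and taking the image of the trivial section. The tool is the standard dictionary between semiregular relative difference sets relative to a central subgroup and orthogonal cocycles (cf.~Section~\ref{rds}): given a central extension $1\to N\to G\to Q\to 1$ with $N\cong\mZ/p\mZ$ and $|Q|=m$, a normalized set-theoretic section $s\colon Q\to G$, and the associated $2$-cocycle $\beta\colon Q\times Q\to N$ (so that $s(x)s(y)=\beta(x,y)\,s(x+y)$, with $Q$ and $N$ written additively and $N$ included into $G$), the transversal $R=s(Q)$ is a semiregular relative difference set relative to $N$ with parameters $(m,p,m,m/p)$ if and only if $\beta$ is \emph{orthogonal}, that is, for every $z\in Q\setminus\{0\}$ the map $v\mapsto\beta(z,v)$ takes each value of $N$ exactly $m/p$ times. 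Indeed the cocycle identity gives $s(u)s(v)^{-1}=(-\beta(z,v))\,s(z)$ with $z=u-v$, so for $g$ lying in the coset $N\,s(z)$ with $z\neq 0$ the number of pairs $(u,v)$ with $s(u)s(v)^{-1}=g$ equals the number of $v\in Q$ with $\beta(z,v)$ equal to a prescribed element of $N$; this count is $m/p$ for every such $g$ exactly when $\beta$ is orthogonal, while it is $0$ for $g\in N\setminus\{0\}$. So it suffices to realize the required group as such an extension with an orthogonal cocycle.

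I would take $Q=(\mF_q,+)$ with $q=p^{2r}$, $N=\mZ/p\mZ$, and $\beta=B+\beta_\ell$, where $B\colon\mF_q\times\mF_q\to\mF_p=\mZ/p\mZ$ is a nondegenerate alternating $\mF_p$-bilinear form and $\beta_\ell=\ell^{*}(\varepsilon)$ is the pull-back, along a nonzero $\mF_p$-linear functional $\ell\colon\mF_q\to\mZ/p\mZ$, of the carry cocycle $\varepsilon(a,b)=(\tilde a+\tilde b-\widetilde{a+b})/p$ of $\mZ/p\mZ$ (the cocycle whose extension is $\mZ/p^{2}\mZ$; here $\tilde a\in\{0,\dots,p-1\}$ denotes the representative of $a$). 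Both summands are $2$-cocycles, hence so is $\beta$; let $G=G(\beta)$ be the resulting group on $\mF_q\times(\mZ/p\mZ)$, with $(u,a)(v,b)=(u+v,\ a+b+\beta(u,v))$. Two short computations pin $G$ down. First, the commutator of $(u,a)$ and $(v,b)$ is $(0,\ \beta(u,v)-\beta(v,u))=(0,\ 2B(u,v))$, because $\varepsilon$ is symmetric; since $2B$ is nondegenerate alternating and $p$ is odd, $Z(G)=G'=\{0\}\times(\mZ/p\mZ)=N$ and $G/N\cong(\mF_q,+)$ is elementary abelian, so $G$ is extraspecial of order $p^{2r+1}$. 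Second, $(v,0)^{k}=(kv,\ \sum_{j=1}^{k-1}\beta(jv,v))$; here the $B$-part of $\sum_{j=1}^{p-1}\beta(jv,v)$ is $\binom{p}{2}B(v,v)=0$, while a telescoping of carries yields $\sum_{j=1}^{p-1}\varepsilon(j\ell(v),\ell(v))\equiv\ell(v)\pmod p$, so $(v,0)^{p}=(0,\ell(v))$, which is nontrivial whenever $\ell(v)\neq 0$. Thus $G$ has exponent $p^{2}$, and by the classification of extraspecial $p$-groups it is the group named in the theorem.

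It remains to verify that $\beta$ is orthogonal. For $z\neq 0$ we have $\beta(z,v)=B(z,v)+\varepsilon(\ell(v),\ell(z))$. If $\ell(z)=0$, the second summand vanishes identically and $v\mapsto B(z,v)$ is balanced because $B$ is nondegenerate and $z\neq 0$. If $\ell(z)\neq 0$, then $B(z,\cdot)$ and $\ell$ are linearly independent functionals: were $B(z,\cdot)=\mu\ell$, evaluating at $v=z$ and using $B(z,z)=0$ would force $\mu=0$, hence $B(z,\cdot)=0$, contradicting nondegeneracy. Consequently $v\mapsto(B(z,v),\ell(v))$ is equidistributed over $(\mZ/p\mZ)^{2}$, and since $\varepsilon(\ell(v),\ell(z))$ depends on $v$ only through $\ell(v)$, the map $v\mapsto\beta(z,v)$ is balanced. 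Therefore $R=\{(v,0):v\in\mF_q\}$ is a semiregular relative difference set relative to $Z(G)$ with parameters $(p^{2r},p,p^{2r},p^{2r-1})$, which is the assertion.

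The delicate point is the choice of cocycle: $B$ alone gives the exponent-$p$ extraspecial group, and one needs a perturbation that raises the exponent to $p^{2}$ without destroying orthogonality. The carry term $\beta_\ell$ does this because it is symmetric, hence inert on commutators (so $G$ stays extraspecial with commutator form $2B$), and because its only ``bad'' directions --- the $z$ with $B(z,\cdot)$ proportional to $\ell$ --- all satisfy $\ell(z)=0$ on account of $B(z,z)=0$, which is precisely where $\beta_\ell(z,\cdot)$ vanishes. I would also note an alternative that bootstraps from Theorem~\ref{main2}: $G$ is a central product, over the common center, of the order-$p^{3}$ extraspecial group of exponent $p^{2}$ with $r-1$ copies of the order-$p^{3}$ Heisenberg group over $\mF_p$, so the general product of relative difference sets (Proposition~3.3) reduces the theorem for arbitrary $r$ to the case $r=1$ above together with the relative difference set of Theorem~\ref{main2} at $q=p$; the direct construction, however, is uniform in $r$, and taking $B$ to be a cyclotomic form brings it into the Schur-ring framework of Section~\ref{srings}.
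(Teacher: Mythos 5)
Your proof is correct, but it follows a genuinely different route from the paper's. The paper first handles the order-$p^3$ case $M_{p^3}$ by exhibiting a cyclotomic Schur ring $\cyc(K,G)$ for a Frobenius group $K\leq\aut(G)$ of order $p(p-1)$, identifying its basic sets, and then extracting the RDS $Y_i=X_i\cup Y$ from the partial difference set of Swartz via a short group-ring computation (Proposition~\ref{p24}); the general case $r\geq 1$ is then obtained by writing the extraspecial group of exponent $p^2$ as a central product of one copy of $M_{p^3}$ with Heisenberg groups and invoking the product construction of Proposition~\ref{product0} together with Corollary~\ref{heis2r} --- exactly the ``alternative'' you sketch in your last paragraph. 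Your main argument instead builds the group and the RDS in one stroke, uniformly in $r$, as the image of the trivial section of the central extension with cocycle $\beta=B+\ell^{*}(\varepsilon)$; the verification that the symmetric carry term raises the exponent to $p^2$ without disturbing the commutator form $2B$, and that $\beta(z,\cdot)$ stays balanced because the bad directions $B(z,\cdot)\in\langle\ell\rangle$ force $\ell(z)=0$, is complete and correct (the telescoping identity $\sum_{j=1}^{p-1}\varepsilon(jc,c)=c$ and the equidistribution of $v\mapsto(B(z,v),\ell(v))$ are exactly the right two computations). What your approach buys is self-containedness and uniformity: no reliance on Swartz's PDS, on the automorphism group of $M_{p^3}$, or on the central-product machinery, and the orthogonal-cocycle dictionary you use is standard in this area (compare the cocyclic viewpoint of~\cite{CHL}). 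What the paper's approach buys is extra structure that your construction does not immediately provide: the RDSs $Y_i$ are reversible, they sit inside an explicit Schur ring, they come in an equivalence class acted on by $\aut(G)$ (Lemma~\ref{sigma}), and the companion sets $Z_i$ give RDSs with a \emph{nonnormal} forbidden subgroup, which is one of the points emphasized in the introduction. Either route proves the theorem as stated.
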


RDSs with the parameters~$(p^{2r},p,p^{2r},p^{2r-1})$ were studied in several papers (see, e.g.,~\cite{LM,MS,Schmidt}). Notice that the RDSs from Theorems~\ref{main2} and~\ref{main1} can not be obtained using the construction of RDSs over nonabelian $p$-groups from~\cite[Corollary~4.3]{MS}. In the case when an extraspecial group has order~$p^3$ and exponent~$p^2$, the forbidden subgroup in the RDS from Theorem~\ref{main1} can be nonnormal in the underlying group unlike the most known cases.

\section{$S$-rings}\label{srings}

In this section, we provide a necessary background for $S$-rings - a special class of group rings which goes back to the classical works of I.~Schur and H.~Wielandt~\cite{Schur,Wi}. We use the notations and terminology from~\cite[Section~2.4]{CP} and~\cite{Ry}, where the most part of the material is contained.

Let $G$ be a finite group and $\mathbb{Z}G$ the group ring over the integers. The identity element of $G$ is denoted by $e$. A product of two elements $x=\sum_{g\in G} x_g g,y=\sum_{g\in G} y_g g\in\mathbb{Z}G$ will be written as $x\cdot y$. Their scalar product $(x,y)$ is defined via $(x,y) = \sum_{g\in G} x_g y_g$.

Given $X\subseteq G$, we set $$\underline{X}=\sum \limits_{x\in X} {x}\in\mathbb{Z}G,~X^{(-1)}=\{x^{-1}:x\in X\},~X^\#=X\setminus\{e\}.$$
A subset $X$ of $G$ is called \emph{reversible} if $X=X^{(-1)}$. It is easy to check that if $X\subseteq H\leq G$, then $\underline{H}\cdot \underline{X}=\underline{X}\cdot \underline{H}=|X|\underline{H}$. 

A subring  $\mathcal{A}\subseteq \mathbb{Z} G$ is called an \emph{$S$-ring} (a \emph{Schur ring}) over $G$ if there exists a partition $\mathcal{S}=\mathcal{S}(\mathcal{A})$ of~$G$ such that:
\begin{enumerate}

\tm{1} $\{e\}\in\mathcal{S}$;

\tm{2} if $X\in\mathcal{S}$, then $X^{(-1)}\in\mathcal{S}$;

\tm{3} $\mathcal{A}=\Span_{\mathbb{Z}}\{\underline{X}:\ X\in\mathcal{S}\}$.
\end{enumerate}

\noindent  The elements of $\mathcal{S}$ are called the \emph{basic sets} of $\mathcal{A}$ and the number $\rk(\mathcal{A})=|\mathcal{S}|$ is called the \emph{rank} of~$\mathcal{A}$. It is easy to see that $\mathbb{Z}G$ is an $S$-ring. One more example of an $S$-ring is provided by the partition $\{\{e\},G^\#\}$. Clearly, the corresponding $S$-ring is of rank~$2$. One can verify that if $X,Y\in \mathcal{S}(\mathcal{A})$, then $XY\in \mathcal{S}(\mathcal{A})$ whenever $|X|=1$ or $|Y|=1$. We say that a subset $S\subseteq G$ is a \emph{fusion} of $\mathcal{A}$ if $S$ is a union of some basic sets of $\mathcal{A}$.

Given $X,Y,Z\in\mathcal{S}$, the number of distinct representations of $z\in Z$ as a product $z=xy$ with $x\in X$ and $y\in Y$ does not depend on the choice of $z\in Z$. Denote this number by $c^Z_{XY}$. One can see that $\underline{X}\cdot\underline{Y}=\sum_{Z\in \mathcal{S}(\mathcal{A})}c^Z_{XY}\underline{Z}$. Therefore the numbers  $c^Z_{XY}$ are the structure constants of $\mathcal{A}$ with respect to the basis $\{\underline{X}:\ X\in\mathcal{S}\}$.


Let $K \leq \aut(G)$. Then the orbit partition $\orb(K,G)$ of  $G$ defines the $S$-ring over~$G$ which is called \emph{cyclotomic} and denoted by $\cyc(K,G)$.

An $S$-ring $\mathcal{A}$ is said to be \emph{amorphic} if for every partition $\mathcal{S}^\prime$ of $G$ such that $\{e\}\in \mathcal{S}^\prime$ and each set from $\mathcal{S}^\prime$ is a union of some sets from $\mathcal{S}$, the $\mathbb{Z}$-module $\Span_{\mathbb{Z}}\{\underline{X}:\ X\in\mathcal{S}^\prime\}$ is also an $S$-ring. An amorphic $S$-ring is a special case of an amorphic association scheme introduced in~\cite{GIK}. For details on amorphic association schemes, we refer the reader to~\cite{DM}. Clearly, every $S$-ring of rank at most~$3$ is amorphic.

\section{Relative difference sets}\label{rds}
This section contains a necessary background for relative difference sets (RDSs for short). The most of material here is taken from~\cite{BJL,Pott2}. A special attention is paid to connection of RDSs with partial difference sets (PDSs for short) and products of RDSs (see Subsections~$3.2$ and~$3.3$, respectively).

\subsection{Definitions}

Let $N$ be a subgroup of a group $G$. A subset $X$ of $G$ is called a \emph{relative to~$N$ difference set} in $G$ if 
$$\underline{X}\cdot\underline{X}^{(-1)}=ke+\lambda(\underline{G}-\underline{N}),$$
for $k=|X|$ and some positive integer $\lambda$. The numbers  $(m,n,k,\lambda)$, where $m=|G:N|$ and $n=|N|$, are called the \emph{parameters} of $X$ and the subgroup $N$ is called a \emph{forbidden subgroup}. Although in many concrete examples $N$ is normal in $G$, we do not suppose normality of $N$. A simple counting argument implies that $k(k-1)=\lambda n(m-1)$. Since nontrivial elements of $N$ do not appear in $\underline{X}\cdot \underline{X}^{(-1)}$, the intersection $X\cap Ng$ contains at most one element for each $g\in G$. If $|X\cap Ng|=1$ for every $g\in G$ or, equivalently, $X$ is a right transversal for $N$ in $G$, then $X$ is called \emph{semiregular}. In this case, $k=|X|=|G:N|=m=\lambda n$ and
$$\underline{N}\cdot\underline{X}=\underline{NX}=\underline{G}.$$
If, in addition, $N$ is normal in $G$, then we also have
$$\underline{X}\cdot\underline{N}=\underline{G}.$$
If $X$ is semiregular, then the parameters of $X$ can be expressed via $n$ and $\lambda$ as $(m,n,k,\lambda)=(\lambda n, n, \lambda n,\lambda)$. If $X$ is reversible semiregular RDS with forbidden subgroup~$N$, then $N\cap X=\{e\}$. An RDS $X$ is  called  \emph{symmetric}~\cite{Hir} if $X^{(-1)}$ is an RDS with the same parameters, but may admit a different forbidden subgroup. According to~\cite{Hir}, nonsymmetric RDSs exist. 

\subsection{Connection with partial difference sets}

Recall that a subset $X$ of $G$ is called a \emph{partial difference set} (\emph{PDS} for short) if 
$$\underline{X}\cdot\underline{X}^{(-1)}=|S|e+\lambda \underline{X}+\mu(\underline{G}^\#-\underline{X})$$
for some nonnegative integers $\lambda$ and $\mu$. The numbers $(v,k,\lambda,\mu)$, where $v=|G|$ and $k=|X|$, are called the \emph{parameters} of $X$. If $X$ is a reversible PDS, then the Cayley graph $\cay(G,X)$ is strongly regular. More details on PDSs can be found, e.g., in~\cite{Ma}. The following lemma provides a connection between RDS and PDS in some special case. In this lemma, we use the notions of a metric association scheme and antipodal distance regular cover which can be found in~\cite{DMM} and~~\cite{GH}, respectively.

\begin{lemm}\label{rdspds}
Let $e\in X\subseteq G$ be a reversible subset of a group $G$. Then the following statements are equivalent: 
\begin{enumerate}
	\tm{1} $X$ is a semiregular reversible $(n\lambda,n,n\lambda,\lambda)$-RDS in $G$ with a forbidden subgroup $N$;
	\tm{2} $\cay(G,X^\#)$ is an antipodal distance regular cover of a complete graph $K_{n\lambda}$ with parameters\footnote{We use here notation proposed in~\cite{GH}.} $(n\lambda,n,\lambda)$.
\end{enumerate}
If one of the above conditions holds and $\lambda=n$, then $S=X^\#\cup N^\#$ is a reversible PDS in $G$ with parameters $(n^3,n^2+n-2,n-2,n+2)$ and $\cay(G,S)$ is a strongly regular graph.
\end{lemm}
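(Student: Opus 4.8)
The plan is to prove the three implications in turn, all by manipulating the group-ring identities. For (1)$\Rightarrow$(2): assume $X$ is a semiregular reversible $(n\lambda,n,n\lambda,\lambda)$-RDS with forbidden subgroup $N$. Since $e\in X$ and $X=X^{(-1)}$, the set $X^\# = X\setminus\{e\}$ is a reversible connection set, so $\Gamma=\cay(G,X^\#)$ is an undirected graph. I would first record the defining equation $\underline{X}\cdot\underline{X}^{(-1)} = n\lambda\, e + \lambda(\underline{G}-\underline{N})$ and expand the left side as $(\underline{X^\#}+e)(\underline{X^\#}+e) = \underline{X^\#}^2 + 2\underline{X^\#} + e$ (using $X^{\#(-1)}=X^\#$), which gives $\underline{X^\#}^2 = (n\lambda - 1)e - 2\underline{X^\#} + \lambda\underline{G} - \lambda\underline{N}$. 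To identify $\Gamma$ as an antipodal cover of $K_{n\lambda}$, I would use that semiregularity means $X$ is a right transversal of $N$, so $\underline{N}\cdot\underline{X}=\underline{G}$ (stated in the excerpt), i.e.\ the cosets $Ng$, $g\in X$, partition $G$; the fibers of the cover are exactly the right cosets of $N$. The number of $X^\#$-edges between two vertices $g,h$ in the same fiber is $0$ (since $hg^{-1}\in N^\#$ cannot be written $x_1 x_2^{-1}$ with $x_i\in X$), confirming the fibers are cocliques, and between two vertices in different fibers the count is $\lambda$, which is the defining property of a $(n\lambda,n,\lambda)$-cover in the \cite{GH} notation. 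The degree of $\Gamma$ is $|X^\#| = n\lambda - 1$, consistent. Conversely, for (2)$\Rightarrow$(1), I would reverse this: the parameters $(n\lambda,n,\lambda)$ of an antipodal distance-regular cover of $K_{n\lambda}$ translate, via the quotient being $K_{n\lambda}$ and the fibers having size $n$, into precisely the intersection-number conditions that reassemble the RDS equation for $X=X^\#\cup\{e\}$ relative to $N$ (the fiber through $e$); reversibility and $e\in X$ are given, and semiregularity is exactly the statement that the fibers are cocliques of the cover.

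For the final claim, assume the equivalent conditions hold and $\lambda=n$, so $k=|X|=n^2$ and $|G|=n^3$. Put $S = X^\#\cup N^\#$. Since $X$ is reversible semiregular with $\lambda=n\geq\dots$ we noted $N\cap X=\{e\}$, so this is a disjoint union and $|S| = (n^2-1)+(n-1) = n^2+n-2$, and $S$ is reversible since both $X^\#$ and $N^\#$ are. Now I would compute $\underline{S}\cdot\underline{S}^{(-1)} = \underline{S}^2$ by expanding $(\underline{X^\#}+\underline{N^\#})^2 = \underline{X^\#}^2 + \underline{X^\#}\cdot\underline{N^\#} + \underline{N^\#}\cdot\underline{X^\#} + \underline{N^\#}^2$. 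The term $\underline{X^\#}^2$ comes from the displayed expansion above with $\lambda=n$. For $\underline{N^\#}^2$ use $\underline{N^\#}=\underline{N}-e$ and $\underline{N}^2 = n\underline{N}$, giving $\underline{N^\#}^2 = (n-2)\underline{N^\#} + (n-1)e$. For the cross terms, $\underline{N}\cdot\underline{X}=\underline{G}$ and $\underline{X}\cdot\underline{N}=\underline{G}$ (here I would need $N\trianglelefteq G$ — see the obstacle below, or argue it from $\lambda=n$) yield $\underline{X^\#}\cdot\underline{N^\#} = \underline{X}\cdot\underline{N} - \underline{X} - \underline{N} + e = \underline{G} - \underline{X^\#} - \underline{N^\#} - e$, and symmetrically for the other cross term. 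Summing the four pieces, the $\underline{G}$ contributions and the coefficients should collapse to the form $|S|e + \lambda'\underline{X^\#}\,$+\ldots; matching with the PDS definition $\underline{S}\cdot\underline{S}^{(-1)} = |S|e + \lambda'\underline{S} + \mu'(\underline{G}^\#-\underline{S})$ I expect to read off $|S| = n^2+n-2$, $\lambda' = n-2$, $\mu' = n+2$. Then reversibility of $S$ gives that $\cay(G,S)$ is strongly regular.

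The step I expect to be the main obstacle is the handling of the cross terms $\underline{X}\cdot\underline{N}$ in the final PDS computation: the paper deliberately does not assume $N\trianglelefteq G$, and semiregularity only gives $\underline{N}\cdot\underline{X}=\underline{G}$ for free, not $\underline{X}\cdot\underline{N}=\underline{G}$. So I would need to verify that under the hypothesis $\lambda=n$ the set $S=X^\#\cup N^\#$ still satisfies the PDS equation regardless — either by showing $\underline{X}\cdot\underline{N}=\underline{G}$ follows (e.g.\ because a reversible semiregular RDS forces enough symmetry, or because $X^\#\cup N^\#$ reversible together with $\underline{N}\cdot\underline{X^\#}\cdot$ arguments pin it down), or by computing $\underline{S}^2$ in a way that only uses the symmetric information $\underline{N}\cdot\underline{X}=\underline{G}$ and its reverse $\underline{X}^{(-1)}\cdot\underline{N}^{(-1)}=\underline{G}$, i.e.\ $\underline{X}\cdot\underline{N}=\underline{G}$ after all since $N,X$ reversible — actually this last observation may resolve it cleanly: $N^{(-1)}=N$ and $X^{(-1)}=X$ together with $\underline{N}\cdot\underline{X}=\underline{G}$ give $\underline{X}\cdot\underline{N} = (\underline{X}^{(-1)}\cdot\underline{N}^{(-1)}) = (\underline{N}\cdot\underline{X})^{(-1)} = \underline{G}^{(-1)} = \underline{G}$. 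If that goes through, the rest is a routine bookkeeping of coefficients.
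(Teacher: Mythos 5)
Your proposal is correct, and for the equivalence $(1)\Leftrightarrow(2)$ it takes a genuinely different route from the paper. The paper argues structurally: from the RDS identity it builds the rank-$4$ $S$-ring $\langle e,\underline{X}^\#,\underline{N}^\#,\underline{G\setminus(X\cup N)}\rangle$, observes that $\underline{X_i}$ is a polynomial of degree $i$ in $\underline{X_1}$, and reads off that the corresponding metric association scheme makes $\cay(G,X^\#)$ a diameter-$3$ antipodal distance-regular graph; conversely it starts from the distance partition, uses antipodality to see that $X_3\cup\{e\}$ is a subgroup $N$ of order $n$, and recovers the RDS equation from the intersection numbers. You instead count directly: fibers are the cosets $Ng$, they are cocliques, and two vertices in distinct fibers have the right number of common neighbours, which is exactly the Godsil--Hensel combinatorial characterization of an $(n\lambda,n,\lambda)$-cover. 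Your route is more elementary and avoids the $S$-ring formalism; the paper's buys the full metric scheme (all four distance graphs at once) essentially for free, which is why the converse direction is cleaner there. The one thin spot in your sketch is $(2)\Rightarrow(1)$: you call $N$ ``the fiber through $e$'' but do not say why it is a \emph{subgroup} --- this needs the observation that the antipodal relation is an equivalence relation invariant under right translation, so its class at $e$ is closed under products and inverses. You should also note explicitly that $a_1=\lambda-2$ and $c_2=\lambda$ combine with $b_0=n\lambda-1$ to give coefficient $\lambda$ on all of $G\setminus N$ in $\underline{X}\cdot\underline{X}^{(-1)}$, and that the zero coefficient on $N^\#$ forces $|X\cap Ng|\le 1$, whence semiregularity from $|X|=|G:N|$.

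For the final PDS claim your computation is the same as the paper's (expand $(\underline{X}+\underline{N})^2$ and collect terms), and you correctly identified and resolved the one delicate point: the paper justifies $\underline{X}\cdot\underline{N}+\underline{N}\cdot\underline{X}=2\underline{G}$ ``by semiregularity,'' but semiregularity alone only gives $\underline{N}\cdot\underline{X}=\underline{G}$ when $N$ is not assumed normal. Your fix --- apply the inversion anti-automorphism of $\mathbb{Z}G$ and use $X^{(-1)}=X$, $N^{(-1)}=N$ to get $\underline{X}\cdot\underline{N}=(\underline{N}\cdot\underline{X})^{(-1)}=\underline{G}$ --- is exactly right and is in fact the missing half-line in the paper's own argument.
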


\begin{proof}
If $X$ satisfies~$(1)$, then  the $\mathbb{Z}$-submodule spanned by $e,\underline{X},\underline{N},\underline{G}$ is a $\mathbb{Z}$-subalgebra of $\mathbb{Z}G$. Now it follows from $\langle e,\underline{X}^\#,\underline{N}^\#,\underline{G\setminus (X\cup N)}\rangle=\langle e,\underline{X},\underline{N},\underline{G}\rangle$ that 
$$\langle e,\underline{X}^\#,\underline{N}^\#,\underline{G\setminus (X\cup N)}\rangle$$ 
is an $S$-ring over $G$. Put $X_0=\{e\}$, $X_1=X^\#$, $X_2=G\setminus(X\cup N)$, and $X_3=N^\#$. It follows from $\underline{X}^2=\lambda ne +\lambda (\underline{G}-\underline{N})$ and $\underline{X}\cdot\underline{N}=\underline{G}$ that $\langle\underline{X_i}\rangle_{i=0}^3$ is an $S$-ring for which $\underline{X_i}$ is a polynomial in $\underline{X_1}$ of degree $i$. Therefore the Cayley graphs $\cay(G,X_i)$ form a metric association scheme on the point set $G$ with $\cay(G,X_1)$ being a diameter~$3$ distance regular graph which is an antipodal cover of $K_{n\lambda}$ with parameters $(n\lambda,n,\lambda)$.

	If $X^\#$ satisfies~$(2)$, then the distance graphs $\Gamma_i=\{(u,v)\in G^2\,|\, \mathsf{dist}_{\cay(G,X^\#)}(u,v)=i\}$, $i=0,\ldots,3$, form a metric association scheme on $G$ with $\Gamma_1=\cay(G,X^\#)$. It follows from $\aut(\Gamma_1)\leq\aut(\Gamma_i)$ that  $\Gamma_i=\cay(G,X_i)$ for  certain $X_i\subseteq G$. Therefore $\langle\underline{X_i}\rangle_{i=0}^3$ is an $S$-ring over $G$. By antipodality of $\Gamma_1$, the set $N=X_3\cup\{e\}$ is a subgroup of $G$ of order $n$. By distance regularity of $\Gamma_1$, we obtain $\underline{X_1}^2=|X_1|e+b_1\underline{X}_1+c_2\underline{X_2}$. Together with $c_2=\lambda$, $|X_1|=n\lambda-1$, $|X_2|=|X_1|(n-1)$, we conclude that $b_1=\lambda-2$. Therefore
$$\underline{X}^2=\underline{X_1}^2+2\underline{X_1}+e=\lambda ne+\lambda(\underline{G}-\underline{N}),$$ 
as desired.   	

Assume now that $X$ satisfies~$(1)$ and $n=\lambda$. Then $S=X^\#\cup N^\#$ is reversible. An explicit computation in the group ring $\mathbb{Z}G$ yields that
$$\underline{S}^2=(\underline{S}+2e)^2-4\underline{S}-4e=(\underline{X}+\underline{N})^2-4\underline{S}-4e=\underline{X}^2+\underline{X}\cdot\underline{N}+\underline{N}\cdot\underline{X}+\underline{N}^2-4\underline{S}-4e=$$
$$=ke+\lambda (\underline{G}-\underline{N})+2\underline{G}+n\underline{N}-4\underline{S}-4e=(n+k-2)e+(n-2)\underline{S}+(n+2)(\underline{G}^\#-\underline{S}),$$
where the fourth equality holds by semiregularity of $X$, whereas the fifth one holds by $n=\lambda$. 
\end{proof}

A PDS is said to be of \emph{Latin square type} (\emph{negative Latin square type}, resp.) if it is reversible and identity-free and has parameters $(n^2,r(n-1),n+r^2-3r,r^2-r)$ ($(n^2,r(n+1),-n+r^2+3r,r^2+r)$, resp.) for some positive integers $n$ and $r$. From~\cite[Theorem~1]{DM} it follows that every nontrivial basic set of an amorphic $S$-ring of rank at least~$4$ is a PDS of Latin square or negative Latin square type. We say that an amorphic $S$-ring $\mathcal{A}$ is a \emph{Latin square type} if every nontrivial basic set of $\mathcal{A}$ is a PDS of Latin square type. 

\subsection{Product of RDSs}

The product construction for RDSs was first introduced in~\cite[Theorem~2.1]{Dav} and was generalized later in~\cite[Result~2.4]{Schmidt}. The most general concept was proposed in~\cite[Lemma~3.1]{Hir}. In this subsection, we generalize the product proposed in~\cite{Hir} and provide an example which was not covered by the above product.

First, let us call $X\subseteq G$ \emph{inversely commuting} (\emph{i-commuting}, for short) if $\underline{X}\cdot \underline{X}^{(-1)}=\underline{X}^{(-1)}\cdot \underline{X}$.

\begin{lemm}\label{icommut}
Let $X\subseteq G$ be an RDS with parameters $(m,n,k,\lambda)$ and forbidden subgroup $N$. Then $X$ is i-commuting if and only if $\underline{X}\cdot\underline{N}=\underline{N}\cdot\underline{X}$.
\end{lemm}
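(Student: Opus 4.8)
The plan is to exploit the defining identity $\underline{X}\cdot\underline{X}^{(-1)}=ke+\lambda(\underline{G}-\underline{N})$ together with its mirror image obtained by applying the anti-automorphism $x\mapsto x^{(-1)}$ to $\mathbb{Z}G$. First I would record that applying the inversion anti-automorphism to the RDS identity gives $\underline{X}^{(-1)}\cdot\underline{X}=ke+\lambda(\underline{G}-\underline{N}^{(-1)})=ke+\lambda(\underline{G}-\underline{N})$, where the last equality uses that $N$ is a subgroup, hence $\underline{N}^{(-1)}=\underline{N}$. Comparing the two displays, we see at once that
\[
\underline{X}\cdot\underline{X}^{(-1)}=\underline{X}^{(-1)}\cdot\underline{X},
\]
i.e.\ \emph{every} RDS is automatically i-commuting. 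Wait — this suggests the lemma as I have read it collapses, so the content must really be the equivalence with $\underline{X}\cdot\underline{N}=\underline{N}\cdot\underline{X}$ being \emph{always} true too, or (more likely) the intended hypothesis is slightly weaker than a genuine two-sided RDS identity; in the write-up I would state the computation carefully and let the identity $\underline{X}\cdot\underline{X}^{(-1)}=ke+\lambda(\underline{G}-\underline{N})$ be the only assumed relation, deriving both sides from it.

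Granting that framing, the real work is the forward and backward passage between the i-commuting condition and $\underline{X}\,\underline{N}=\underline{N}\,\underline{X}$. For the direction assuming $\underline{X}\cdot\underline{N}=\underline{N}\cdot\underline{X}$: multiply the RDS identity on a suitable side by $\underline{N}$. Using $\underline{N}\cdot\underline{N}=n\underline{N}$, $\underline{N}\cdot\underline{G}=\underline{G}\cdot\underline{N}=n\underline{G}$, and the hypothesis to move $\underline{N}$ past $\underline{X}$, one should be able to show that the left ideal and right ideal generated by the two products coincide, and then cancel. Concretely, I expect the clean route is: from $\underline{X}\cdot\underline{X}^{(-1)}=ke+\lambda\underline{G}-\lambda\underline{N}$ and the analogous expression for $\underline{X}^{(-1)}\cdot\underline{X}$ (obtained by inversion), their difference is $\lambda(\underline{N}'-\underline{N})$ for the appropriate translate, and commuting with $\underline N$ forces this difference to vanish. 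For the converse, assume $X$ is i-commuting; then the two expressions for $\underline{X}\cdot\underline{X}^{(-1)}$ and $\underline{X}^{(-1)}\cdot\underline{X}$ are equal, and I would multiply the i-commuting identity by $\underline{X}$ (or $\underline X^{(-1)}$) on the outside and use semiregularity-type relations such as $\underline{X}\cdot\underline{X}^{(-1)}\cdot\underline{X}$ computed two ways; matching the $\underline{N}$-components yields $\underline{X}\cdot\underline{N}=\underline{N}\cdot\underline{X}$ after cancelling $k$ (which is nonzero) from the free part.

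The step I expect to be the genuine obstacle is the cancellation: $\mathbb{Z}G$ has zero divisors, so from an equality of the form $A\cdot\underline{X}=B\cdot\underline{X}$ one cannot simply cancel $\underline{X}$. The fix is to use the RDS identity itself as a kind of partial inverse — multiplying by $\underline{X}^{(-1)}$ on the other side and peeling off the $ke$ term — or to pass to a faithful representation (e.g.\ the regular representation, or characters) where the RDS condition says $\underline X$ has prescribed singular values, so that $\underline X$ has a well-defined pseudo-inverse on the relevant subspaces. I would carry this out at the level of the group algebra $\mathbb{C}G$: decompose into isotypic components, observe that on each component where $\underline{X}$ acts invertibly the claimed commutation is immediate, and on the remaining components (where $\underline{X}$ is annihilated) both $\underline{X}\cdot\underline{N}$ and $\underline{N}\cdot\underline{X}$ are controlled by $\underline{N}$ alone. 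Assembling these component-wise equalities gives the identity in $\mathbb{Z}G$, completing both directions.
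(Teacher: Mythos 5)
Your opening step is wrong, and the error propagates through the whole plan. The inversion map $\iota:\sum x_g g\mapsto\sum x_g g^{-1}$ is an \emph{anti}-automorphism, so applying it to $\underline{X}\cdot\underline{X}^{(-1)}$ gives $\iota(\underline{X}^{(-1)})\cdot\iota(\underline{X})=\underline{X}\cdot\underline{X}^{(-1)}$ again, not $\underline{X}^{(-1)}\cdot\underline{X}$: the RDS identity is simply invariant under $\iota$ and yields no information about the reversed product. Consequently your conclusion that every RDS is automatically i-commuting is false (the paper notes that nonsymmetric RDSs exist, and those cannot be i-commuting), and the entire ``if'' direction of your plan, which rests on having ``the analogous expression for $\underline{X}^{(-1)}\cdot\underline{X}$ obtained by inversion,'' collapses --- that expression is exactly what has to be \emph{proved}. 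The paper's argument for this direction is genuinely different: from $\underline{X}\cdot\underline{N}=\underline{N}\cdot\underline{X}$ one gets $XN=NX$, so distinct elements of $X$ lie in distinct left cosets of $N$ and hence no nontrivial element of $N$ occurs in $\underline{X}^{(-1)}\cdot\underline{X}=ke+\sum_{x\in G\setminus N}\lambda_x x$; then the scalar-product identity $(\underline{X}^{(-1)}\cdot\underline{X},\underline{X}^{(-1)}\cdot\underline{X})=(\underline{X}\cdot\underline{X}^{(-1)},\underline{X}\cdot\underline{X}^{(-1)})$ gives $\sum\lambda_x^2=\lambda^2(|G|-|N|)$ while $\sum\lambda_x=\lambda(|G|-|N|)$, and the arithmetic--quadratic mean inequality forces $\lambda_x=\lambda$ for all $x$. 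Nothing in your proposal supplies this argument or any substitute for it.

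For the ``only if'' direction your instinct is workable, but the cancellation you flag as the main obstacle never arises. Associativity plus i-commuting give $(\underline{X}\cdot\underline{X}^{(-1)})\cdot\underline{X}=\underline{X}\cdot(\underline{X}\cdot\underline{X}^{(-1)})$; expanding both sides via the RDS identity, the terms $k\underline{X}+\lambda k\underline{G}$ cancel and one is left directly with $\lambda\,\underline{N}\cdot\underline{X}=\lambda\,\underline{X}\cdot\underline{N}$, whence the claim since $\lambda>0$. The proposed detour through $\mathbb{C}G$, isotypic components and pseudo-inverses is both unnecessary and underspecified: $\underline{N}$ is not central, so it does not act componentwise as a scalar, and ``where $\underline{X}$ acts invertibly'' is not a meaningful dichotomy on the matrix blocks of a nonabelian group algebra.
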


\begin{proof}
Firstly,	let us prove the ``only if" part. On the one hand, we have
$$\underline{X}\cdot \underline{X}^{(-1)}\cdot \underline{X}=(ke+\lambda(\underline{G}-\underline{N}))\cdot \underline{X}=k\underline{X}+\lambda k \underline{G}-\underline{N}\cdot \underline{X}.$$
On the other hand, since $X$ is i-commuting,
$$\underline{X}\cdot \underline{X}^{(-1)}\cdot \underline{X}=\underline{X}\cdot(ke+\lambda(\underline{G}-\underline{N}))=k\underline{X}+\lambda k \underline{G}-\underline{X}\cdot\underline{N}.$$
Two above equalities imply that $\underline{X}\cdot\underline{N}=\underline{N}\cdot\underline{X}$ as required.	
	
Now let us prove the "if" part. It follows from  $\underline{N}\cdot\underline{X}=\underline{NX}$ that $NX$ is a union of $|X|$ disjoint right $N$-cosets. Now the assumption $\underline{X}\cdot\underline{N}=\underline{N}\cdot\underline{X}$ yields us that $XN=NX$. Therefore $x_1N\cap x_2N=\varnothing$ whenever $x_1\neq x_2\in X$. This implies that a nontrivial element of $N$ does not appear in the product $\underline{X}^{(-1)}\cdot\underline{X}$, i.e. 
	$\underline{X}^{(-1)}\cdot\underline{X}=k e+\sum_{x\in G\setminus N} \lambda_x x$. Clearly, $\sum_{x\in G\setminus N}\lambda_x = k^2-k=\lambda(|G|-|N|)$.
	\newline
	It follows from $(\underline{X}^{(-1)}\cdot\underline{X},\underline{X}^{(-1)}\cdot\underline{X}) = (\underline{X}\cdot\underline{X}^{(-1)},\underline{X}\cdot\underline{X}^{(-1)})$ that $$k^2+\sum_{x\in G\setminus N}\lambda_x^2 = k^2+\lambda^2(|G|-|N|)\Rightarrow \frac{1}{|G|-|N|}\sum_{x\in G\setminus N}\lambda_x^2 = \lambda^2 =\left(\frac{1}{|G|-|N|}\sum_{x\in G\setminus N}\lambda_x\right)^2.$$
	\newline
	The well known inequality between arithmetic and quadratic means implies
	that $\lambda_x=\lambda$ for all $x\in G\setminus N$. Finally, $\underline{X}^{(-1)} \underline{X} = k e+\lambda(\underline{G}-\underline{N})= \underline{X}\cdot\underline{X}^{(-1)}$.
\end{proof}

Notice that an i-commuting RDS $X$ is always symmetric, but we do not know the cases of symmetric non-i-commuting RDS. An RDS $X$ is always i-commuting in any of the following cases: $N\trianglelefteq G$; $X$ is a union of conjugacy classes of $G$; $X=X^{(-1)}$.

The statement below generalizes~\cite[Lemma 3.1]{Hir}. Our proof follows the one given in~\cite{Hir}.

\begin{prop}\label{product0}
Let $G=G_1G_2$ be a product of its proper subgroups $G_1$ and $G_2$ (not necessarily normal in $G$) and $N=G_1\cap G_2$. Suppose that $X_1$ and $X_2$ are semiregular RDSs in $G_1$ and $G_2$, respectively, with the same forbidden subgroup $N$ and parameters $(n\lambda_1,n,n\lambda_1,\lambda_1)$ and $(n\lambda_2,n,n\lambda_2,\lambda_2)$, respectively. If $X_1$ is i-commuting, then $X_1X_2$ is a semiregular RDS with forbidden subgroup $N$ and parameters $(n^2\lambda_1\lambda_2,n,n^2\lambda_1\lambda_2,n\lambda_1\lambda_2)$. Moreover, $X_1X_2$ is symmetric if and only if $X_2$ so is.
\end{prop}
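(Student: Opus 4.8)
The plan is to do all computations inside the group ring $\mathbb{Z}G$, reducing the statement to a handful of ``collapsing'' identities and then to analysing the two products $\underline{X_1}\cdot\underline{X_2}$ and $\underline{X_2}^{(-1)}\cdot\underline{X_1}^{(-1)}$. First I would set up the building blocks. Since $G=G_1G_2$ with $N=G_1\cap G_2$, every element of $G$ has exactly $|N|=n$ factorizations $g_1g_2$ ($g_i\in G_i$), so $\underline{G_1}\cdot\underline{G_2}=n\,\underline{G}$; together with $|G_i|=n^2\lambda_i$ this gives $|G:N|=n^2\lambda_1\lambda_2=|X_1|\,|X_2|$. Semiregularity of $X_i$ means $X_i$ is a right transversal of $N$ in $G_i$, so $\underline{N}\cdot\underline{X_i}=\underline{G_i}$. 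The only point where the hypothesis enters is this: by Lemma~\ref{icommut}, the i-commuting set $X_1$ satisfies $\underline{X_1}\cdot\underline{N}=\underline{N}\cdot\underline{X_1}=\underline{G_1}$, i.e. $X_1$ is a \emph{two-sided} transversal of $N$ in $G_1$. Feeding these into the absorption rule $\underline{H}\cdot\underline{Y}=|Y|\,\underline{H}$ for $Y\subseteq H\leq G$ (and its left version) yields $\underline{G_1}\cdot\underline{X_2}=\underline{G}$ and $\underline{X_1}\cdot\underline{G_2}=\underline{X_1}\cdot\underline{N}\cdot\underline{X_2}=\underline{G_1}\cdot\underline{X_2}=\underline{G}$.

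Next I would show that $X_1X_2$ is a semiregular set and that $\underline{X_1X_2}=\underline{X_1}\cdot\underline{X_2}$. Multiplying on the left by $\underline{N}$ gives $\underline{N}\cdot(\underline{X_1}\cdot\underline{X_2})=\underline{G_1}\cdot\underline{X_2}=\underline{G}$; since $\underline{X_1}\cdot\underline{X_2}$ has nonnegative integer coefficients, this forces the coefficients inside every right coset $Nh$ to sum to $1$, so $\underline{X_1}\cdot\underline{X_2}$ is a $0$-$1$ vector whose support meets each right coset of $N$ exactly once. Its support is the set product $X_1X_2$, which is therefore a right transversal of $N$ in $G$ of size $n^2\lambda_1\lambda_2=|G:N|$, and $\underline{X_1X_2}=\underline{X_1}\cdot\underline{X_2}=:\underline{Y}$.

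To obtain the RDS equation I would expand $\underline{Y}\cdot\underline{Y}^{(-1)}=\underline{X_1}\cdot\bigl(\underline{X_2}\cdot\underline{X_2}^{(-1)}\bigr)\cdot\underline{X_1}^{(-1)}$, substitute $\underline{X_2}\cdot\underline{X_2}^{(-1)}=n\lambda_2e+\lambda_2(\underline{G_2}-\underline{N})$, and collapse the three resulting summands using the building blocks: the $e$-part becomes $n\lambda_2\,\underline{X_1}\cdot\underline{X_1}^{(-1)}=n^2\lambda_1\lambda_2e+n\lambda_1\lambda_2(\underline{G_1}-\underline{N})$; the $\underline{G_2}$-part becomes $\lambda_2\,(\underline{X_1}\cdot\underline{G_2})\cdot\underline{X_1}^{(-1)}=\lambda_2\,\underline{G}\cdot\underline{X_1}^{(-1)}=n\lambda_1\lambda_2\,\underline{G}$; and the $\underline{N}$-part becomes $\lambda_2\,(\underline{X_1}\cdot\underline{N})\cdot\underline{X_1}^{(-1)}=\lambda_2\,\underline{G_1}\cdot\underline{X_1}^{(-1)}=n\lambda_1\lambda_2\,\underline{G_1}$. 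Adding these with the correct signs, the $\underline{G_1}$-terms cancel and one is left with $\underline{Y}\cdot\underline{Y}^{(-1)}=n^2\lambda_1\lambda_2e+n\lambda_1\lambda_2(\underline{G}-\underline{N})$; combined with the previous paragraph this shows $Y=X_1X_2$ is a semiregular RDS with forbidden subgroup $N$ and parameters $(n^2\lambda_1\lambda_2,n,n^2\lambda_1\lambda_2,n\lambda_1\lambda_2)$.

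For the ``moreover'' part, note $\underline{Y}^{(-1)}=\underline{X_2}^{(-1)}\cdot\underline{X_1}^{(-1)}$. Using i-commuting of $X_1$ to write $\underline{X_1}^{(-1)}\cdot\underline{X_1}=\underline{X_1}\cdot\underline{X_1}^{(-1)}=n\lambda_1e+\lambda_1(\underline{G_1}-\underline{N})$ and collapsing as before (now with $\underline{X_2}^{(-1)}\cdot\underline{G_1}\cdot\underline{X_2}=\underline{X_2}^{(-1)}\cdot\underline{G}=n\lambda_2\,\underline{G}$ and $\underline{X_2}^{(-1)}\cdot\underline{N}\cdot\underline{X_2}=\underline{X_2}^{(-1)}\cdot\underline{G_2}=n\lambda_2\,\underline{G_2}$), I get
\[
\underline{Y}^{(-1)}\cdot\underline{Y}=n\lambda_1\,\underline{X_2}^{(-1)}\cdot\underline{X_2}+n\lambda_1\lambda_2(\underline{G}-\underline{G_2}).
\]
Now $Y$ is symmetric iff $\underline{Y}^{(-1)}\cdot\underline{Y}=n^2\lambda_1\lambda_2e+n\lambda_1\lambda_2(\underline{G}-\underline{N'})$ for some subgroup $N'\leq G$ with $|N'|=n$; comparing with the display, this holds iff $\underline{X_2}^{(-1)}\cdot\underline{X_2}=n\lambda_2e+\lambda_2(\underline{G_2}-\underline{N'})$. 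The left-hand side is supported on $G_2$, so a negative coefficient would appear unless $N'\subseteq G_2$, forcing $N'\leq G_2$; and then this equation is exactly the statement that $X_2^{(-1)}$ is an RDS in $G_2$ with forbidden subgroup $N'$ and the same parameters as $X_2$, i.e. that $X_2$ is symmetric. This settles both implications. The main obstacle throughout is the asymmetry of the hypotheses: because $X_2$ is only semiregular and not assumed i-commuting, identities such as $\underline{X_2}\cdot\underline{N}=\underline{G_2}$ or $\underline{X_2}\cdot\underline{G_1}=\underline{G}$ fail, so every collapsing step must be arranged so that $\underline{N}$ and $\underline{G_1}$ meet only $X_1$ or $X_1^{(-1)}$ — which is precisely what Lemma~\ref{icommut} supplies — with the secondary subtlety being the support/positivity argument in the last step that pins $N'$ inside $G_2$.
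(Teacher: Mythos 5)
Your proof is correct and follows essentially the same route as the paper's: the same expansion of $\underline{X_1}\cdot\underline{X_2}\cdot\underline{X_2}^{(-1)}\cdot\underline{X_1}^{(-1)}$ (and of the reversed product for the symmetry claim), collapsed via Lemma~\ref{icommut} and the absorption identities $\underline{X_1}\cdot\underline{N}=\underline{N}\cdot\underline{X_1}=\underline{G_1}$, $\underline{X_1}\cdot\underline{G_2}=\underline{G_1}\cdot\underline{X_2}=\underline{G}$. The only deviations are harmless refinements of sub-steps: you get $|X_1X_2|=|X_1||X_2|$ and the transversal property from $\underline{N}\cdot\underline{X_1}\cdot\underline{X_2}=\underline{G}$ plus nonnegativity of coefficients, where the paper argues uniqueness of the factorization $x_1x_2$ directly, and you make explicit the positivity argument forcing the forbidden subgroup $N'$ of $X_1X_2$ (or of $X_2^{(-1)}$) to lie in $G_2$, which the paper leaves implicit in its final sentence.
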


\begin{proof}
Let $k_i=|X_i|=n\lambda_i$, $i\in \{1,2\}$. If $x_1 x_2 = x^\prime_1 x^\prime_2$ for some $x_i,x^\prime_i\in X_i$, then $\{e\} \cup (G_2\setminus N)\ni x^\prime_2x_2^{-1}=(x^\prime_1)^{-1} x_1 \in G_1$, implying $x_1=x_1^\prime,x_2=x^\prime_2$. Thus $X_1X_2$ has cardinality $|X_1||X_2|=k_1k_2=n^2\lambda_1\lambda_2$ and 
$\underline{X_1X_2}=\underline{X_1}\cdot\underline{X_2}$.
\newline
\indent
Now $$\underline{X_1X_2}\cdot \underline{(X_1X_2)}^{(-1)} =
\underline{X_1}\cdot\underline{X_2}\cdot \underline{X_2}^{(-1)}\cdot \underline{X_1}^{(-1)} = \underline{X_1} \left(k_2 e + \lambda_2 (\underline{G_2} - \underline{N})\right) \underline{X_1}^{(-1)}.$$
By Lemma~\ref{icommut} $\underline{X_1} \cdot\underline{N} =\underline{N}\cdot\underline{X_1}$. Therefore
$$\underline{X_1X_2}\cdot \underline{(X_1X_2)}^{(-1)} = k_2 \underline{X_1}\cdot\underline{X_1}^{(-1)} + 
\lambda_2\underline{X_1}\cdot\underline{G_2}\cdot
\underline{X_1}^{(-1)} -\lambda_2 \underline{N}\cdot\underline{X_1}\cdot\underline{X_1}^{(-1)} = $$
$$
=k_2(k_1 e + \lambda_1(\underline{G_1}-\underline{N})) + \lambda_2\underline{X_1}\cdot\underline{G_2}\cdot
\underline{X_1}^{(-1)} - \lambda_2 \underline{N}(k_1 e + \lambda_1(\underline{G_1} - \underline{N}))
= 
$$
$$
=k_1 k_2 e +(k_2\lambda_1 - \lambda_1\lambda_2 n )\underline{G_1} +(-k_1\lambda_2-k_2\lambda_1 + \lambda_1\lambda_2 n)\underline{N} + \lambda_2\underline{X_1}\cdot\underline{G_2}\cdot \underline{X_1}^{(-1)}.$$
Taking into account that $n\lambda_i=k_i$ we conclude that
$$
\underline{X_1X_2}\cdot \underline{(X_1X_2)}^{(-1)} =
k_1 k_2 e - \lambda_1\lambda_2 n\underline{N} + \lambda_2\underline{X_1}\cdot\underline{G_2}\cdot
\underline{X_1}^{(-1)}.
$$
Using the equalities
$\underline{X_1}\cdot\underline{G_2}=\frac{1}{n}\left(\underline{X_1}\cdot\underline{N}\cdot\underline{G_2}\right)=\frac{1}{n}\left(\underline{G_1}\cdot\underline{G_2}\right)=\underline{G_1G_2}$,
we finally obtain
$$
\underline{X_1X_2}\cdot \underline{(X_1X_2)}^{(-1)} =
k_1 k_2 e - \lambda_1\lambda_2 n\underline{N} + \lambda_2k_1\underline{G_1G_2}=k_1k_2 e + \lambda_1\lambda_2 n (\underline{G} - \underline{N}).
$$
\newline\indent
Let us compute the product $\underline{(X_1X_2)}^{(-1)} \cdot \underline{X_1X_2}$:
$$ \underline{(X_1X_2)}^{(-1)} \cdot \underline{X_1X_2} =
\underline{X_2}^{(-1)}\cdot \underline{X_1}^{(-1)}\cdot\underline{X_1}\cdot\underline{X_2}\cdot =\underline{X_2}^{(-1)} \cdot\left(k_1 e + \lambda_1 (\underline{G_1} - \underline{N})\right)\cdot \underline{X_2}=$$
$$
=k_1\underline{X_2}^{(-1)}\cdot\underline{X_2} + 
\lambda_1\underline{X_2}^{(-1)}\cdot\underline{G_1}\cdot\underline{X_2} - \lambda_1\underline{X_2}^{(-1)}\cdot\underline{G_2}=$$ 
$$=k_1\underline{X_2}^{(-1)}\cdot\underline{X_2} +  \lambda_1\underline{X_2}^{(-1)}\cdot\underline{G_1}\cdot\underline{X_2}-\lambda_1 k_2\underline{G_2}. 
$$
Taking into account the equalities
$\underline{G_1}\cdot\underline{X_2}=\frac{1}{n}\left(\underline{G_1}\cdot\underline{N}\cdot\underline{X_2}\right)=
\frac{1}{n}\left(\underline{G_1}\cdot\underline{G_2}\right)=\underline{G_1G_2}$,
we obtain 
$$
\underline{(X_1X_2)}^{(-1)} \cdot \underline{X_1X_2} = 
k_1\underline{X_2}^{(-1)}\cdot\underline{X_2} +  \lambda_1k_2\underline{G}-\lambda_1 k_2\underline{G_2}.
$$
This implies that $(X_1X_2)^{(-1)}$ is an RDS if and only if $(X_2)^{(-1)}$ is an $(n\lambda_2,n,n\lambda_2,\lambda_2)$-RDS. 
\end{proof}

Given a positive integer~$n$, a cyclic group of order~$n$ and a symmetric group of degree~$n$ are denoted by $C_n$ and $S_n$, respectively.

{\bf An example.} Consider the group $G = (C_9\rtimes C_6)\times S_3$ where the product $(C_9\rtimes C_6)$ is the holomorph of $C_9$. Denote by $A,B,C$ the natural subgroups of $G$ isomorphic to $C_9$, $C_6$, and $S_3$, respectively. Let $b\in B,c\in C$ be elements of order $3$. According to~\cite[Example~4.3]{Hir} there exists a $(12,3,12,4)$-RDS $X_2\subseteq G_2=BC$ with forbidden subgroup $N=\langle bc\rangle\cong C_3$. The subgroup $AN\cong C_9\rtimes C_3$ is extraspecial of exponent $9$. According to Proposition~\ref{p24}, the group $G_1=AN$ contains a reversible $(9,3,9,3)$-RDS $X_1\subseteq G_1$ with forbidden subgroup $N$. So $X_1$ is i-commuting and the assumptions of Proposition~\ref{product0} are satisfied. Therefore $X_1X_2$ is a $(108,3,108,36)$-RDS over the group $G=G_1 G_2$. It is not symmetric, because $X_2$ is not. It is not covered by the construction from~\cite{Hir}, since $N$ is not normal neither in $G_1$ nor $G_2$.

\section{Linked systems of RDSs}\label{linked}

\subsection{Definitions}

Let $N\leq G$ and $\mathcal{L}=\{X_{\alpha}:~\alpha\in S\}$ a collection of RDSs in $G$ with the same forbidden subgroup~$N$ and parameters~$(m,n,k,\lambda)$ indexed by the elements of a finite set $S$ with $|S|=s\geq 2$. The collection $\mathcal{L}$ is called a \emph{closed linked system} of RDSs relative to $N$  (see \cite[Definition 1.4]{DPS1} and the sentence thereafter\footnote{Although our definition of closedness is formally different form the one given in~\cite{DPS1}, it is equivalent to the original.}) if there exist a bijection $\chi=\chi_{\mathcal{L}}:S \rightarrow S$, a function $\psi=\psi_{\mathcal{L}}:(S\times S)\setminus D\rightarrow S$, where $D=\{(\alpha,\chi(\alpha))\in S\times S\}$, and nonnegative integers $\mu$ and $\nu$ such that $X_{\alpha}^{(-1)}=X_{\chi(\alpha)}$ for every $\alpha\in S$ and
\begin{equation}\label{linked}
\underline{X_\alpha}\cdot\underline{X_\beta}=
\begin{cases}
ke+\lambda(\underline{G\setminus N}),~\beta=\chi(\alpha),\\
\mu \underline{X_{\psi(\alpha,\beta)}}+\nu (\underline{G\setminus X_{\psi(\alpha,\beta)}}),~\beta\neq \chi(\alpha).
\end{cases}
\end{equation}
It is easy to see that $X_{\chi(\alpha)}^{(-1)}=X_{\alpha}$ and hence $\chi(\chi(\alpha))=\alpha$ for every $\alpha\in S$. We say that $(m,n,k,\lambda,s,\mu,\nu)$ are \emph{parameters} and $(\chi,\psi)$ a \emph{pair of characteristic functions} of $\mathcal{L}$. 
From~\cite[Proposition~3.5]{KhS} it follows that

$$\mu=\frac{1}{mn}\left(k^2\pm (mn-k)\sqrt{\frac{k(mn-k)}{m(n-1)}}\right)~\text{and}~\nu=\frac{k}{mn}\left(k\mp \sqrt{\frac{k(mn-k)}{m(n-1)}}\right).$$

If the RDSs $X_\alpha$ are semiregular, then $k=m$ and the above formula reads as follows:
\begin{equation}\label{munu-reg}
\mu=\frac{1}{n}\left(m\pm (n-1)\sqrt{m}\right)~\text{and}~\nu=\frac{1}{n}\left(m\mp \sqrt{m}\right).
\end{equation}
So, in this case there exists a positive integer $\ell$ with $m=\ell^2$ and at least one of the numbers $\ell^2+\ell,\ell^2-\ell$ is divisible by $n$.

Notice that the definition of a closed linked system given in~\cite{DPS1} does not contain characteristic functions defined above. 

In the present paper, we consider only closed linked systems. In general, it is possible to consider linked systems which are not necessarily closed. In this case every subset of a linked system is also a linked system. However, a subset of a closed linked system is not necessarily a closed linked system.



\subsection{Group structure of a closed linked system}

In this subsection, we show that there exists a group structure behind a closed linked system of semiregular RDSs. More precisely, we have the following
\begin{prop}\label{function}
Let $G$ be a group and $\mathcal{L}=\{X_\alpha:~\alpha \in S\}$ a closed linked system of semiregular RDSs in $G$ with a forbidden subgroup $N$ and a pair of
characteristic functions~$(\chi,\psi)$. Then the set $S^\infty=S\cup \{\infty\}$ equipped with a unary operation $\widehat{\chi}$ and a binary operation $\widehat{\psi}$ defined as follows
$$\widehat{\chi}(\alpha)=
\begin{cases}
\chi(\alpha),~\alpha\neq \infty,\\
\infty,~\alpha=\infty,\\
\end{cases}$$ 
$$\widehat{\psi}(\alpha,\beta)=
\begin{cases}
\psi(\alpha,\beta),~\beta\neq \chi(\alpha),~\alpha,\beta\neq \infty,\\
\infty,~\beta=\widehat{\chi}(\alpha),\\
\alpha,~\alpha\neq\infty,~\beta=\infty,\\
\beta,~\alpha=\infty,~\beta\neq \infty,
\end{cases}$$ 
is a group, where $\infty$ is the identity element and $\widehat{\chi}(\alpha)$ is the element inverse to $\alpha$ for each $\alpha\in S^{\infty}$. If in addition, $\psi$ is commutative, then $S^\infty$ is abelian. 
\end{prop}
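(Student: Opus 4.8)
The plan is to verify the group axioms for $(S^\infty,\widehat\psi)$ by translating each axiom into an identity in $\mathbb ZG$ obtained from the defining relations~\eqref{linked} of the closed linked system. The key observation is that each $\underline{X_\alpha}$ acts on $\mathbb ZG$, and because the RDSs are semiregular we have $\underline{N}\cdot\underline{X_\alpha}=\underline{G}$, so the ``$\nu(\underline{G\setminus X_{\psi(\alpha,\beta)}})$'' term is controlled: multiplying \eqref{linked} on the left by $\underline N$ gives $\underline N\cdot\underline{X_\alpha}\cdot\underline{X_\beta}=k\underline G$ in the non-forbidden case, while in the forbidden case $\beta=\chi(\alpha)$ it gives $k\underline N+\lambda(\,|G|\underline G/|N|-\underline G\,)$, a multiple of $\underline N$ plus a multiple of $\underline G$; this is how $\infty$ enters the picture. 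More useful is that the element $\underline{G\setminus X_\gamma}=\underline G-\underline{X_\gamma}$, so \eqref{linked} in the non-degenerate case reads $\underline{X_\alpha}\cdot\underline{X_\beta}=(\mu-\nu)\underline{X_{\psi(\alpha,\beta)}}+\nu\underline G$, and in the degenerate case $\underline{X_\alpha}\cdot\underline{X_{\chi(\alpha)}}=ke+\lambda\underline G-\lambda\underline N$. Extending the index set by $\infty$ with $\underline{X_\infty}:=\underline N$ and noting $\underline N\cdot\underline N=n\underline N$, $\underline N\cdot\underline{X_\alpha}=\underline{X_\alpha}\cdot\underline N=\underline G$ (the latter needs $N\trianglelefteq G$ — see below), one packages all cases of the product $\underline{X_\alpha}\cdot\underline{X_\beta}$ into a single uniform shape $a_{\alpha\beta}\underline{X_{\widehat\psi(\alpha,\beta)}}+b_{\alpha\beta}\underline G$ with $a_{\alpha\beta}\neq 0$, valid for all $\alpha,\beta\in S^\infty$ once we also set $\underline{X_\alpha}\cdot\underline{X_{\chi(\alpha)}}$ aside as the ``$ke$'' case identified with $\widehat\psi(\alpha,\chi(\alpha))=\infty$.

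First I would establish the bookkeeping: the $\underline{X_\gamma}$ for $\gamma\in S^\infty$ together with $\underline G$ are linearly independent in $\mathbb QG$ (the $X_\gamma$ are distinct right transversals of $N$, hence distinct subsets, so their indicator vectors are independent of each other and of the all-ones vector), so an identity of the form $p\underline{X_\gamma}+q\underline G=p'\underline{X_{\gamma'}}+q'\underline G$ with $p,p'\neq0$ forces $\gamma=\gamma'$. Second, I would prove that $\widehat\psi$ is well-defined as a binary operation into $S^\infty$ (that $\psi$ is only defined off the diagonal $D$ is exactly why $\infty$ is adjoined). Third, associativity: for $\alpha,\beta,\gamma\in S^\infty$ compute $(\underline{X_\alpha}\cdot\underline{X_\beta})\cdot\underline{X_\gamma}$ and $\underline{X_\alpha}\cdot(\underline{X_\beta}\cdot\underline{X_\gamma})$ using the uniform product shape; both equal $(\text{nonzero scalar})\,\underline{X_{\widehat\psi(\widehat\psi(\alpha,\beta),\gamma)}}+(\cdots)\underline G$ and $(\text{nonzero scalar})\,\underline{X_{\widehat\psi(\alpha,\widehat\psi(\beta,\gamma))}}+(\cdots)\underline G$ respectively, and by the independence in Step~1 the two index elements coincide — this gives $\widehat\psi(\widehat\psi(\alpha,\beta),\gamma)=\widehat\psi(\alpha,\widehat\psi(\beta,\gamma))$. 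Care is needed when some product along the way lands in the degenerate case producing a $ke$ term: there $\widehat\psi$ returns $\infty$ and one has to check the bracketed scalars stay nonzero and track the $\underline G$ and $\underline N$ pieces, using $\underline{X_\delta}\cdot\underline N=\underline N\cdot\underline{X_\delta}=\underline G$ and $\underline{X_\delta}\cdot\underline G=\underline G\cdot\underline{X_\delta}=k\underline G$. Fourth, the identity and inverse laws are immediate from the defining cases of $\widehat\psi$: $\widehat\psi(\infty,\beta)=\beta$, $\widehat\psi(\alpha,\infty)=\alpha$, and $\widehat\psi(\alpha,\widehat\chi(\alpha))=\infty=\widehat\psi(\widehat\chi(\alpha),\alpha)$, the last symmetry coming from $X_{\chi(\alpha)}^{(-1)}=X_\alpha$ so that $\underline{X_{\chi(\alpha)}}\cdot\underline{X_\alpha}=(\underline{X_\alpha}\cdot\underline{X_{\chi(\alpha)}})^{(-1)}$ is again of the degenerate form. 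Finally, if $\psi$ is commutative then $\widehat\psi$ is commutative on all of $S^\infty$ (the extra cases with $\infty$ are visibly symmetric), giving the abelian conclusion.

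The main obstacle is associativity, specifically handling the interaction between the two regimes of \eqref{linked} — the ``diagonal'' ($ke+\lambda(\underline G-\underline N)$) case versus the generic case — when they occur in a triple product. One must confirm that whenever $\widehat\psi(\alpha,\beta)=\widehat\chi(\gamma)$ (so $(\underline{X_\alpha}\underline{X_\beta})\underline{X_\gamma}$ produces a $ke$), the other association $\underline{X_\alpha}(\underline{X_\beta}\underline{X_\gamma})$ also produces a $ke$, i.e.\ $\widehat\psi(\beta,\gamma)=\widehat\chi(\alpha)$; this is the reflection inside $S^\infty$ of the consistency already built into a closed linked system, but it needs to be extracted carefully, presumably by comparing the coefficient of $e$ on both sides (only the degenerate case contributes a nonzero multiple of $e$) and then matching the $\underline{X}$-parts. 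A secondary subtlety: the cleanest way to get $\underline N\cdot\underline{X_\alpha}=\underline{X_\alpha}\cdot\underline N$ uses $N\trianglelefteq G$; if one does not want to assume normality, one instead works only with left multiplications by the $\underline{X_\gamma}$'s and by $\underline N$ on one fixed side, which suffices because associativity in $\mathbb ZG$ is free — so the argument should be organized to multiply everything out in a fixed order and never need to commute $\underline N$ past an $\underline{X_\alpha}$. Once that is arranged, every step reduces to comparing coefficients of the linearly independent family $\{\underline{X_\gamma}:\gamma\in S^\infty\}\cup\{\underline G\}$, which is routine.
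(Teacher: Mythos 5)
Your plan rests on the same underlying idea as the paper's proof --- associativity of $\widehat{\psi}$ is inherited from associativity of multiplication in the group algebra --- but the paper's execution removes essentially all of the obstacles you flag. The paper works in the quotient algebra $\mathfrak{A}=\mathbb{Q}G/\langle\underline{G}\rangle$ and represents $\infty$ not by $\underline{N}$ but by the image $x_\infty$ of $ke-\lambda\underline{N}$; with this choice the degenerate case of the defining relations reads $\underline{X_\alpha}\cdot\underline{X_{\chi(\alpha)}}=(ke-\lambda\underline{N})+\lambda\underline{G}$, so \emph{every} product $x_\alpha x_\beta$ with $\alpha,\beta\in S^\infty$ becomes a nonzero scalar multiple of $x_{\widehat{\psi}(\alpha,\beta)}$ in $\mathfrak{A}$ --- one checks $(ke-\lambda\underline{N})\cdot\underline{X_\alpha}=k\underline{X_\alpha}-\lambda\underline{G}$ and, using $k=\lambda n$, $(ke-\lambda\underline{N})^2=k(ke-\lambda\underline{N})$. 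Hence the set of lines $\{\langle x_\alpha\rangle:\alpha\in S^\infty\}$ is closed under multiplication, associativity is immediate, and there is no case analysis for triple products and no linear-independence bookkeeping. By contrast, your choice $\underline{X_\infty}=\underline{N}$ leaves the stray $ke$ term outside your ``uniform shape,'' which is exactly what forces the delicate two-regime analysis you describe; moreover, your proposed detector for the degenerate case --- comparing coefficients of $e$ --- is unreliable, since $e$ occurs in $\underline{G}$, in $\underline{N}$, and possibly in some $X_\delta$ (the RDSs need not avoid $e$), so the $e$-coefficient of $a\underline{X_\delta}+b\underline{G}$ need not equal $b$. Finally, your concern about non-normal $N$ is already resolved by closedness: since $X_\alpha^{(-1)}=X_{\chi(\alpha)}\in\mathcal{L}$, each $X_\alpha$ is i-commuting, and Lemma~\ref{icommut} together with semiregularity gives $\underline{N}\cdot\underline{X_\alpha}=\underline{X_\alpha}\cdot\underline{N}=\underline{G}$ on both sides, so there is no need to fix an order of multiplication. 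With these repairs your argument would go through, but it amounts to re-deriving by hand what the quotient construction gives for free.
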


\begin{proof}
At first, we notice that each $X\in\mathcal{L}$ is i-commuting, since $X^{(-1)}\in\mathcal{L}$. Therefore $\underline{N}\cdot \underline{X}=\underline{X} \cdot \underline{N}=\underline{G}$ holds for every $X\in \mathcal{L}$.

Consider the factor-algebra $\mathfrak{A}=\mathbb{Q}G/\langle\underline{G}\rangle$. Denote by $x_\alpha$, $\alpha\in S$, the image of $\underline{X}_\alpha$ in $\mathfrak{A}$ and by $x_\infty$ the image of $ke - \lambda\underline{N}$ in the same algebra. Consider the set $\mathfrak{S}=\{\langle x_\alpha\rangle\,|\,\alpha\in S^\infty\}$. It follows from~\eqref{linked} that $\langle x_\alpha\rangle \langle x_\beta \rangle = \langle x_{\widehat{\psi}(\alpha,\beta)}\rangle$ for each $\alpha,\beta\in S$. Taking into account the equalities
$$
(ke -\lambda\underline{N})\underline{X}_\alpha = k\underline{X}_\alpha -\lambda\underline{G} = \underline{X}_\alpha(ke -\lambda\underline{N}),$$ 
we obtain 
that $\langle x_\infty\rangle \langle x_\alpha \rangle = \langle x_\alpha \rangle = \langle x_{\widehat{\psi}(\infty,\alpha)}\rangle $ and 
$\langle x_\alpha \rangle \langle x_\infty\rangle= \langle x_\alpha \rangle = \langle x_{\widehat{\psi}(\alpha,\infty)}\rangle$. Combining altogether, we conclude that the set  $\mathfrak{S}$ is closed with respect to the multiplication of $\mathfrak{A}$. Since the product in $\mathfrak{A}$ is associative, its restriction on $\mathfrak{S}$ is associative too. Therefore $\widehat{\psi}$ is an associative operation on $S^\infty$ with a unit $\infty$ and an inversion $\alpha\mapsto\widehat{\chi}(\alpha)$, i.e. $S^\infty$ is a group.
\end{proof}

Given a closed linked system of semiregular RDSs $\mathcal{L}=\{X_\alpha:~\alpha\in S\}$, the group $S^\infty=S^\infty(\mathcal{L})$ from Proposition~\ref{function} is called a group \emph{associated} with $\mathcal{L}$. Clearly, two closed linked systems consisting of RDSs indexed by the elements of the same set $S$ with the same pairs of characteristic functions have the same associated groups.

\subsection{Product of linked systems}

In the proposition below, we give a construction of a product of closed linked systems of relative difference sets.

\begin{prop}\label{product}
Let $G=G_1G_2$ be a central product of its proper subgroups $G_1$ and $G_2$ and $Z=G_1\cap G_2\leq Z(G)$. Suppose that $\mathcal{L}_1=\{X_\alpha:~\alpha\in S\}$ and $\mathcal{L}_2=\{Y_\alpha:~\alpha\in S\}$, where $|S|=s\geq 2$, are closed linked systems of relative to~$Z$ semiregular difference sets in $G_1$ and $G_2$, respectively, with parameters $(n\lambda_1,n,n\lambda_1,\lambda_1,s,\mu_1,\nu_1)$ and $(n\lambda_2,n,n\lambda_2,\lambda_2,s,\mu_2,\nu_2)$, respectively, and the same pair of characteristic functions $(\chi,\psi)$. Then for every $f\in\aut(S^\infty)$, the set 
$$\mathcal{L}=\{X_\alpha Y_{f(\alpha)}:~\alpha\in S\}$$ 
is a closed linked system of relative to~$Z$ semiregular difference sets in~$G$ with parameters 
$$(n^2\lambda_1\lambda_2,n,n^2\lambda_1\lambda_2,n\lambda_1\lambda_2,s,\mu,\nu),$$ 
where
$$\mu=\mu_1\mu_2+(n-1)\nu_1\nu_2~\text{and}~\nu=\mu_1\nu_2+\mu_2\nu_1+(n-2)\nu_1\nu_2,$$ 
and $(\chi_{\mathcal{L}},\psi_{\mathcal{L}})=(\chi,\psi)$.
\end{prop}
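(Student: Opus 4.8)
Write $Z_\alpha=X_\alpha Y_{f(\alpha)}$ for $\alpha\in S$. The plan is to reduce the whole statement to the one‑pair product (Proposition~\ref{product0}) together with one explicit computation in $\mathbb{Z}G$. Since $G=G_1G_2$ is a central product, $[G_1,G_2]=1$, and $Z=G_1\cap G_2$ is normal in both $G_1$ and $G_2$; hence every $X_\alpha$ and every $Y_\alpha$ is an i-commuting semiregular RDS with forbidden subgroup $Z$, and the hypotheses of Proposition~\ref{product0} are met for the pair $G_1,G_2$ with common forbidden subgroup $Z$. Consequently each $Z_\alpha$ is a semiregular RDS in $G$ with forbidden subgroup $Z$, with $\underline{Z_\alpha}=\underline{X_\alpha}\cdot\underline{Y_{f(\alpha)}}$, parameters $(n^2\lambda_1\lambda_2,n,n^2\lambda_1\lambda_2,n\lambda_1\lambda_2)$, and $\underline{Z_\alpha}\cdot\underline{Z_\alpha}^{(-1)}=ke+\lambda(\underline G-\underline Z)$ with $k=n^2\lambda_1\lambda_2$, $\lambda=n\lambda_1\lambda_2$. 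Since $f\in\aut(S^\infty)$ fixes $\infty$, permutes $S$, and commutes with the inversion $\widehat{\chi}$, we have $f(\chi(\alpha))=\chi(f(\alpha))$, and because elements of $G_1$ commute with those of $G_2$,
$$Z_\alpha^{(-1)}=X_{\chi(\alpha)}Y_{f(\chi(\alpha))}=X_{\chi(\alpha)}Y_{f(\chi(\alpha))}=Z_{\chi(\alpha)}.$$
Thus $\chi_{\mathcal L}=\chi$, $D_{\mathcal L}=D$, and the diagonal case of~\eqref{linked} for $\mathcal L$ is exactly the RDS identity just quoted.

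For the off-diagonal case fix $\alpha,\beta\in S$ with $\beta\neq\chi(\alpha)$ and put $\gamma=\psi(\alpha,\beta)$. Since $f$ is an automorphism of $(S^\infty,\widehat{\psi})$ fixing $\infty$, it follows that $f(\beta)\neq\widehat{\chi}(f(\alpha))=\chi(f(\alpha))$ (otherwise $\beta=\chi(\alpha)$) and that $\psi(f(\alpha),f(\beta))=\widehat{\psi}(f(\alpha),f(\beta))=f(\widehat{\psi}(\alpha,\beta))=f(\gamma)$. Commuting $\underline{Y_{f(\alpha)}}$ past $\underline{X_\beta}$ and then applying~\eqref{linked} in $\mathcal L_1$ and in $\mathcal L_2$ gives
$$\underline{Z_\alpha}\cdot\underline{Z_\beta}=\underline{X_\alpha}\cdot\underline{X_\beta}\cdot\underline{Y_{f(\alpha)}}\cdot\underline{Y_{f(\beta)}}=\bigl(\mu_1\underline{X_\gamma}+\nu_1(\underline{G_1}-\underline{X_\gamma})\bigr)\bigl(\mu_2\underline{Y_{f(\gamma)}}+\nu_2(\underline{G_2}-\underline{Y_{f(\gamma)}})\bigr).$$
To evaluate the right-hand side I would first record the identities $\underline{G_1}\cdot\underline{G_2}=n\underline G$, $\underline{X_\gamma}\cdot\underline{G_2}=\underline{G_1}\cdot\underline{Y_{f(\gamma)}}=\underline G$, and $\underline{X_\gamma}\cdot\underline{Y_{f(\gamma)}}=\underline{X_\gamma Y_{f(\gamma)}}=\underline{Z_\gamma}$; each follows from semiregularity, normality of $Z$ in $G_i$, and the definition of a central product (e.g. each $g\in G$ has exactly $|Z|=n$ factorizations $g=g_1g_2$). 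Expanding the product and collecting the coefficients of $\underline{Z_\gamma}$ and of $\underline G$ then yields $\underline{Z_\alpha}\cdot\underline{Z_\beta}=\mu\underline{Z_\gamma}+\nu(\underline G-\underline{Z_\gamma})$ with $\mu=\mu_1\mu_2+(n-1)\nu_1\nu_2$ and $\nu=\mu_1\nu_2+\mu_2\nu_1+(n-2)\nu_1\nu_2$. Since $Z_\gamma=X_{\psi(\alpha,\beta)}Y_{f(\psi(\alpha,\beta))}$ is precisely the member of $\mathcal L$ indexed by $\psi(\alpha,\beta)$, this is the off-diagonal case of~\eqref{linked} for $\mathcal L$ with $\psi_{\mathcal L}=\psi$.

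Putting the two cases together shows that $\mathcal L=\{Z_\alpha:\alpha\in S\}$ satisfies~\eqref{linked} with characteristic functions $(\chi,\psi)$ and parameters $(n^2\lambda_1\lambda_2,n,n^2\lambda_1\lambda_2,n\lambda_1\lambda_2,s,\mu,\nu)$, i.e. it is a closed linked system of semiregular RDSs relative to $Z$, as claimed; one may also check $\mu,\nu$ are consistent with~\eqref{munu-reg} using $m=(n\ell_1\ell_2)^2$ when $m_i=\ell_i^2$. I expect the main obstacle not to be the arithmetic in the last display but the bookkeeping around $f$: one must carefully translate the statement "$f$ is an automorphism of $S^\infty$" into the three combinatorial facts $f\chi=\chi f$, $f\psi(\cdot,\cdot)=\psi(f(\cdot),f(\cdot))$ on the relevant domain, and "$\beta\neq\chi(\alpha)\Leftrightarrow f(\beta)\neq\chi(f(\alpha))$", which is what makes the two factors of the product simultaneously land in their respective off-diagonal branches of~\eqref{linked}. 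The remaining delicate point is keeping track of the coefficients of $\underline{G_1},\underline{G_2},\underline{G}$ via the collapsing identities above, but that is routine once they are in place.
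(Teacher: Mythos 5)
Your proposal is correct and follows essentially the same route as the paper: each member of $\mathcal L$ is handled by Proposition~\ref{product0}, the inversion is tracked via $f\chi=\chi f$, the collapsing identities $\underline{X_\gamma}\cdot\underline{G_2}=\underline{G_1}\cdot\underline{Y_{f(\gamma)}}=\underline{G}$ and $\underline{G_1}\cdot\underline{G_2}=n\underline{G}$ are established from semiregularity and centrality of $Z$, and the off-diagonal product is expanded to give exactly the stated $\mu$ and $\nu$. Your explicit isolation of the three combinatorial consequences of $f\in\aut(S^\infty)$ is a point the paper leaves implicit, but the argument is the same.
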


\begin{proof}

Each set from $\mathcal{L}$ is a relative to~$Z$ semiregular difference set with the required parameters by Proposition~\ref{product0}. One can see that 
$$(X_\alpha Y_{f(\alpha)})^{(-1)}=(X_\alpha)^{(-1)} (Y_{f(\alpha)})^{(-1)}=X_{\chi(\alpha)} Y_{\chi(f(\alpha))}=X_{\chi(\alpha)} Y_{f(\chi(\alpha))}\in \mathcal{L}$$
for all $\alpha\in S$, where the third equality holds because $f\in \aut(S^\infty)$. Next, we will verify that~\eqref{linked} holds for $\mathcal{L}$. Suppose that $X_\alpha Y_{f(\alpha)},X_\beta Y_{f(\beta)}\in \mathcal{L}$. If $\beta=\chi(\alpha)$, then we are done by Proposition~\ref{product0}. 

Suppose that $\beta\neq \chi(\alpha)$. Before we will compute the product $\underline{X_\alpha Y_{f(\alpha)}}\cdot\underline{X_\beta Y_{f(\beta)}}$, we make an auxiliary computation. Namely, we verify that
\begin{equation}\label{aux}
\underline{G_1}\cdot\underline{Y_\gamma}=\underline{Y_\gamma}\cdot\underline{G_1}=\underline{X_\gamma}\cdot\underline{G_2}=\underline{G_2}\cdot \underline{X_\gamma}=\underline{G}
\end{equation}
for every $\gamma\in S$. Since $Y_\gamma$ is semiregular, it is a transversal for $Z$ in $G_2$. So $ZY_\gamma=Y_\gamma Z=G_2$. Analogously, $ZX_\gamma=X_\gamma Z=G_1$. This implies that 
$$G_1Y_\gamma=G_1 Z Y_\gamma= G_1 G_2=G~\text{and}~X_\gamma G_2=X_\gamma Z G_2=G_1G_2=G.$$
Observe that each of the elements $\underline{G_1}\cdot\underline{Y_\gamma}$ and $\underline{X_\gamma}\cdot\underline{G_2}$ is a sum of $|G|$ elements of $G$ and hence $\underline{G_1}\cdot\underline{Y_\gamma}=\underline{X_\gamma}\cdot\underline{G_2}=\underline{G}$. The remaining equalities from~\eqref{aux} follow from the same argument and $[G_1,G_2]=\{e\}$.

Let us return to the computation of $\underline{X_\alpha Y_{f(\alpha)}}\cdot\underline{X_\beta Y_{f(\beta)}}$:
$$\underline{X_\alpha Y_{f(\alpha)}}\cdot\underline{X_\beta Y_{f(\beta)}}=\left(\underline{X_\alpha}\cdot \underline{X_\beta}\right) \cdot\left(\underline{Y_{f(\alpha)}}\cdot\underline{Y_{f(\beta)}}\right)=$$
$$=\left((\mu_1-\nu_1)\underline{X_{\psi(\alpha,\beta)}}+\nu_1\underline{G}_1\right)\cdot\left((\mu_2-\nu_2)\underline{Y_{\psi(f(\alpha),f(\beta))}}+\nu_2\underline{G}_2\right)=$$
$$=(n\nu_1\nu_2+\nu_1(\mu_2-\nu_2)+\nu_2(\mu_1-\nu_1))\underline{G}+(\mu_1-\nu_1)(\mu_2-\nu_2)\underline{X_{\psi(\alpha,\beta)}}\cdot\underline{Y_{\psi(f(\alpha),f(\beta))}}=$$
$$=(\mu_1\mu_2+(n-1)\nu_1\nu_2)\underline{X_{\psi(\alpha,\beta)}Y_{f(\psi(\alpha,\beta))}}+(\mu_1\nu_2+\mu_2\nu_1+(n-2)\nu_1\nu_2)(\underline{G}-\underline{X_{\psi(\alpha,\beta)}Y_{f(\psi(\alpha,\beta))}}),$$
where the first, third, and fourth  equalities hold by $[G_1,G_2]=\{e\}$,~\eqref{aux}, and $f\in \aut(S^\infty)$, respectively. Thus,~\eqref{linked} holds for $\mathcal{L}$, $\chi$, $\psi$, $\lambda=n\lambda_1\lambda_2$, $\mu=\mu_1\mu_2+(n-1)\nu_1\nu_2$, and $\nu=\mu_1\nu_2+\mu_2\nu_1+(n-2)\nu_1\nu_2$ as required.
\end{proof}

\section{Generalization of Davis-Polhill-Smith construction}
In this section, we generalize a construction of a linked system of RDSs given in~\cite{DPS2}. Let $n\geq 2$, $G$ a group of order~$n^2$, $t\geq 2$ a divisor of $n$, $H$ an abelian group of order~$t$, and $\mathcal{A}$ an amorphic $S$-ring of Latin square type of rank~$t+1$ over $G$ whose nontrivial basic sets $X_h$ are indexed by the elements of $H$. Suppose that $|X_h|=k_h(n-1)$, where $k_e=\frac{n}{t}+1$ and $k_h=\frac{n}{t}$ for $h\in H^\#$. Then by~\cite[Corollary~1]{DM}, we have
\begin{equation}\label{amorph}
\underline{X_h}\cdot \underline{X_{h^\prime}}=
\begin{cases}
k_h(n-1)e+(n-2k_h)\underline{X_h}+k_h(k_h-1)\underline{G}^\#,~h^\prime=h,\\
k_hk_{h^\prime}\underline{G}^\#-k_{h^\prime} \underline{X_h}-k_h\underline{X_{h^\prime}},~h^\prime\neq h
\end{cases}
\end{equation}
for all $h,h^\prime\in H$.

Due to~\cite{GIK}, for every prime power $n$, there exists an amorphic $S$-ring of Latin square type and rank~$n+2$ over an elementary abelian group $G$ of order~$n^2$. The $S$-ring described above has $n+1$ nontrivial basic sets $S_1,...,S_{n+1}$ of equal cardinality $n-1$. Let $t$ be a divisor of $n$ and $H$ an abelian group of order $t$. Take an arbitrary partition $\{P_h\}_{h\in H}$ of $\{1,...,n+1\}$ into $t$ classes labelled by the elements of $H$  such that $|P_{e_H}|=\frac{n}{t} + 1$ and $|P_h|=\frac{n}{t}, h\neq e_H$. Then the sets $\{e_G\}$, $X_h=\bigcup_{i\in P_h} S_i$ form an amorphic $S$-ring over satisfying~\eqref{amorph}. Thus, we obtain the next statement.

\begin{lemm}\label{amorphq}
For every prime power $n$ and a divisor $t$ of $n$, there exists an amorphic $S$-ring of Latin square type and rank~$t+1$ satisfying~\eqref{amorph}  over an elementary abelian group of order~$n^2$.
\end{lemm}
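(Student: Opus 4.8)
The plan is to verify that the explicit construction sketched just before the statement actually produces an amorphic $S$-ring satisfying \eqref{amorph}, reducing everything to two known inputs: the existence of a rank-$(n+2)$ amorphic $S$-ring of Latin square type over an elementary abelian group of order $n^2$ (due to \cite{GIK}), and the fusion criterion \cite[Theorem~1]{DM} together with its corollary \cite[Corollary~1]{DM}. So first I would fix a prime power $n$ and invoke \cite{GIK} to obtain the elementary abelian group $G$ of order $n^2$ together with an amorphic Latin square type $S$-ring $\mathcal{A}_0$ of rank $n+2$, whose $n+1$ nontrivial basic sets $S_1,\dots,S_{n+1}$ each have cardinality $n-1$ (this equicardinality is exactly what one gets from the Latin square type parameters with $r=1$ in the notation of Subsection~3.2).

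Next, given a divisor $t$ of $n$ and an abelian group $H$ of order $t$, I would choose a partition $\{P_h\}_{h\in H}$ of the index set $\{1,\dots,n+1\}$ with $|P_{e_H}| = \tfrac{n}{t}+1$ and $|P_h| = \tfrac{n}{t}$ for $h \neq e_H$; this is possible since $(\tfrac{n}{t}+1) + (t-1)\tfrac{n}{t} = n+1$. Setting $X_h = \bigcup_{i\in P_h} S_i$ and keeping $\{e_G\}$ as the trivial basic set, the collection $\{\{e_G\}\} \cup \{X_h : h \in H\}$ is a partition of $G$ refining $\mathcal{S}(\mathcal{A}_0)$. Because $\mathcal{A}_0$ is amorphic (every coarsening of $\mathcal{S}(\mathcal{A}_0)$ containing $\{e_G\}$ spans an $S$-ring, by definition), the $\mathbb{Z}$-span $\mathcal{A}$ of $\{\underline{\{e_G\}}\} \cup \{\underline{X_h}\}$ is an $S$-ring over $G$ of rank $t+1$. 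That $\mathcal{A}$ is again amorphic of Latin square type then follows from \cite[Theorem~1]{DM}: its nontrivial basic sets, being unions of Latin square type PDSs (equivalently, unions of classes of an amorphic association scheme all of Latin square type), are themselves Latin square type PDSs, and any further coarsening remains an $S$-ring by transitivity of the fusion operation. Finally, the cardinalities are $|X_h| = |P_h|(n-1) = k_h(n-1)$ with $k_{e_H} = \tfrac{n}{t}+1$ and $k_h = \tfrac{n}{t}$ otherwise, matching the hypotheses, so \eqref{amorph} is exactly \cite[Corollary~1]{DM} applied to $\mathcal{A}$.

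The only point requiring care — and the closest thing to an obstacle — is the bookkeeping that makes \eqref{amorph} hold with those particular $k_h$, namely checking that the structure constants of the fused $S$-ring are governed solely by the $k_h$ and not by which classes $S_i$ were grouped together. This is where one genuinely uses the Latin square type hypothesis: for an amorphic association scheme all of whose relations have Latin square type parameters, the fusion of a set of relations of total valency $k(n-1)$ again has the Latin square type parameters with that same $k$, independently of the choice of relations fused. I would cite \cite{DM} (Theorem~1 and Corollary~1) for this rather than rederiving it, since the excerpt explicitly licenses us to use \eqref{amorph} as a consequence of \cite[Corollary~1]{DM}. Everything else is routine: counting the partition sizes and noting that amorphicity is preserved under fusion.
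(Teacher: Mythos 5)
Your argument is correct and coincides with the paper's own proof: both take the rank-$(n+2)$ amorphic Latin square type $S$-ring of \cite{GIK} over the elementary abelian group of order $n^2$, fuse its $n+1$ nontrivial basic sets of size $n-1$ according to a partition $\{P_h\}_{h\in H}$ with the prescribed class sizes, and invoke amorphicity together with \cite[Theorem~1, Corollary~1]{DM} to obtain~\eqref{amorph}. The only blemish is the word ``refining'' --- the new partition is a \emph{coarsening} (fusion) of $\mathcal{S}(\mathcal{A}_0)$, not a refinement, as your own subsequent appeal to amorphicity makes clear.
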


It is well-known that there exists a natural abelian group structure on the set of all endomorphisms $\mathrm{End}(H)$ of an abelian group $H$. It will be convenient to use multiplicative notation for group operation in $H$ and $+$ for addition of endomorphisms in $\mathrm{End}(H)$. To make notation coherent, we write $h^f$ for the image of $h\in H$ under an endomorphism $f\in \mathrm{End}(H)$. Thus, $h^f h^g = h^{f+g}$. We denote the trivial endomorphism as $\mathbf{o}$ and write $-f$ for an additive inverse to $f$. Notice that $\aut(H)$ is the multiplicative group of $\End(H)$.

Given $f\in \aut(H)$, let us define the set $Y_f\subseteq H\times G$ as follows:
$$Y_f=\{e\}\cup \bigcup \limits_{h\in H} h^f X_h.$$
Since each $X_h$ is reversible, 
$$Y_f^{(-1)}=Y_{-f}.$$
 One can see that 
$$|Y_f|=1+(\frac{n}{t}+1)(n-1)+(t-1)\frac{n}{t}(n-1)=n^2.$$
By the definition, $Y_f$ is a transversal for $H$ in $H\times G$ and hence $\underline{Y_f}\cdot \underline{H}=\underline{H}\cdot \underline{Y_f}=\underline{H\times G}$.

\begin{prop}\label{davis}
Given $f_1,f_2\in \aut(H)$, 
\begin{equation}\label{davis1}
\underline{Y_{f_1}}\cdot\underline{Y_{f_2}}=
\begin{cases}
n^2e+\frac{n^2}{t}\underline{H\times G}^\#,~f_2=-f_1,\\
n\underline{Y_{f_1 + f_2}}+(n-1)\frac{n}{t}\underline{G\times H},~f_1 + f_2\in \aut(H).
\end{cases}
\end{equation}
\end{prop}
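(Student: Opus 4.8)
The plan is to compute the product $\underline{Y_{f_1}}\cdot\underline{Y_{f_2}}$ directly in the group ring $\mathbb{Z}[H\times G]$ by expanding each $Y_{f_i}$ according to its definition $Y_{f_i}=\{e\}\cup\bigcup_{h\in H}h^{f_i}X_h$, and then collecting terms using the structure constants~\eqref{amorph} of the amorphic $S$-ring together with the fact that the $H$-part and $G$-part multiply independently. Writing $\underline{Y_{f_i}}=e+\sum_{h\in H}h^{f_i}\underline{X_h}$, we get
$$\underline{Y_{f_1}}\cdot\underline{Y_{f_2}}=e+\sum_{h}h^{f_2}\underline{X_h}+\sum_{h}h^{f_1}\underline{X_h}+\sum_{h,h'}h^{f_1}(h')^{f_2}\,\underline{X_h}\cdot\underline{X_{h'}}.$$
The main work is the double sum $\sum_{h,h'}h^{f_1}(h')^{f_2}\underline{X_h}\cdot\underline{X_{h'}}$, which splits into the diagonal part $h'=h$ and the off-diagonal part $h'\neq h$, each handled by the corresponding case of~\eqref{amorph}.

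For the diagonal part, using the first case of~\eqref{amorph} with $k_h$ as given ($k_{e_H}=\frac{n}{t}+1$, $k_h=\frac{n}{t}$ otherwise), one substitutes $h^{f_1}h^{f_2}=h^{f_1+f_2}$ and collects the contributions proportional to $e$, to $h^{f_1+f_2}\underline{X_h}$, and to $\underline{G}^\#$. For the off-diagonal part, the second case of~\eqref{amorph} gives a sum of terms $h^{f_1}(h')^{f_2}(k_hk_{h'}\underline{G}^\#-k_{h'}\underline{X_h}-k_h\underline{X_{h'}})$ over all ordered pairs with $h\neq h'$; here one uses $\sum_{h\in H}\underline{h}=\underline{H}$ and the normalization $\sum_h k_h = n+1$ to simplify the coefficients, and crucially $\underline{H}\cdot\underline{G}^\# = (\text{const})\underline{H\times G}$ modulo controllable lower-order terms. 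The key algebraic identity making everything collapse is that $\underline{Y_{f_1+f_2}}=e+\sum_h h^{f_1+f_2}\underline{X_h}$ whenever $f_1+f_2\in\aut(H)$, so that the coefficient of $h^{f_1+f_2}\underline{X_h}$ coming out of the computation must be the same for every $h$ (equal to $n$ in the claimed formula); verifying this uniformity — i.e. that the diagonal contribution $-2k_h$ plus the off-diagonal contribution combine to a value independent of whether $h=e_H$ — is the point where the specific values of the $k_h$ and the count $\sum k_h=n+1$ are used.

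I expect the main obstacle to be bookkeeping: carefully separating the $\underline{G}^\#$ contributions (which must add up to the $\frac{n^2}{t}\underline{H\times G}^\#$ term in the degenerate case $f_2=-f_1$, or feed into the $(n-1)\frac{n}{t}\underline{G\times H}$ term in the generic case) from the $\underline{X_h}$ contributions, and handling the $e_H$ versus $H^\#$ asymmetry in $k_h$ cleanly. A useful device is to work modulo the ideal spanned by $\underline{H\times G}$ (as in the proof of Proposition~\ref{function}), or equivalently to track the ``$\underline{H\times G}$-part'' separately via the transversal relations $\underline{Y_{f}}\cdot\underline{H}=\underline{H}\cdot\underline{Y_f}=\underline{H\times G}$ and $\underline{X_h}\cdot\underline{G}=|X_h|\,\underline{G}$; this lets one pin down the global coefficient of $\underline{H\times G}$ by a dimension/counting argument (both sides have the same total sum of coefficients, namely $|Y_{f_1}||Y_{f_2}|=n^4$) rather than by brute expansion. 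Once the $\underline{H\times G}$-coefficient is fixed this way and the $e$- and $\underline{X_h}$-coefficients are computed from~\eqref{amorph}, matching against~\eqref{davis1} is immediate. The case split $f_2=-f_1$ (when $h^{f_1+f_2}=e_H$ for all $h$, so all the $h^{f_1+f_2}\underline{X_h}$ collapse onto $\underline{X_{e_H}}$-type terms and recombine with the $e$ and $\underline{G}^\#$ contributions into $n^2e+\frac{n^2}{t}\underline{H\times G}^\#$) versus $f_1+f_2\in\aut(H)$ should be dispatched separately but by the same computation.
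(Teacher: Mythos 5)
Your expansion $\underline{Y_{f_i}}=e+\sum_{h}h^{f_i}\underline{X_h}$, the diagonal/off-diagonal split of the double sum via~\eqref{amorph}, and the use of $\sum_h h^{f_1+f_2}=\underline{H}$ together with the transversal relations are exactly the paper's proof, which simply carries the bookkeeping through in full (organizing the product as $e+T_1+T_2+T_3\cdot\underline{G}^\#-T_4-T_5$ and evaluating each $T_i$ in both cases). The one point where you genuinely deviate is the proposal to recover the coefficient of $\underline{H\times G}$ from the mass count $|Y_{f_1}||Y_{f_2}|=n^4$ rather than by direct computation; this is legitimate, but only after you have verified that the coefficient function is constant off $Y_{f_1+f_2}$ (resp.\ off $\{e\}$), and that constancy --- in particular on the slice $H^\#\times\{e_G\}$, which receives contributions only from your $\sum_h k_h h^{f_1}h^{f_2}$-type terms and not from anything multiplied by $\underline{G}^\#$ --- is most of the bookkeeping anyway, so the saving is modest. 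One concrete warning about your checksum: in the case $f_2=-f_1$ it gives $n^2+\frac{n^2}{t}(tn^2-t)=n^4$ for $\underline{(H\times G)\setminus H}$ but \emph{not} for $\underline{(H\times G)}^\#$ as printed in~\eqref{davis1}; since $Y_{f_1}$ is a transversal for $H$ in $H\times G$, no element of $H^\#$ can occur in $\underline{Y_{f_1}}\cdot\underline{Y_{f_1}}^{(-1)}$, so the first line of~\eqref{davis1} must be read as $n^2e+\frac{n^2}{t}\,\underline{(H\times G)\setminus H}$ (which is what the linked-system definition~\eqref{linked} and Corollary~\ref{linkeddavis} require) --- do not let that mismatch derail your count.
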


\begin{proof}
An explicit computation in the group ring of $\mathbb{Z}(H\times G)$ using~\eqref{amorph} implies that
$$\underline{Y_{f_1}}\cdot \underline{Y_{f_2}}=\left(e+\sum\limits_{h\in H} h^{f_1}\underline{X_h}\right)\cdot \left(e+\sum\limits_{h\in H} h^{f_2}\underline{X_h}\right)=$$
$$=e+\sum\limits_{h\in H} h^{f_1}\underline{X_h}+\sum\limits_{h\in H} h^{f_2}\underline{X_h}+\sum\limits_{h\in H} h^{f_1}h^{f_2}\underline{X_h}^2+\sum\limits_{h\neq h^\prime\in H} h^{f_1}{h^\prime}^{f_2}\underline{X_h}\cdot\underline{X_{h^\prime}}=$$
$$=e+\sum\limits_{h\in H} h^{f_1}\underline{X_h}+\sum\limits_{h\in H} h^{f_2}\underline{X_h}+\sum\limits_{h\in H} h^{f_1}h^{f_2}(k_h(n-1)e+(n-2k_h)\underline{X_h}+k_h(k_h-1)\underline{G}^\#)+$$
$$+\sum\limits_{h\neq h^\prime\in H} h^{f_1}{h^\prime}^{f_2}(k_hk_{h^\prime}\underline{G}^\#-k_{h^\prime}\underline{X_h}-k_h\underline{X_{h^\prime}})=$$
$$=e+T_1+T_2+T_3\cdot\underline{G}^\#-T_4-T_5,$$
where
$$T_1=\sum\limits_{h\in H} (h^{f_1}+h^{f_2}+(n-2k_h)h^{f_1}h^{f_2})\underline{X_h},$$
$$T_2=(n-1)\sum\limits_{h\in H} k_hh^{f_1}h^{f_2},$$
$$T_3=\left(\sum\limits_{h\in H} k_h(k_h-1)h^{f_1}h^{f_2}+\sum\limits_{h\neq h^\prime\in H} h^{f_1}{h^\prime}^{f_2}k_hk_{h^\prime}\right),$$
$$T_4=\sum\limits_{h\neq h^\prime\in H} k_{h^\prime}h^{f_1}{h^\prime}^{f_2}\underline{X_h},$$
and
$$T_5=\sum\limits_{h\neq h^\prime\in H} k_hh^{f_1}{h^\prime}^{f_2}\underline{X_{h^\prime}}.$$
Let us compute each $T_i$. Firstly,
$$T_1=\underline{Y_{f_1}}^\#+\underline{Y_{f_2}}^\#+\sum\limits_{h\in H}(n-2k_h)h^{f_1}h^{f_2}\underline{X_h}=$$
$$=\begin{cases}
\underline{Y_{f_1}}^\#+\underline{Y_{f_2}}^\#+(n-\frac{2n}{t})\underline{G}^\#-2\underline{X_e},~f_2=-f_1,\\
\underline{Y_{f_1}}^\#+\underline{Y_{f_2}}^\#+(n-\frac{2n}{t})\underline{Y^\#_{f_1+f_2}}-2\underline{X_e},~f_1 + f_2\in \aut(H).
\end{cases}$$ 
If $f_1 + f_2\in \aut(H)$, then $\sum\limits_{h\in H} h^{f_1}h^{f_2}=\sum\limits_{h\in H} h^{f_1+f_2} = \underline{H}$. So
$$T_2=\begin{cases}
n^2-1,~f_2=-f_1,\\
(n-1)(\frac{n}{t}\underline{H}+e),~f_1 + f_2\in \aut(H).
\end{cases}$$
Since $f_1,f_2\in \aut(H)$, we have $\sum\limits_{h\in H} h^{f_1}=\sum\limits_{h\in H} h^{f_2}=\underline{H}$. Using this, we obtain
$$T_3=\left(\sum\limits_{h\in H} k_hh^{f_1}\right)\cdot\left(\sum\limits_{h\in H} k_hh^{f_2}\right)-\sum\limits_{h\in H} k_hh^{f_1}h^{f_2}=(\frac{n}{t}\underline{H}+e)^2-\sum\limits_{h\in H} k_hh^{f_1}h^{f_2}=$$
$$=\begin{cases}
(\frac{n^2}{t}+2\frac{n}{t})\underline{H}-ne,~f_2=-f_1,\\
(\frac{n^2}{t}+\frac{n}{t})\underline{H},~f_1 + f_2\in \aut(H).
\end{cases}$$
Note that $\underline{Y_{f_1}}^\#\cdot \underline{H}=\underline{Y_{f_2}}^\#\cdot \underline{H}=\underline{H\times G}^\#$ because $Y_{f_1}$ and $Y_{f_2}$ are transversals for $H$ in $H\times G$. Using this, one can compute $T_4$:
$$T_4=\sum\limits_{h^\prime\in H} k_{h^\prime} {h^\prime}^{f_2} (\underline{Y_{f_1}}^\#-{h^\prime}^{f_1}\underline{X_{h^\prime}})=\underline{Y_{f_1}}^\#(\frac{n}{t}\underline{H}+e)-\sum\limits_{h^\prime\in H} k_{h^\prime}{h^\prime}^{f_1}{h^\prime}^{f_2}\underline{X_{h^\prime}}=$$
$$=\begin{cases}
\frac{n}{t}\underline{H}^\#\cdot \underline{G}^\#+\underline{Y_{f_1}}^\#-\underline{X_e},~f_2=-f_1,\\
\frac{n}{t}\underline{H\times G}^\#+\underline{Y_{f_1}}^\#-\frac{n}{t}\underline{Y^\#_{f_1 + f_2}}-\underline{X_e},~f_1 + f_2\in \aut(H).
\end{cases}$$
Similarly,
$$T_5=\begin{cases}
\frac{n}{t}\underline{H}^\#\cdot \underline{G}^\#+\underline{Y_{f_2}}^\#-\underline{X_e},~f_2=-f_1,\\
\frac{n}{t}\underline{H\times G}^\#+\underline{Y_{f_2}}^\#-\frac{n}{t}\underline{Y^\#_{f_1 + f_2}}-\underline{X_e},~f_1 + f_2\in \aut(H).
\end{cases}$$
Substituting expressions for $T_1$, $T_2$, $T_3$, $T_4$, and $T_5$ to $\underline{Y_{f_1}}\cdot \underline{Y_{f_2}}=e+T_1+T_2+T_3\cdot\underline{G}^\#-T_4-T_5$, we obtain~\eqref{davis1} as required.
\end{proof}

The corollary below immediately follows from definitions and Proposition~\ref{davis}.

\begin{corl}\label{linkeddavis}
Let $S\leq \End(H)$ such that $S^\#\subseteq \aut(H)$. Then $\mathcal{L}=\{Y_f:~f\in S^\#\}$ is a closed linked system of RDSs in $H\times G$ with forbidden subgroup $H$, parameters
$$(n^2,t,n^2,\frac{n^2}{t},|S|-1,n+(n-1)\frac{n}{t},(n-1)\frac{n}{t}),$$
a pair of characteristic functions 
$$\chi(f)=-f~\text{and}~\psi(f_1,f_2)=f_1 + f_2,$$
and associated group isomorphic to~$S$.
\end{corl}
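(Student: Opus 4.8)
The plan is to verify directly that the collection $\mathcal{L}=\{Y_f:f\in S^\#\}$ satisfies all the requirements in the definition of a closed linked system, reading off every piece of structure from Proposition~\ref{davis}. First I would observe that each $Y_f$ with $f\in S^\#\subseteq\aut(H)$ is a transversal for $H$ in $H\times G$ (this was noted just before Proposition~\ref{davis}), so $|Y_f|=n^2$ and $\underline{Y_f}\cdot\underline{H}=\underline{H}\cdot\underline{Y_f}=\underline{H\times G}$; taking $f_1=f$, $f_2=-f$ in~\eqref{davis1} gives $\underline{Y_f}\cdot\underline{Y_f}^{(-1)}=\underline{Y_f}\cdot\underline{Y_{-f}}=n^2e+\frac{n^2}{t}\underline{(H\times G)}^\#=n^2e+\frac{n^2}{t}\bigl(\underline{H\times G}-\underline{H}\bigr)$, which is exactly the RDS equation with $k=n^2$, $\lambda=\frac{n^2}{t}$, forbidden subgroup $H$ of order $t$, and index $m=|H\times G:H|=n^2$. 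So each $Y_f$ is a semiregular $(n^2,t,n^2,\frac{n^2}{t})$-RDS with forbidden subgroup $H$.

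Next I would check that $\mathcal{L}$ is indexed by $S^\#$ with the claimed characteristic functions. Since $S$ is a subgroup of $\End(H)$ with $S^\#\subseteq\aut(H)$, for $f\in S^\#$ we have $-f\in S^\#$ as well, and $Y_f^{(-1)}=Y_{-f}$ (recorded before the proposition), so setting $\chi(f)=-f$ we get $Y_f^{(-1)}=Y_{\chi(f)}$ and $\chi$ is a bijection of $S^\#$ with $\chi\circ\chi=\id$. For $f_1,f_2\in S^\#$ with $f_2\neq\chi(f_1)=-f_1$, the sum $f_1+f_2$ lies in $S$; I must argue $f_1+f_2\in S^\#$, i.e.\ $f_1+f_2\neq\mathbf{o}$, which holds precisely because $f_2\neq-f_1$, and then $f_1+f_2\in S^\#\subseteq\aut(H)$, so the second case of~\eqref{davis1} applies. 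Writing $\psi(f_1,f_2)=f_1+f_2$, the second line of~\eqref{davis1} reads
$$\underline{Y_{f_1}}\cdot\underline{Y_{f_2}}=n\,\underline{Y_{\psi(f_1,f_2)}}+(n-1)\tfrac{n}{t}\,\underline{H\times G}.$$
To match the template~\eqref{linked} I rewrite $\underline{H\times G}=\underline{Y_{\psi(f_1,f_2)}}+\bigl(\underline{(H\times G)\setminus Y_{\psi(f_1,f_2)}}\bigr)$, giving coefficient $\mu=n+(n-1)\frac{n}{t}$ on $\underline{Y_{\psi(f_1,f_2)}}$ and $\nu=(n-1)\frac{n}{t}$ on the complement, exactly as stated.

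Finally, the set $D=\{(f,\chi(f)):f\in S^\#\}$ in the definition is precisely where the first case of~\eqref{davis1} holds, and on its complement $\psi$ is defined and~\eqref{linked} holds, so $\mathcal{L}$ is a closed linked system with parameters $(n^2,t,n^2,\frac{n^2}{t},|S^\#|,\mu,\nu)=(n^2,t,n^2,\frac{n^2}{t},|S|-1,n+(n-1)\frac{n}{t},(n-1)\frac{n}{t})$ and pair of characteristic functions $(\chi,\psi)$. For the associated group, Proposition~\ref{function} builds $S^\infty(\mathcal{L})$ on the set $S^\#\cup\{\infty\}$ with inversion $\widehat{\chi}$ extending $f\mapsto-f$ and multiplication $\widehat{\psi}$ extending $(f_1,f_2)\mapsto f_1+f_2$; the map sending $\infty\mapsto\mathbf{o}$ and $f\mapsto f$ for $f\in S^\#$ is then a bijection $S^\infty(\mathcal{L})\to S$ that intertwines $\widehat{\chi}$ with additive inversion and $\widehat{\psi}$ with addition in $S$, hence a group isomorphism $S^\infty(\mathcal{L})\cong S$. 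The only mildly delicate point is the bookkeeping in the rewriting $\underline{H\times G}\mapsto\underline{Y}+\underline{\text{complement}}$ and confirming that $\mathbf{o}\notin S^\#$ forces all the case distinctions to line up with~\eqref{linked}; everything else is a direct substitution, so I do not expect a real obstacle here—the proposition really is a corollary of Proposition~\ref{davis} together with the definitions.
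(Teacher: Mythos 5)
Your proposal is correct and follows exactly the route the paper intends: the paper offers no written proof beyond the remark that the corollary ``immediately follows from definitions and Proposition~\ref{davis}'', and your verification (reading the RDS equation off the first case of~\eqref{davis1}, checking $f_1+f_2\in S^\#\subseteq\aut(H)$ when $f_2\neq -f_1$, regrouping $\underline{H\times G}$ to extract $\mu$ and $\nu$, and identifying $S^\infty(\mathcal{L})$ with $(S,+)$ via $\infty\mapsto\mathbf{o}$) is precisely the routine bookkeeping being left to the reader. Your reading of $\underline{H\times G}^{\#}$ in~\eqref{davis1} as $\underline{H\times G}-\underline{H}$ is the correct interpretation (it is forced by counting coefficients), so no gap remains.
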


The lemma below describes all possible $H$ and $S$ from Corollary~\ref{linkeddavis}.

\begin{lemm}\label{abgroup}
The following statements hold.
\begin{enumerate}
\tm{1} If $S\leq \End(H)$ is such that $S^\#\subseteq \aut(H)$, then $S\cong C_p^i$ and $H\cong C_p^j$ for some prime $p$ and positive integers $i\leq j$.
\tm{2} If $H\cong C_p^j$ for some prime $p$ and $j\geq 1$, then for every $i\leq j$, there is $S\leq \End(H)$ such that $S^\#\subseteq \aut(H)$ and $S\cong C_p^i$.
\end{enumerate}
\end{lemm}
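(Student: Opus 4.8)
The plan is to work with the ring $R = \End(H)$ and exploit the constraint that every nonzero element of $S$ is a unit, i.e. $S$ is a subgroup of $(R,+)$ all of whose nonzero elements lie in $R^\times = \aut(H)$. For part (1), I would first pick any $f \in S^\#$. Since $f \in \aut(H)$ and $f - f = \mathbf{o}$, and more generally for distinct $f_1, f_2 \in S^\#$ the difference $f_1 - f_2 \in S$; if this difference is nonzero it must again be a unit. The key observation is that $S f^{-1} := \{g f^{-1} : g \in S\}$ is again an additive subgroup of $R$ with all nonzero elements units (multiplication by the unit $f^{-1}$ is an additive bijection preserving units), and it contains the identity endomorphism $\id$. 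So without loss of generality $\id \in S$. Now consider the additive order of $\id$ in $(R,+)$: the elements $\id, 2\cdot\id, 3\cdot\id, \dots$ are (images in $R$ of) the integers, and $m \cdot \id$ is the endomorphism $h \mapsto h^m$. For $S$ to be a finite group, some $m \cdot \id = \mathbf{o}$, so $H$ has finite exponent, say $\exp(H) = e$, and $m\cdot\id = \mathbf{o}$ iff $e \mid m$. If $e$ is not prime, write $e = ab$ with $1 < a, b < e$; then $a\cdot\id \in S$ is nonzero but $h \mapsto h^a$ is not injective on $H$ (it kills any element of order $a$), contradicting $a\cdot\id \in \aut(H)$. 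Hence $\exp(H) = p$ is prime, so $H \cong C_p^j$ for some $j \geq 1$, and $S$ becomes an $\mathbb{F}_p$-subspace of $\End(H) \cong \mat_j(\mathbb{F}_p)$; being a finite additive group of exponent $p$, $S \cong C_p^i$ for some $i$. Finally $i \leq j$: since $\id \in S$ and the nonzero elements of $S$ act invertibly, $S \setminus \{\mathbf{o}\}$ together with $\mathbf{o}$ spans an $\mathbb{F}_p$-subalgebra-like object — more carefully, fix $0 \neq h_0 \in H$; the evaluation map $S \to H$, $f \mapsto h_0^f$, is $\mathbb{F}_p$-linear and injective (if $h_0^f = h_0^g$ with $f \neq g$ then $f - g$ is a nonzero element of $S$ killing $h_0$, contradicting invertibility), so $|S| \leq |H|$, giving $i \leq j$.

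For part (2), the natural approach is to realize $C_p^i \hookrightarrow \End(C_p^j)$ as units-plus-zero. Identify $H = C_p^j$ with the additive group $(\mathbb{F}_{p^j}, +)$ of the field with $p^j$ elements; then multiplication by any nonzero element of $\mathbb{F}_{p^j}$ is an automorphism of $H$, giving an embedding $\mathbb{F}_{p^j} \hookrightarrow \End(H)$ under which $\mathbb{F}_{p^j}^\times$ lands in $\aut(H)$ and which is simultaneously a ring homomorphism (in particular additive). Now inside the field $\mathbb{F}_{p^j}$, choose any $\mathbb{F}_p$-subspace $V$ of dimension $i \leq j$; then $V$ is an additive subgroup with $V \cong C_p^i$ and $V \setminus \{0\} \subseteq \mathbb{F}_{p^j}^\times$. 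Transporting $V$ into $\End(H)$ via the embedding yields the desired $S$: it is an additive subgroup isomorphic to $C_p^i$ with $S^\# \subseteq \aut(H)$. (One may equally take $V = \mathbb{F}_{p^i} \subseteq \mathbb{F}_{p^j}$ when $i \mid j$, but an arbitrary subspace works and covers all $i \leq j$.)

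The step I expect to require the most care is the inequality $i \leq j$ in part (1); the existence direction and the ``exponent is prime'' argument are routine, but pinning down why the dimension of $S$ cannot exceed $\dim_{\mathbb{F}_p} H$ needs the injectivity of an evaluation map, which in turn crucially uses that \emph{every} nonzero element of $S$ is invertible (not merely nonzero as an endomorphism). I would state this as a small lemma: a nonzero $\mathbb{F}_p$-subspace $S$ of $\End(V)$ for a finite-dimensional $\mathbb{F}_p$-space $V$, with $S \setminus \{0\} \subseteq \GL(V)$, satisfies $\dim S \leq \dim V$ — proved by the evaluation argument above, or equivalently by noting that $S$ is a ``space of matrices of constant rank $= \dim V$'' and applying the elementary bound. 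Everything else is bookkeeping with the ring structure of $\End(H)$ and the identification of $H$ with $(\mathbb{F}_{p^j}, +)$.
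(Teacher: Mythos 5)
Your proof is correct and follows essentially the same route as the paper: the core of part (1) is the observation that a non-invertible multiple of an element of $S$ must equal $\mathbf{o}$ (forcing exponent $p$ on both $S$ and $H$) together with the injectivity of the evaluation map $f\mapsto h_0^f$ giving $i\leq j$, and part (2) rests on a $j$-dimensional additive subgroup of $\End(H)$ all of whose nonzero members are invertible. The only differences are cosmetic: the paper cites Guralnick for the existence of that subgroup whereas you construct it explicitly via multiplication in $\mathbb{F}_{p^j}$, and your normalization to $\id\in S$ is a slightly longer path to $\exp(H)=p$ than the paper's direct remark that $pf$ is never an automorphism when $p$ divides $|H|$.
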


\begin{proof}
$(1)$ Let $p$ be a prime divisor of $|H|$. For every $f\in S^\#$, we have $pf\in S\setminus \aut(H)$. So $pf=\textbf{o}$ and hence $S\cong C_p^i$ for some $i\geq 1$. Since $f\in \aut(H)$ and $(h^f)^p = h^{pf}=h^{\textbf{o}}=e$ for every $h\in H$, we obtain $h^p=e$ for every $h\in H$. Therefore $H\cong C_p^j$ for some $j\geq 1$.

Pick an arbitrary $h\in H^\#$. Then $h^S=\{h^f:~f\in S\}$ is a subgroup of $H$. So $|h^S|$ divides $|H|$. Observe that if $h^f=e$ for some $f\in S$, then $f=\textbf{o}$. Therefore the homomorphism $f\mapsto h^f$ from $S$ to $h^S$ is a monomorphism. Thus, $p^i=|S|=|h^S|$ divides $|H|=p^j$.

$(2)$ From~\cite[Lemma~3.2]{Gur} it follows that the vector space $\End(C_p^j)\cong M_j(\mathbb{Z}_p)$ contains a $j$-dimensional subspace in which every nonzero matrix is invertible. So if $H\cong C_p^j$, then $\End(H)$ has a subgroup of order $p^j$ whose all nontrivial elements belong to $\aut(H)$.
\end{proof}

Now Lemma~\ref{amorphq}, Corollary~\ref{linkeddavis}, and Lemma~\ref{abgroup} imply the following statement.

\begin{corl}\label{linkedgen}
For every prime power $n$, divisor $t$ of $n$, and divisor $s$ of $t$, there is a closed linked system of RDSs with parameters 
$$(n^2,t,n^2,\frac{n^2}{t},s-1,n+(n-1)\frac{n}{t},(n-1)\frac{n}{t})$$
in an elementary abelian group of order~$n^2t$.
\end{corl}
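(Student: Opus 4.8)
The plan is to assemble Corollary~\ref{linkedgen} purely by chaining together the three results that immediately precede it, so no new group-ring computation is needed. First I would fix a prime power $n$, a divisor $t$ of $n$, and a divisor $s$ of $t$. Applying Lemma~\ref{amorphq} to the pair $(n,t)$ produces an elementary abelian group $G$ of order $n^2$ carrying an amorphic $S$-ring of Latin square type and rank $t+1$ whose basic sets $X_h$ are indexed by the elements of an abelian group $H$ of order $t$, with cardinalities $|X_h|=k_h(n-1)$ for $k_e=\frac{n}{t}+1$ and $k_h=\frac{n}{t}$ otherwise; this is exactly the setup required to invoke the $Y_f$ construction and Proposition~\ref{davis}.

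The second step is to choose $H$ and the subgroup $S\leq\End(H)$ correctly. Since $s\mid t$ and $t$ is a divisor of the prime power $n$, the group $H$ of order $t$ produced by Lemma~\ref{amorphq} is itself a $p$-group for the prime $p$ underlying $n$; I would take $H\cong C_p^{\,j}$ with $p^j=t$ (so we are free to realize $H$ as elementary abelian, which is consistent with Lemma~\ref{amorphq} since that lemma only demands $H$ abelian of order $t$, and here $t$ is a prime power). Write $s=p^i$ with $i\le j$. Then part~(2) of Lemma~\ref{abgroup} supplies a subgroup $S\leq\End(H)$ with $S\cong C_p^{\,i}$ and $S^\#\subseteq\aut(H)$, i.e.\ $|S|=s$.

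The third step is to feed this $S$ into Corollary~\ref{linkeddavis}: since $S^\#\subseteq\aut(H)$, the family $\mathcal{L}=\{Y_f:~f\in S^\#\}$ is a closed linked system of RDSs in $H\times G$ with forbidden subgroup $H$ and parameters
$$(n^2,t,n^2,\tfrac{n^2}{t},|S|-1,\,n+(n-1)\tfrac{n}{t},\,(n-1)\tfrac{n}{t}).$$
Substituting $|S|=s$ gives precisely the parameter vector in the statement, and $|H\times G|=t\cdot n^2=n^2 t$ is the claimed order of the ambient (elementary abelian) group. This completes the proof.

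Honestly, there is no real obstacle here: the statement is a bookkeeping corollary, and the only point requiring a moment's care is the compatibility check in the second step — namely that the abelian group $H$ of order $t$ furnished by Lemma~\ref{amorphq} can be taken to be the elementary abelian $p$-group $C_p^{\,j}$ needed by Lemma~\ref{abgroup}(2), which is immediate because $t$ divides the prime power $n$ and Lemma~\ref{amorphq} places no constraint on the isomorphism type of $H$ beyond being abelian of order $t$. Everything else is direct substitution.
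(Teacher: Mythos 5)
Your proposal is correct and follows exactly the paper's route: the paper derives Corollary~\ref{linkedgen} by precisely this chaining of Lemma~\ref{amorphq}, Lemma~\ref{abgroup}(2), and Corollary~\ref{linkeddavis}, with $H\cong C_p^j$ for $t=p^j$ and $S\cong C_p^i$ for $s=p^i$. The compatibility point you flag (that $H$ may be taken elementary abelian, since Lemma~\ref{amorphq} allows any abelian group of order $t$ and $t$ divides the prime power $n$) is indeed the only detail worth noting, and you handle it correctly.
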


The linked system from~\cite{DPS2} is a special case of the above linked system when $n=q^r$ and $t=s=q$ for a prime power $q$ and positive integer $r$.

\section{Linked system of RDSs in a Heisenberg group}\label{heisenberg}

In this section, we prove a statement (see Corollary~\ref{heis2r}) which is refinement of Theorem~\ref{main2}. Recall that each Heisenberg group is a central product of Heisenberg groups of dimension~$3$. So in view of Proposition~\ref{product} and~\eqref{munu-reg}, to prove Theorem~\ref{main2}, it is enough to construct a closed linked system of RDSs with parameters~$(q^2,q,q^2,q,q)$ in a Heisenberg group of dimension~$3$. To do this, we study a special cyclotimic $S$-ring over this group and show that each basic set of this $S$-ring outside the center together with the identity element forms a relative to the center difference set (Corollary~\ref{heissystem}).

Let $q$ be an odd prime power and $\mathbb{F}_q$ a finite field of order~$q$. Throughout this section except Corollary~\ref{heis2r}, 
$$G=\left\{ 
\left(\begin{smallmatrix}
1 & x & z\\
0 & 1 & y\\
0 & 0 & 1
\end{smallmatrix}\right):~x,y,z\in \mathbb{F}_q \right\}$$
is a Heisenberg group of dimension~$3$ over $\mathbb{F}_q$. Put
$$Z=Z(G)=\left\{ 
\left(\begin{smallmatrix}
1 & 0 & z\\
0 & 1 & 0\\
0 & 0 & 1
\end{smallmatrix}\right):~z\in \mathbb{F}_q \right\}.$$
Clearly, $|G|=q^3$ and $|Z|=q$.

Let $\varepsilon\in \mathbb{F}_q$ be a nonsquare and 
$$\mathcal{M}=\mathcal{M}(\varepsilon)=\left\{ 
\left(\begin{smallmatrix}
\alpha & \beta \\
\varepsilon \beta & \alpha \\
\end{smallmatrix}\right):~\alpha,\beta\in \mathbb{F}_q,~(\alpha,\beta)\neq (0,0) \right\}.$$
It is easy to check (see, e.g.,~\cite[Chapter~2.5]{LN}) that $\mathcal{M}$ (with the standard matrix multiplication) is a subgroup of $\GL_2(q)$ isomorphic to the multiplicative group $\mathbb{F}_{q^2}^\times$. In particular, $|\mathcal{M}|=q^2-1$ and $\mathcal{M}$ is cyclic. Given $M=\left(\begin{smallmatrix}
\alpha & \beta \\
\varepsilon \beta & \alpha \\
\end{smallmatrix}\right)\in \mathcal{M}$, let us define $\varphi=\varphi(M):G\rightarrow G$ as follows:
$$\varphi\left(\left(\begin{smallmatrix}
1 & x & z\\
0 & 1 & y\\
0 & 0 & 1
\end{smallmatrix}\right)\right)=
\left(\begin{smallmatrix}
1 & \alpha x+\varepsilon \beta y & F_{\alpha,\beta}(x,y,z)\\
0 & 1 & \beta x+ \alpha y\\
0 & 0 & 1
\end{smallmatrix}\right),$$
where $F_{\alpha,\beta}(x,y,z)=\alpha\beta(\frac{x^2}{2}+\varepsilon\frac{y^2}{2})+\varepsilon \beta^2 xy +(\alpha^2-\varepsilon \beta^2)z$.

\begin{lemm}\label{autheis}
The mapping $M\mapsto \varphi(M)$ is a monomorphism from $\mathcal{M}$ to $\aut(G)$.
\end{lemm}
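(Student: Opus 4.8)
The plan is to verify directly that $\varphi$ as defined is (a) a group homomorphism $\mathcal{M}\to\aut(G)$, and (b) injective; surjectivity onto $\aut(G)$ is not claimed, only that the image lies in $\aut(G)$, so "monomorphism from $\mathcal{M}$ to $\aut(G)$" just means an injective homomorphism whose image consists of automorphisms. I would split the argument into three checks: each $\varphi(M)$ is a bijection of $G$; each $\varphi(M)$ is an automorphism of $G$; and $M\mapsto\varphi(M)$ respects multiplication.

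For the first two checks it is convenient to coordinatize: write an element of $G$ as a triple $(x,y,z)$ with multiplication $(x,y,z)(x',y',z')=(x+x',y+y',z+z'+xy')$. Under this identification $\varphi(M)$ sends $(x,y,z)$ to $(\alpha x+\varepsilon\beta y,\ \beta x+\alpha y,\ F_{\alpha,\beta}(x,y,z))$. The map on the first two coordinates is the linear map $M^{\mathsf{t}}$ (or $M$, depending on convention) acting on $(x,y)$, which is invertible since $\det M=\alpha^2-\varepsilon\beta^2\ne 0$ for $M\in\mathcal{M}$ (this is exactly the norm form of $\mathbb{F}_{q^2}/\mathbb{F}_q$, nonzero because $\varepsilon$ is a nonsquare); and $F_{\alpha,\beta}$ is affine-linear in $z$ with leading coefficient $\alpha^2-\varepsilon\beta^2\ne 0$, so for fixed $(x,y)$ the $z$-component is a bijection $\mathbb{F}_q\to\mathbb{F}_q$. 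Hence $\varphi(M)$ is a bijection of the set $G$. To see it is a homomorphism of $G$, one computes $\varphi(M)\big((x,y,z)(x',y',z')\big)$ and compares with $\varphi(M)(x,y,z)\cdot\varphi(M)(x',y',z')$; the first two coordinates match automatically by linearity, and the third reduces to the identity
\[
F_{\alpha,\beta}(x+x',y+y',z+z'+xy')=F_{\alpha,\beta}(x,y,z)+F_{\alpha,\beta}(x',y',z')+(\alpha x+\varepsilon\beta y)(\beta x'+\alpha y'),
\]
which is a polynomial identity in $x,y,z,x',y',z'$ over $\mathbb{F}_q$ that one expands and checks term by term (here one uses that $q$ is odd, so $\tfrac12$ makes sense). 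This last identity is the computational heart of the lemma.

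For the homomorphism property of $M\mapsto\varphi(M)$, I would take $M_1,M_2\in\mathcal{M}$ with entries $(\alpha_i,\beta_i)$, let $M=M_1M_2$ with entries $(\alpha,\beta)=(\alpha_1\alpha_2+\varepsilon\beta_1\beta_2,\ \alpha_1\beta_2+\beta_1\alpha_2)$, and verify $\varphi(M_1)\circ\varphi(M_2)=\varphi(M)$ by evaluating both sides on a general $(x,y,z)$. On the first two coordinates this is just the matrix identity $M_1M_2$; on the third coordinate it is again a polynomial identity, essentially the cocycle-type compatibility of the $F$'s under composition, which follows from (or is proved in parallel with) the identity displayed above. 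Injectivity is then immediate: if $\varphi(M)=\id_G$ then already the action on the first two coordinates forces $M=I$, i.e. $(\alpha,\beta)=(1,0)$. Finally one notes $\varphi(I)=\id_G$, completing the proof that $\varphi$ is an injective homomorphism into $\aut(G)$.

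The main obstacle is purely bookkeeping: correctly expanding the quadratic form identity for $F_{\alpha,\beta}$ under both the group multiplication in $G$ and under composition $M_1M_2$, keeping track of the cross terms $\varepsilon\beta^2 xy$ and the $\tfrac12$-coefficients, and confirming everything is consistent with the convention chosen for the multiplication law of $G$ (row-vector vs. column-vector action of $M$). There is no conceptual difficulty once the coordinates are fixed; the only genuine input beyond algebra is that $\alpha^2-\varepsilon\beta^2\ne 0$ on $\mathcal{M}$, which is where the hypothesis that $\varepsilon$ is a nonsquare and (in the composition computation) that $\mathcal{M}\cong\mathbb{F}_{q^2}^\times$ is used, and the standing assumption that $q$ is odd, needed to divide by $2$.
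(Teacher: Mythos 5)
Your proposal is correct and follows essentially the same route as the paper: a direct verification that each $\varphi(M)$ is an automorphism (the paper checks the endomorphism property and trivial kernel, you check bijectivity via the triangular structure plus the same cocycle identity for $F_{\alpha,\beta}$), followed by a direct check that $M\mapsto\varphi(M)$ is multiplicative and injective, the latter in both cases by evaluating on an element with $(x,y)=(1,0)$. The displayed identity for $F_{\alpha,\beta}$ is exactly the computation the paper leaves implicit, and it does hold.
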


\begin{proof}
At first, let us check that $\varphi=\varphi(M)\in \aut(G)$ for every $M=\left(\begin{smallmatrix}
\alpha & \beta \\
\varepsilon \beta & \alpha \\
\end{smallmatrix}\right)\in \mathcal{M}$. A straightforward computation implies that
$$\varphi\left(\left(\begin{smallmatrix}
1 & x & z\\
0 & 1 & y\\
0 & 0 & 1
\end{smallmatrix}\right)
\left(\begin{smallmatrix}
1 & a & c\\
0 & 1 & b\\
0 & 0 & 1
\end{smallmatrix}\right)\right)=\left(\begin{smallmatrix}
1 & \alpha(x+a)+\varepsilon\beta(y+b) & F_{\alpha,\beta}(x+a,y+b,xb+z+c)\\
0 & 1 & \beta(x+a)+\alpha(y+b)\\
0 & 0 & 1
\end{smallmatrix}\right)=\varphi\left(\left(\begin{smallmatrix}
1 & x & z\\
0 & 1 & y\\
0 & 0 & 1
\end{smallmatrix}\right)\right)\varphi\left(\left(\begin{smallmatrix}
1 & a & c\\
0 & 1 & b\\
0 & 0 & 1
\end{smallmatrix}\right)\right).$$
So $\varphi$ is an endomorphism of $G$. Suppose that $\varphi\left(\left(\begin{smallmatrix}
1 & x & z\\
0 & 1 & y\\
0 & 0 & 1
\end{smallmatrix}\right)\right)=I$, where $I$ is the identity matrix. Then $\alpha x+\varepsilon \beta y=\beta x+ \alpha y=F_{\alpha,\beta}(x,y,z)=0$ by the definition of $\varphi$. Since $\varepsilon$ is a nonsquare, $\det(M)=\alpha^2-\varepsilon \beta^2\neq 0$ and hence $x=y=0$. Together with $F_{\alpha,\beta}(x,y,z)=0$, this implies that $z=0$. Therefore, $\varphi$ has a trivial kernel. Thus, $\varphi\in \aut(G)$.

One can verify in a straightforward way that $\varphi(M_1M_2)=\varphi(M_1)\varphi(M_2)$. If $\varphi(M)=\id_G$ for some $M=\left(\begin{smallmatrix}
\alpha & \beta \\
\varepsilon \beta & \alpha \\
\end{smallmatrix}\right)\in \mathcal{M}$, then 

$$\varphi(M)\left(\left(\begin{smallmatrix}
1 & 1 & 0\\
0 & 1 & 0\\
0 & 0 & 1
\end{smallmatrix}\right)\right)=
\left(\begin{smallmatrix}
1 & 1 & 0\\
0 & 1 & 0\\
0 & 0 & 1
\end{smallmatrix}\right)\Rightarrow 
\left(\begin{smallmatrix}
1 & \alpha & \frac{\alpha\beta}{2}\\
0 & 1 & \beta\\
0 & 0 & 1
\end{smallmatrix}\right)=\left(\begin{smallmatrix}
1 & 1 & 0\\
0 & 1 & 0\\
0 & 0 & 1
\end{smallmatrix}\right) \Rightarrow M = \left(\begin{smallmatrix}
1 & 0 \\
0 & 1 \\
\end{smallmatrix}\right)
.$$

Thus, the mapping $M\mapsto \varphi(M)$ is a monomorphism.
\end{proof}

Let $K=\{\varphi(M):~M\in \mathcal{M}\}$. Due to Lemma~\ref{autheis}, $K$ is a cyclic subgroup of $\aut(G)$ of order~$q^2-1$. The next lemma collects properties of $K$-orbits.

\begin{lemm}\label{orbits}
The following statements hold:
\begin{enumerate}
\tm{1} $Z^\#\in \orb(K,G)$;

\tm{2} for every $X\in \orb(K,G\setminus Z)$, the set $X\cup \{e\}$ is a transversal for $Z$ in $G$; in particular, $|X|=q^2-1$.
\end{enumerate}
\end{lemm}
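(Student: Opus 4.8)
The statement concerns the action of the cyclic group $K=\{\varphi(M):M\in\mathcal{M}\}$ of order $q^2-1$ on $G$, and the plan is to analyze the $K$-orbits by examining how $\varphi(M)$ acts on the "coordinates" $(x,y)$ modulo the center. First I would observe that an element $g=\left(\begin{smallmatrix}1&x&z\\0&1&y\\0&0&1\end{smallmatrix}\right)$ lies in $Z$ precisely when $x=y=0$, and that the formula for $\varphi(M)$ sends the pair $(x,y)$ to $(\alpha x+\varepsilon\beta y,\ \beta x+\alpha y)$, i.e. it acts on the column vector $\binom{x}{y}$ by left multiplication by $M$ itself. Since $\mathcal{M}\cong\mathbb{F}_{q^2}^\times$ acts on $\mathbb{F}_q^2\cong\mathbb{F}_{q^2}$ by multiplication, this action is \emph{sharply transitive} on the nonzero vectors $\binom{x}{y}\neq\binom{0}{0}$; the stabilizer of any nonzero vector is trivial because $M\binom{x}{y}=\binom{x}{y}$ forces $M$ to have eigenvalue $1$, hence $M=I$ as $\det(M-I)=(\alpha-1)^2-\varepsilon\beta^2=0$ together with $\beta=0$ (using $\varepsilon$ nonsquare) gives $\alpha=1$.

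For part (1), the subset $Z^\#$ is $K$-invariant since $\varphi(M)$ fixes the center setwise (indeed $\varphi(M)$ sends $z$ to $(\alpha^2-\varepsilon\beta^2)z=\det(M)z$ when $x=y=0$). To see $Z^\#$ is a single orbit I would note that $z\mapsto\det(M)z$ and the norm map $\mathcal{M}\cong\mathbb{F}_{q^2}^\times\to\mathbb{F}_q^\times$, $M\mapsto\det(M)$, is surjective (its kernel has order $q+1$), so $K$ acts transitively on $Z^\#$. For part (2), take $X\in\orb(K,G\setminus Z)$ and a representative $g$ with coordinate pair $(x_0,y_0)\neq(0,0)$. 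By the sharp transitivity on nonzero pairs, for each of the $q^2-1$ choices of a nonzero pair $(x,y)$ there is exactly one $M$ with $\varphi(M)$ sending $(x_0,y_0)$ to $(x,y)$; hence $|X|=q^2-1$ and $X$ contains exactly one element with each nonzero coordinate pair. Consequently $X$ meets each coset $Zg'$ with $g'\notin Z$ in at most one point, and since there are exactly $q^2-1$ such cosets and $|X|=q^2-1$, it meets each in exactly one point; adjoining $e$ picks up the unique coset $Z$ itself, so $X\cup\{e\}$ is a full transversal for $Z$ in $G$.

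The main obstacle I anticipate is keeping the bookkeeping honest when passing between the matrix picture and the coordinate picture: one must check that the map $g\mapsto(x,y)\bmod Z$ genuinely intertwines the $K$-action with the linear $\mathcal{M}$-action on $\mathbb{F}_q^2$ (this is immediate from the displayed formula for $\varphi$, where the top-right entry $F_{\alpha,\beta}$ only affects the $z$-coordinate), and that the stabilizer computation really does force $M=I$ rather than merely $M$ scalar — here the nonsquareness of $\varepsilon$ is exactly what rules out $\beta\neq0$. Once the sharp transitivity of $\mathcal{M}$ on $\mathbb{F}_q^2\setminus\{0\}$ and the surjectivity of $\det$ on $Z^\#$ are in hand, both claims follow by counting, with no further computation needed.
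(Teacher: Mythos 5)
Your proof is correct and takes essentially the same route as the paper: part (1) rests on the surjectivity of the norm $\alpha^2-\varepsilon\beta^2$ onto $\mathbb{F}_q\setminus\{0\}$, and part (2) on the fact that $(\alpha-1)^2-\varepsilon\beta^2=0$ forces $(\alpha,\beta)=(1,0)$ because $\varepsilon$ is a nonsquare, which is precisely your trivial-stabilizer computation on the $(x,y)$-coordinates (the paper phrases it as: $\varphi(M)(g)\in Zg$ for $g\notin Z$ implies $M=I$, from which both semiregularity and the one-point-per-coset property follow). The only cosmetic slip is that the induced action on the pair $(x,y)$ is right multiplication of the row vector $(x,y)$ by $M$, equivalently left multiplication of the column vector by $M^{T}$ rather than by $M$ itself; this does not affect the sharp-transitivity argument.
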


\begin{proof}
(1) \ Let $g=\left(\begin{smallmatrix}
1 & 0 & z\\
0 & 1 & 0\\
0 & 0 & 1
\end{smallmatrix}\right)\in Z$. Then 
$$\varphi(M)(g)=\left(\begin{smallmatrix}
1 & 0 & (\alpha^2-\varepsilon \beta^2)z\\
0 & 1 & 0\\
0 & 0 & 1
\end{smallmatrix}\right)\in Z$$ 
for every $M=\left(\begin{smallmatrix}
\alpha & \beta \\
\varepsilon \beta & \alpha \\
\end{smallmatrix}\right)\in \mathcal{M}$. So $Z^\#$ is a $K$-invariant set. Since $\varepsilon$ is nonsquare, 
$$\{\alpha^2-\varepsilon \beta^2:~\alpha,\beta\in \mathbb{F}_q,~(\alpha,\beta)\neq (0,0) \}=\mathbb{F}_q\setminus\{0\}.$$ 
Therefore $K$ is transitive on $Z^\#$ and hence $Z^\#\in \orb(K,G)$. Thus, part~$(1)$ of the lemma holds.

(2) \ Assume that $\varphi(M)(g)=g^\prime$ for some $g=\left(\begin{smallmatrix}
1 & x & z\\
0 & 1 & y\\
0 & 0 & 1
\end{smallmatrix}\right)\in G$, $g^\prime=\left(\begin{smallmatrix}
1 & x & z^\prime\\
0 & 1 & y\\
0 & 0 & 1
\end{smallmatrix}\right)\in Zg$, and $M=\left(\begin{smallmatrix}
\alpha & \beta \\
\varepsilon \beta & \alpha \\
\end{smallmatrix}\right)\in \mathcal{M}$. Then 
\begin{equation*}
 \begin{cases}
\alpha x+\varepsilon \beta y=x,
\\
\beta x+\alpha y=y.
\end{cases}\iff 
\begin{cases}
(\alpha -1) x+\varepsilon \beta y=0,
\\
\beta x+(\alpha-1) y=0.
\end{cases}
\end{equation*}
Since $(x,y)\neq (0,0)$, the determinant  of the above system is zero, i.e. $(\alpha-1)^2-\varepsilon \beta^2=0$. Since $\varepsilon$ is a nonsquare, the only solution of the latter equation is $\alpha=1,\beta=0$. i.e. $M=I$. The above discussion implies that: $(1)$ $K$ acts semiregularly on $G\setminus Z$ or, equivalently, each orbit of $K$ outside $Z$ is of size $q^2-1$; $(2)$ any two elements of $G\setminus Z$ from the same orbit of $K$ belong to distinct $Z$-cosets. Thus, part~$(2)$ of the lemma holds.  
\end{proof}

Put $\mathcal{A}=\cyc(K,G)$. Due to Lemma~\ref{orbits}, $Z^\#$ is a basic set of $\mathcal{A}$ and there are $\frac{q^3-q}{q^2-1}=q$ basic sets inside $G\setminus Z$. Clearly, $Z$ is an $\mathcal{A}$-subgroup. Given $i\in \mathbb{F}_q$, denote the basic set of $\mathcal{A}$ containing the element $\left(\begin{smallmatrix}
1 & 1 & i\\
0 & 1 & 0\\
0 & 0 & 1
\end{smallmatrix}\right)$ by $Y_i$. By part~$(2)$ of Lemma~\ref{orbits}, all $Y_i$ are pairwise distinct and of size~$q^2-1$. By the definition of $\varphi(M)$, we have
$$Y_i=\left\{\left(\begin{smallmatrix}
1 & \alpha & \gamma_i(\alpha,\beta)\\
0 & 1 & \beta\\
0 & 0 & 1
\end{smallmatrix}\right),~\alpha,\beta\in \mathbb{F}_q,~(\alpha,\beta)\neq (0,0)\right\},$$ 
where $\gamma_i(\alpha,\beta)=\frac{\alpha\beta}{2}+(\alpha^2-\varepsilon \beta^2)i$. It is easy to verify that
$$\left(\begin{smallmatrix}
1 & \alpha & \gamma_i(\alpha,\beta)\\
0 & 1 & \beta\\
0 & 0 & 1
\end{smallmatrix}\right)^{-1}=\left(\begin{smallmatrix}
1 & -\alpha & \gamma_{-i}(-\alpha,-\beta)\\
0 & 1 & -\beta\\
0 & 0 & 1
\end{smallmatrix}\right).$$
The latter equality implies that $Y_i^{(-1)}=Y_{-i}$. In particular, $Y_0^{(-1)}=Y_0$. 

Given $i\in \mathbb{F}_q$, put $X_i=Y_i\cup \{e\}$, where $e=I$ is the identity of $G$. Clearly, $X_i^{(-1)}=X_{-i}$ and $|X_i|=q^2$ for every~$i$.

\begin{prop}\label{multiplication}
In the above notations, 
\begin{equation}\label{link}
\underline{X_i}\cdot \underline{X_j}=
\begin{cases}
q^2 e+q(\underline{G\setminus Z}),~j=-i,\\
X_{\psi(i,j)}+(q+1)(\underline{G\setminus X_{\psi(i,j)}}),~j\neq-i,
\end{cases}
\end{equation}
where $\psi(i,j)=\frac{ij+\delta}{i+j}$ for some nonsquare $\delta\in \mathbb{F}_q$.
\end{prop}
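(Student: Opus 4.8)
The plan is to compute the product $\underline{X_i}\cdot\underline{X_j}$ directly in $\mathbb{Z}G$ using the explicit parametrization of $X_i=Y_i\cup\{e\}$ by pairs $(\alpha,\beta)$. Write a general element of $Y_i$ as $g_i(\alpha,\beta)=\left(\begin{smallmatrix}1&\alpha&\gamma_i(\alpha,\beta)\\0&1&\beta\\0&0&1\end{smallmatrix}\right)$ with $\gamma_i(\alpha,\beta)=\frac{\alpha\beta}{2}+(\alpha^2-\varepsilon\beta^2)i$. The product $g_i(\alpha,\beta)g_j(\alpha',\beta')$ lies in the coset determined by the pair $(\alpha+\alpha',\beta+\beta')$, and its $z$-coordinate is $\gamma_i(\alpha,\beta)+\gamma_j(\alpha',\beta')+\alpha\beta'$. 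First I would show that for $j\neq -i$ the product $\underline{X_i}\cdot\underline{X_j}$ is supported on $X_{\psi(i,j)}$ with multiplicity $1$ there and constant multiplicity $q+1$ off it; since $|X_i||X_j|=q^4$ and $|X_{\psi(i,j)}|+ (q+1)(q^3-q^2)=q^2+(q+1)q^2(q-1)=q^4$, the counting is consistent, so it suffices to identify which basic set $X_{\psi(i,j)}$ arises and to verify each of its elements is hit exactly once.

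The heart of the matter is the following: an element $g_{\psi}(\alpha'',\beta'')$ of $Y_{\psi(i,j)}$ (with $\psi=\psi(i,j)$ to be determined) should be expressible as $g_i(\alpha,\beta)g_j(\alpha',\beta')$ for exactly one choice, while elements of other cosets with the same $(\alpha'',\beta'')$ should be hit exactly $q+1$ times. Fixing the ``horizontal'' data $\alpha+\alpha'=\alpha''$, $\beta+\beta'=\beta''$ leaves a one-parameter family (parametrized, say, by $(\alpha,\beta)$ on a line), and along it the $z$-coordinate $\gamma_i(\alpha,\beta)+\gamma_j(\alpha'-\alpha,\beta'-\beta)+\alpha\beta'$ — wait, more precisely $\gamma_i(\alpha,\beta)+\gamma_j(\alpha''-\alpha,\beta''-\beta)+\alpha(\beta''-\beta)$ — is a quadratic function of the line parameter. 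I would compute this quadratic, observe that its leading coefficient is a fixed nonzero constant times $\bigl(i+j\bigr)$ (using that $\varepsilon$ is a nonsquare, exactly as in the proof of Lemma~\ref{orbits}), hence when $i+j\neq 0$ it takes each value in $\mathbb{F}_q$ either $0$, $1$, or $2$ times, with the value $1$ attained precisely on the coset/$z$-coordinate singled out as $\gamma_{\psi}(\alpha'',\beta'')$. Extracting the vertex of the parabola and matching it to the requirement $z''=\gamma_{\psi}(\alpha'',\beta'')$ for all $(\alpha'',\beta'')$ forces the identity $\psi=\psi(i,j)=\frac{ij+\delta}{i+j}$ with a specific $\delta$; one then checks $\delta$ is a nonsquare (it will come out proportional to $\varepsilon$ or to a fixed nonsquare coming from the cross term).

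For the case $j=-i$ one argues even more cheaply: $X_{-i}=X_i^{(-1)}$, so $\underline{X_i}\cdot\underline{X_i^{(-1)}}$ is the RDS relation, which is exactly what must be proved. Equivalently, by Lemma~\ref{rdspds} it suffices to know $X_i$ is a semiregular reversible $(q^2,q,q^2,q)$-RDS with forbidden subgroup $Z$; and the $j\neq -i$ computation together with the general structure-constant identities of the cyclotomic $S$-ring $\mathcal{A}=\cyc(K,G)$ pins this down, since $\underline{X_i}\cdot\underline{X_i}=\underline{X_i}\cdot(\underline{X_{-i}})^{\text{with }i\mapsto -i}$ is recovered by applying the automorphism interchanging $i$ and $-i$ (realized by some $\varphi(M)$ or by the inversion map), reducing the diagonal case to the already-handled generic case plus the self-paired value. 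So in practice I would first dispatch $j\neq -i$, deduce the $S$-ring structure constants, and read off the $j=-i$ line as the forced remaining relation.

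\textbf{Main obstacle.} The one genuinely delicate computation is the quadratic-in-a-line-parameter analysis: organizing $\gamma_i(\alpha,\beta)+\gamma_j(\alpha''-\alpha,\beta''-\beta)+\alpha(\beta''-\beta)$ as $A(i,j)\,u^2 + B(i,j,\alpha'',\beta'')\,u + C(\ldots)$ along the line, checking $A(i,j)$ is a nonzero multiple of $i+j$, completing the square to locate the vertex, and verifying that the vertex value, as a function of $(\alpha'',\beta'')$, has exactly the form $\gamma_{\psi(i,j)}(\alpha'',\beta'')$ — this is where the precise shape $\psi(i,j)=\frac{ij+\delta}{i+j}$ and the nonsquareness of $\delta$ must be extracted, and it is easy to slip on the $\frac12$'s and the cross term. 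Everything else is bookkeeping with the Heisenberg multiplication and elementary counting.
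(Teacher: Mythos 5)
Your overall strategy is the paper's: count, for each group element, the number of representations as a product of an element of $X_i$ and an element of $X_j$, reduce that count to a quadratic equation over $\mathbb{F}_q$ whose unique-solution locus pins down $\psi(i,j)=\frac{ij+\delta}{i+j}$. But the "heart of the matter" is mis-framed in a way that, taken literally, gives the wrong multiplicities. Fixing the horizontal data $\alpha+\alpha'=\alpha''$, $\beta+\beta'=\beta''$ does \emph{not} leave a one-parameter family: $(\alpha,\beta)$ still ranges over all of $\mathbb{F}_q^2$ (with $\alpha'=\alpha''-\alpha$, $\beta'=\beta''-\beta$ determined), so the fiber over each $Z$-coset has $q^2$ points, and the $z$-coordinate is a \emph{binary} quadratic whose quadratic part works out to $(i+j)(\alpha^2-\varepsilon\beta^2)$. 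Its level sets are Pell conics: since $\varepsilon$ is a nonsquare the form is anisotropic, so for $i+j\neq 0$ the central value is attained exactly once and every other value exactly $q+1$ times (cf.\ the paper's equation \eqref{pell2} and the $q+1$ solution count for Pell equations). Your "parabola attaining each value $0$, $1$, or $2$ times" is the univariate picture; it would put only $q$ products into each coset, contradicting your own consistency check $q^2=1+(q+1)(q-1)$, which already forces the $1$/$(q{+}1)$ dichotomy. The paper carries out exactly this computation, phrased as structure constants $c_{Y_iY_j}^{Y_k}$ and $c_{Y_iY_j}^{Z^\#}$ of the cyclotomic $S$-ring, with the extra bookkeeping caused by excluding $(\alpha,\beta)=(0,0)$ and $(1,0)$ (which is why its raw constants come out as $q-1,q,q+1$ before adding back the identity to form $X_i=Y_i\cup\{e\}$); if you parametrize $X_i$ itself by all of $\mathbb{F}_q^2$ via $g_i(0,0)=e$, that bookkeeping disappears.

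The $j=-i$ case as you first state it is circular: "$\underline{X_i}\cdot\underline{X_i^{(-1)}}$ is the RDS relation" is precisely the assertion to be proved, and the patch via "the forced remaining relation" is not carried out and is not obviously forced without an argument. The uniform fix is the same solution count: for $i+j=0$ the quadratic part of the $z$-coordinate vanishes, leaving an affine function of $(\alpha,\beta)$ that is identically zero on the fiber over $Z$ (all $q^2$ products land on $e$, none on $Z^\#$) and a nonconstant linear functional on every other fiber (each value attained $q$ times), which is exactly the first line of \eqref{link}. This is what the paper does via the degenerate linear equation \eqref{linear} and the constants \eqref{const1}, \eqref{const3}.
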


\begin{proof}
Firstly, let us calculate the structure constants $c_{Y_iY_j}^T$ of $\mathcal{A}$, where $i,j\in \mathbb{F}_q$ and $T\in \mathcal{S}(\mathcal{A})$. Clearly, $c_{Y_iY_j}^{\{e\}}=q^2-1$ if $j=-i$ and $c_{Y_iY_j}^{\{e\}}=0$ otherwise. Each element from $Y_i$ is of the form $\left(\begin{smallmatrix}
1 & \alpha_1 & \gamma_i(\alpha_1,\beta_1)\\
0 & 1 & \beta_1\\
0 & 0 & 1
\end{smallmatrix}\right)$ and each element from $Y_j$ is of the form $\left(\begin{smallmatrix}
1 & \alpha_2 & \gamma_j(\alpha_2,\beta_2)\\
0 & 1 & \beta_2\\
0 & 0 & 1
\end{smallmatrix}\right)$. So each element entering the product $Y_iY_j$ is of the form $$g=\left(\begin{smallmatrix}
1 & \alpha_1+\alpha_2 & \gamma_i(\alpha_1,\beta_1)+\gamma_j(\alpha_2,\beta_2)+\alpha_1\beta_2\\
0 & 1 & \beta_1+\beta_2\\
0 & 0 & 1
\end{smallmatrix}\right).$$
The structure constant $c_{Y_iY_j}^{Z^\#}$, where $i,j\in \mathbb{F}_q$, is equal to the number of solutions $\alpha_1,\beta_1,\alpha_2,\beta_2$ of the matrix equation $g=\left(\begin{smallmatrix}
1 & 0 & 1\\
0 & 1 & 0\\
0 & 0 & 1
\end{smallmatrix}\right)$ 
 such that 
\begin{equation}\label{zeros}
(\alpha_1,\beta_1)\neq (0,0)~\text{and}~(\alpha_2,\beta_2)\neq (0,0).
\end{equation}
The latter matrix equation is equivalent to the system of equations
\begin{equation}\label{system1}
\begin{cases}
\alpha_2=-\alpha_1,\\
\beta_2=-\beta_1,\\
(\alpha_1^2-\varepsilon \beta_1^2)(i+j)=1.
\end{cases}
\end{equation}
If $i+j=0$, then the third equation in System~\eqref{system1} does not have a solution and hence
\begin{equation}\label{const1}
c_{Y_iY_{-i}}^{Z^\#}=0.
\end{equation}
If $i+j\neq 0$, then the third equation in System~\eqref{system1} is a Pell equation with the nonsquare coefficient $\varepsilon$. Such equation has $q+1$ solutions (see, e.g.,~\cite[Theorem~5.3]{Cohen}). Clearly,~\eqref{zeros} holds for all of these solutions. Therefore
\begin{equation}\label{const2} 
c_{Y_iY_j}^{Z^\#}=q+1,~i+j\neq 0.
\end{equation}

Now given $i,j,k\in \mathbb{F}_q$, let us calculate the structure constant $c_{Y_iY_j}^{Y_k}$. It is equal to the number of solutions $\alpha_1,\beta_1,\alpha_2,\beta_2$ of the matrix equation $g=\left(\begin{smallmatrix}
1 & 1 & k\\
0 & 1 & 0\\
0 & 0 & 1
\end{smallmatrix}\right)$ 
satisfying~\eqref{zeros}. The above matrix equation can be written as the following system of equations:
\begin{equation}\label{system2}
\begin{cases}
\alpha_2=1-\alpha_1,\\
\beta_2=-\beta_1,\\
\frac{\alpha_1\beta_1+\alpha_2\beta_2}{2}+\alpha_1\beta_2+(\alpha_1^2-\varepsilon \beta_1^2)i+(\alpha_2^2-\varepsilon \beta_2^2)j=k.
\end{cases}
\end{equation}
The third equation in System~\eqref{system2} can be reduced using the first and the second equations to the following one
\begin{equation}\label{pell}
\alpha_1^2(i+j)-2\alpha_1j-\frac{\beta_1}{2}-\varepsilon \beta_1^2(i+j)+j-k=0.
\end{equation}
 
Suppose first that $i+j=0$. Then~\eqref{pell} is reduced to
\begin{equation}\label{linear}
\beta_1=2(j-k)-4\alpha_1j.
\end{equation} 
The latter equation and hence System~\eqref{system2} has $q$ solutions. It remains to verify how many of these solutions satisfy~\eqref{zeros} the second part of which is equivalent to $(\alpha_1,\beta_1)\neq (1,0)$ in our case. One can see that $(0,0)$ ($(1,0)$, respectively) is a solution of~\eqref{linear} if and only if $k=j$ ($k=-j=i$, respectively). Note that both $(0,0)$ and $(1,0)$ are solutions of~\eqref{linear} if and only if $k=j=i=0$. Thus,
\begin{equation}\label{const3}
c_{Y_iY_{-i}}^{Y_k}=
\begin{cases}
q,~k\notin\{i,-i\},\\
q-1,~k\in\{i,-i\}~\text{and}~i\neq 0,\\
q-2,~k=i=-i=0.
\end{cases}
\end{equation}

Now let $i+j\neq 0$. Then~\eqref{pell} is equivalent to the next one:
\begin{equation}\label{pell2}
\left(\alpha_1-\frac{j}{i+j}\right)^2-\varepsilon \left(\beta_1+\frac{1}{4(i+j)\varepsilon}\right)^2=\frac{j^2}{(i+j)^2}-\frac{1}{16(i+j)^2\varepsilon}+\frac{k-j}{i+j}.
\end{equation}
Observe that~\eqref{pell2} is a Pell equation with the nonsquare coefficient $\varepsilon$. If the right-hand side of~\eqref{pell2} is equal to~$0$, then this equation has a unique solution $(\alpha_1,\beta_1)=(j(i+j)^{-1},(4(i+j)\varepsilon)^{-1})$ and hence System~\eqref{system2} has a unique solution satisfying~\eqref{zeros}. One can compute that the right-hand side of~\eqref{pell2} is equal to~$0$ if and only if
$$k=\psi(i,j)=\frac{ij+\delta}{i+j},$$
where $\delta=\frac{\varepsilon}{16}$ is a nonsquare. Therefore
\begin{equation}\label{const4}
c_{Y_iY_j}^{Y_{\psi(i,j)}}=1,~i+j\neq 0.
\end{equation}

If the right-hand side of~\eqref{pell2} is not equal to~$0$, then~\eqref{pell2} and hence System~\eqref{system2} has $q+1$ solutions. Let us check how many of these solutions satisfy~\eqref{zeros}. Recall that~\eqref{zeros} is equivalent in our case to $(\alpha_1,\beta_1)\notin \{(0,0),(1,0)\}$. One can verify that $(0,0)$ ($(1,0)$, respectively) is a solution of~\eqref{pell2} if and only if $k=j$ ($k=i$, respectively). Therefore
\begin{equation}\label{const5}
c_{Y_iY_j}^{Y_k}=
\begin{cases}
q+1,~i+j\neq 0~\text{and}~k\notin\{i,j,\psi(i,j)\},\\
q,~i+j\neq 0,~i\neq j,~\text{and}~k\in\{i,j\},\\
q-1,~i+j\neq 0,~k=i=j.
\end{cases}
\end{equation}

Clearly, 
$$\underline{X_i}\cdot\underline{X_j}=(\underline{Y_i}+e)(\underline{Y_j}+e)=(c_{Y_iY_j}^{\{e\}}+1)e+c_{Y_iY_j}^{Z^\#}\underline{Z}^\#+(c_{Y_iY_j}^{Y_i}+1)\underline{Y_i}+(c_{Y_iY_j}^{Y_j}+1)\underline{Y_j}+\sum \limits_{k\in \mathbb{F}_q\setminus\{i,j\}} c_{Y_iY_j}^{Y_k} \underline{Y_k}$$
for all $i,j\in \mathbb{F}_q$. Now~\eqref{link} follows from the above equality,~\eqref{const1},~\eqref{const2},~\eqref{const3},~\eqref{const4}, and~\eqref{const5}.
\end{proof}

The next corollary immediately follows from Proposition~\ref{multiplication}. 

\begin{corl}\label{heissystem}
In the above notations, $\mathcal{L}=\{X_i:~i\in \mathbb{F}_q\}$ is a closed linked system of semiregular RDSs in $G$ with forbidden subgroup $Z$, parameters $(q^2,q,q^2,q,q,1,q+1)$, and pair of characteristic functions $(\chi,\psi)$, where $\chi(i)=-i$ and $\psi(i,j)=\frac{ij+\delta}{i+j}$.
\end{corl}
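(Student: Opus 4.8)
The plan is to obtain the statement as a direct consequence of Proposition~\ref{multiplication} by matching~\eqref{link} term by term with the definition of a closed linked system of semiregular RDSs from Subsection~4.1. First I would set $S=\mathbb{F}_q$ and take as candidate characteristic functions $\chi(i)=-i$ and $\psi(i,j)=\frac{ij+\delta}{i+j}$. The map $i\mapsto -i$ is an involutive bijection of $\mathbb{F}_q$, and for $j\neq -i$ the denominator $i+j$ is nonzero, so $\psi(i,j)$ is a well-defined element of $\mathbb{F}_q$; thus $\psi$ is total on $(S\times S)\setminus D$ with $D=\{(i,-i):i\in\mathbb{F}_q\}=\{(\alpha,\chi(\alpha))\}$, exactly as required. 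I would also recall the identity $X_i^{(-1)}=X_{-i}$ established just before Proposition~\ref{multiplication}, which is the reversibility condition $X_\alpha^{(-1)}=X_{\chi(\alpha)}$.

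Next I would read off the two cases of~\eqref{link}. For $j=-i=\chi(i)$ it gives $\underline{X_i}\cdot\underline{X_i}^{(-1)}=\underline{X_i}\cdot\underline{X_{-i}}=q^2 e+q(\underline{G\setminus Z})=q^2 e+q(\underline{G}-\underline{Z})$; together with $|X_i|=q^2$, $|G:Z|=q^2$, $|Z|=q$, this shows every $X_i$ is an RDS relative to $Z$ with parameters $(q^2,q,q^2,q)$. Semiregularity is immediate since $X_i=Y_i\cup\{e\}$ is a transversal for $Z$ in $G$ by part~(2) of Lemma~\ref{orbits} (equivalently $k=q^2=m$). For $j\neq\chi(i)$ the second case of~\eqref{link} reads $\underline{X_i}\cdot\underline{X_j}=\underline{X_{\psi(i,j)}}+(q+1)(\underline{G\setminus X_{\psi(i,j)}})$, which is precisely~\eqref{linked} with $\mu=1$ and $\nu=q+1$. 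Since $s=|S|=q\geq 2$, all the conditions of the definition are verified, so $\mathcal{L}$ is a closed linked system with parameters $(q^2,q,q^2,q,q,1,q+1)$ and pair of characteristic functions $(\chi,\psi)$.

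I do not expect a genuine obstacle here, because Proposition~\ref{multiplication} already contains all the structural content; the only care needed is bookkeeping — confirming that the constants $k,\lambda,\mu,\nu$ extracted from~\eqref{link} match the normalization in~\eqref{linked} (in particular that $\underline{G\setminus Z}=\underline{G}-\underline{Z}$ and $\underline{G\setminus X_{\psi(i,j)}}=\underline{G}-\underline{X_{\psi(i,j)}}$), that $\psi$ is defined on the whole of $(S\times S)\setminus D$, and that $\chi$ is a bijection with $\chi\circ\chi=\id$. With these routine checks in place the corollary follows.
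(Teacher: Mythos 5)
Your proposal is correct and follows exactly the paper's route: the paper also derives Corollary~\ref{heissystem} as an immediate consequence of Proposition~\ref{multiplication}, and your bookkeeping (the involution $\chi(i)=-i$ matching $X_i^{(-1)}=X_{-i}$, semiregularity from Lemma~\ref{orbits}(2), and reading off $\mu=1$, $\nu=q+1$ from~\eqref{link}) is precisely the verification the paper leaves implicit.
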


Note that the set $X_0$ is a special case of a reversible RDS from~\cite[Theorem~1.3]{CHL}. Since $X_0$ is a reversible RDS with forbidden subgroup $Z$ and parameters $(q^2,q,q^2,q)$, Lemma~\ref{rdspds} implies that $X_0^\#\cup Z^\#$ is a reversible PDS with parameters~$(q^3,q^2+q-2,q-2,q+2)$. So we obtain the next corollary.

\begin{corl}\label{pdsheis}
Let $q$ be an odd prime power. There is a reversible PDS with parameters~$(q^3,q^2+q-2,q-2,q+2)$ in a Heisenberg group of dimension~$3$ over $\mathbb{F}_q$. 
\end{corl}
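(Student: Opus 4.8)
The plan is to apply Lemma~\ref{rdspds} to the set $X = X_0$ from Corollary~\ref{heissystem}, taken with the forbidden subgroup $N = Z$. We first need to check that $X_0$ meets all the hypotheses of that lemma: it must contain the identity $e$, be reversible, and be a semiregular reversible $(n\lambda,n,n\lambda,\lambda)$-RDS. By construction $X_0 = Y_0 \cup \{e\}$, so $e \in X_0$; we observed $Y_0^{(-1)} = Y_0$, hence $X_0^{(-1)} = X_0$, so $X_0$ is reversible; and Corollary~\ref{heissystem} tells us $X_0$ is a semiregular RDS in $G$ with forbidden subgroup $Z$ and parameters $(q^2, q, q^2, q)$. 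This is exactly the shape $(n\lambda, n, n\lambda, \lambda)$ with $n = q$ and $\lambda = q$, so in particular $\lambda = n$, which is the extra condition needed for the final clause of Lemma~\ref{rdspds}.

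Granting these checks, Lemma~\ref{rdspds} with $n = \lambda = q$ immediately yields that $S = X_0^\# \cup Z^\#$ is a reversible PDS in $G$ with parameters $(n^3, n^2 + n - 2, n - 2, n + 2) = (q^3, q^2 + q - 2, q - 2, q + 2)$, and that $\cay(G, S)$ is strongly regular. Since $G$ is precisely a Heisenberg group of dimension $3$ over $\mathbb{F}_q$ (the running assumption of this section) and $q$ is an odd prime power, this is the assertion of Corollary~\ref{pdsheis}. So the ``proof'' is really just an invocation: verify that Corollary~\ref{heissystem} supplies a reversible semiregular RDS with $\lambda = n$, then quote Lemma~\ref{rdspds}.

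There is essentially no obstacle here; the only thing to be careful about is bookkeeping. One should double check that the reference to~\cite[Theorem~1.3]{CHL} (or the fact that $X_0$ is reversible, already established just before Corollary~\ref{heissystem}) is what legitimizes applying the $\lambda = n$ clause of Lemma~\ref{rdspds}, since that clause requires a \emph{reversible} RDS, and among the $X_i$ only $X_0$ is reversible. The parameter substitution $n \mapsto q$, $\lambda \mapsto q$ into $(n^3, n^2+n-2, n-2, n+2)$ is routine. Thus the whole argument is a one-line deduction, and I would present it as such in the text preceding the corollary (as the excerpt in fact already does), with the formal proof reading: ``This follows from Corollary~\ref{heissystem} and Lemma~\ref{rdspds} applied to $X_0$ with $n = \lambda = q$.''
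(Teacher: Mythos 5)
Your argument is exactly the paper's: the text immediately preceding Corollary~\ref{pdsheis} derives it by applying Lemma~\ref{rdspds} to the reversible semiregular RDS $X_0$ with forbidden subgroup $Z$ and parameters $(q^2,q,q^2,q)$, i.e.\ $n=\lambda=q$. Your hypothesis checks (identity in $X_0$, reversibility via $Y_0^{(-1)}=Y_0$, the $\lambda=n$ condition) are all correct, so the proposal matches the paper's proof.
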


Since a Heisenberg group is a central product of Heisenberg groups of dimension~$3$, the corollary below which is a refinement of Theorem~\ref{main2} immediately follows from Proposition~\ref{product} and Corollary~\ref{heissystem}. Observe that parameters $\mu$ and $\nu$ can be computed recurrently by formulas from Proposition~\ref{product} or by~\eqref{munu-reg} from $m$, $n$, and $k$.

\begin{corl}\label{heis2r}
Let $q$ be an odd prime power, $r\geq 1$, and $G$ a Heisenberg group of dimension~$2r+1$ over $\mathbb{F}_q$. There is a closed linked system $\mathcal{L}=\{X_i:~i\in \mathbb{F}_q\}$ of semiregular RDSs in $G$ with forbidden subgroup $Z(G)$, parameters~
$$(q^{2r},q,q^{2r},q^{2r-1},q,q^{2r-1}-q^r+q^{r-1},q^{2r-1}+q^{r-1}),$$
and pair of characteristic functions $(\chi,\psi)$, where $\chi(i)=-i$ and $\psi(i,j)=\frac{ij+\delta}{i+j}$ for a nonsquare $\delta\in \mathbb{F}_q$. 
\end{corl}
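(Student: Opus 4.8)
The plan is to derive Corollary~\ref{heis2r} from the $r=1$ case, which is Corollary~\ref{heissystem}, by iterating the product construction of Proposition~\ref{product}. First I would recall the structural fact, already noted in the text, that the Heisenberg group of dimension~$2r+1$ over $\mathbb{F}_q$ is a central product of $r$ copies of the $3$-dimensional Heisenberg group $G_0$, all amalgamated over their common center $Z\cong C_q$; that is, $G\cong G_0 \ast G_0 \ast \cdots \ast G_0$ with $Z(G)=Z$. This is the decomposition into which Proposition~\ref{product} feeds: writing $G = G_1 G_2$ with $G_1$ a Heisenberg group of dimension $2(r-1)+1$ and $G_2 \cong G_0$, we have $G_1 \cap G_2 = Z \leq Z(G)$, exactly the hypothesis of that proposition.

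Next I would set up the induction on $r$. The base case $r=1$ is Corollary~\ref{heissystem}: a closed linked system $\{X_i : i\in\mathbb{F}_q\}$ of semiregular RDSs in $G_0$ relative to $Z$, with parameters $(q^2,q,q^2,q,q,1,q+1)$ and characteristic functions $\chi(i)=-i$, $\psi(i,j)=\frac{ij+\delta}{i+j}$. For the inductive step, I would assume the statement holds for $r-1$, giving a closed linked system $\mathcal{L}_1$ in $G_1$ with the same index set $\mathbb{F}_q$, the same pair $(\chi,\psi)$, and parameters $(q^{2(r-1)},q,q^{2(r-1)},q^{2r-3},q,\mu_1,\nu_1)$; simultaneously I would take $\mathcal{L}_2 = \{X_i\}$ in $G_2\cong G_0$ from the base case, also indexed by $\mathbb{F}_q$ with the same $(\chi,\psi)$ and parameters with $\lambda_2 = q$. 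Since the two linked systems share the index set, the forbidden subgroup $Z$, and the pair of characteristic functions, Proposition~\ref{product} applies directly with the automorphism $f=\id_{S^\infty}\in\aut(S^\infty)$, yielding a closed linked system $\mathcal{L}=\{X_\alpha Y_\alpha : \alpha\in\mathbb{F}_q\}$ in $G$ with the same index set and the same pair $(\chi,\psi)$, and with $\lambda = q\cdot q^{2r-3}\cdot q = q^{2r-1}$ and the remaining parameters $m=k=q^{2r}$, $n=q$, $s=q$.

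It then remains to pin down the numerical parameters $\mu$ and $\nu$. Rather than unwind the recursion $\mu = \mu_1\mu_2 + (n-1)\nu_1\nu_2$, $\nu = \mu_1\nu_2 + \mu_2\nu_1 + (n-2)\nu_1\nu_2$ by hand, I would invoke formula~\eqref{munu-reg}: the RDSs in $\mathcal{L}$ are semiregular with $m = q^{2r}= (q^r)^2$, so $\ell = q^r$, and $n=q$, giving
\begin{equation*}
\mu = \frac{1}{q}\left(q^{2r} \pm (q-1)q^r\right), \qquad \nu = \frac{1}{q}\left(q^{2r}\mp q^r\right).
\end{equation*}
To fix the sign I would use the base case ($r=1$: $\mu=1$, $\nu=q+1$) together with the recursion to check, by an easy induction, that the correct branch is the one with $-(q-1)q^r$ in $\mu$ and $+q^r$ in $\nu$; this gives $\mu = q^{2r-1} - q^r + q^{r-1}$ and $\nu = q^{2r-1} + q^{r-1}$, as claimed. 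The only genuinely delicate point is the sign determination in~\eqref{munu-reg}, but since $\mu,\nu$ must be nonnegative integers and the $r=1$ values are already known, the ambiguity is resolved immediately; everything else is a direct appeal to Proposition~\ref{product} and the central-product decomposition of the Heisenberg group. Finally, the characteristic functions are transported unchanged through Proposition~\ref{product}, so $\chi(i)=-i$ and $\psi(i,j)=\frac{ij+\delta}{i+j}$ persist for all $r$.
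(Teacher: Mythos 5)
Your strategy coincides with the paper's: decompose the Heisenberg group of dimension $2r+1$ as a central product of $r$ copies of the $3$-dimensional one, feed Corollary~\ref{heissystem} into Proposition~\ref{product} inductively (with $f=\id_{S^\infty}$), and read off the parameters, using~\eqref{munu-reg} for $\mu$ and $\nu$. Everything up to the determination of $\mu$ and $\nu$ is fine and is exactly the paper's (one-line) argument.

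The gap sits precisely at the point you yourself flag as the only delicate one: the choice of sign in~\eqref{munu-reg}. Your claim that ``an easy induction'' confirms the branch $\mu=q^{2r-1}-q^{r}+q^{r-1}$, $\nu=q^{2r-1}+q^{r-1}$ for all $r$ is not correct. The cleanest way to see this is to note that the recursion of Proposition~\ref{product} gives
$$\mu-\nu=\bigl(\mu_1\mu_2+(n-1)\nu_1\nu_2\bigr)-\bigl(\mu_1\nu_2+\mu_2\nu_1+(n-2)\nu_1\nu_2\bigr)=(\mu_1-\nu_1)(\mu_2-\nu_2),$$
so the quantity $\mu-\nu=\pm\sqrt{m}$ is multiplicative under the product. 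Each $3$-dimensional factor contributes $\mu_0-\nu_0=1-(q+1)=-q$, hence the $r$-fold product has $\mu-\nu=(-q)^r=(-1)^rq^r$, i.e.\ the branch of~\eqref{munu-reg} alternates with the parity of $r$. Concretely, for $r=2$ the recursion yields
$$\mu=1+(q-1)(q+1)^2=q^{3}+q^{2}-q,\qquad \nu=2(q+1)+(q-2)(q+1)^2=q^{3}-q,$$
the opposite branch to the one you assert (whose $r=2$ values would be $q^3-q^2+q$ and $q^3+q$). In general the product construction produces $\mu=q^{2r-1}+(-1)^r(q^{r}-q^{r-1})$ and $\nu=q^{2r-1}-(-1)^rq^{r-1}$, which agrees with the stated closed forms only for odd $r$. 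In particular the ambiguity is not ``resolved immediately'' by nonnegativity together with the $r=1$ case: both branches consist of nonnegative integers for every $r$, and the sign genuinely flips at each step. (This discrepancy is in fact already present in the statement of Corollary~\ref{heis2r} and Theorem~1 as printed, whose proof is the same appeal to Proposition~\ref{product}; but since your proposal explicitly rests on the inductive sign check, and that check fails for even $r$, the step as written does not go through and the parameter claim needs to be corrected for even $r$.)
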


According to Proposition~\ref{function}, one can construct the group $\mathbb{F}_q^\infty$ associated with $\mathcal{L}$ on the set $\mathbb{F}_q\cup \{\infty\}$, where $\infty$ is the identity element and the unary operation $\widehat{\chi}$ and the binary operation $\widehat{\psi}$ are constructed from $\chi(i)=-i$ and $\psi(i,j)=\frac{ij+\delta}{i+j}$, respectively. Observe that the set $\mathbb{F}_q^\infty$ can be considered also as a projective line $\PL(q)$ (see~\cite[p.~50]{Wilson}).

\begin{lemm}\label{cyclgroup}
The group $\mathbb{F}_q^\infty=\mathbb{F}_q\cup \{\infty\}$ associated with $\mathcal{L}$ is a cyclic group of order~$q+1$. 
\end{lemm}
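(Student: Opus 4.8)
The plan is to identify the group operation $\widehat{\psi}$ on $\mathbb{F}_q^\infty = \mathbb{F}_q \cup \{\infty\}$ with the multiplicative group $\mathbb{F}_{q^2}^\times / \mathbb{F}_q^\times$, which is cyclic of order $q+1$. Concretely, I would fix a nonsquare $\varepsilon \in \mathbb{F}_q$ so that $\mathbb{F}_{q^2} = \mathbb{F}_q(\sqrt{\varepsilon})$, and map $i \in \mathbb{F}_q$ to the class of $i + \sqrt{\varepsilon}$ (or a suitably normalized version) and $\infty$ to the class of $1 \in \mathbb{F}_q^\times$, i.e. the identity. The key point is that $\mathbb{F}_{q^2}^\times / \mathbb{F}_q^\times$ has order $(q^2-1)/(q-1) = q+1$, and every finite subgroup of the multiplicative group of a field is cyclic, so this quotient is cyclic of order $q+1$.

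The main verification is that this bijection intertwines $\widehat{\psi}$ with multiplication in the quotient. First I would check that the elements $i + \sqrt{\varepsilon}$ for $i \in \mathbb{F}_q$, together with $1$, form a transversal for $\mathbb{F}_q^\times$ in $\mathbb{F}_{q^2}^\times$: indeed, $\mathbb{F}_{q^2}^\times = \mathbb{F}_q^\times \cup \bigcup_{i} \mathbb{F}_q^\times (i + \sqrt{\varepsilon})$ since any element $a + b\sqrt{\varepsilon}$ with $b \neq 0$ equals $b(b^{-1}a + \sqrt{\varepsilon})$, and the $\mathbb{F}_q$-part $1$ covers $b = 0$. So the quotient has exactly $q+1$ classes, represented by $\infty \leftrightarrow [1]$ and $i \leftrightarrow [i + \sqrt{\varepsilon}]$. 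Next, for $i + j \neq 0$ I would compute $(i + \sqrt{\varepsilon})(j + \sqrt{\varepsilon}) = (ij + \varepsilon) + (i+j)\sqrt{\varepsilon} = (i+j)\left(\frac{ij+\varepsilon}{i+j} + \sqrt{\varepsilon}\right)$, so in the quotient $[i+\sqrt\varepsilon]\,[j+\sqrt\varepsilon] = \left[\frac{ij+\varepsilon}{i+j} + \sqrt\varepsilon\right]$. This matches $\psi(i,j) = \frac{ij+\delta}{i+j}$ provided $\delta = \varepsilon$; since $\delta$ in Corollary~\ref{heis2r} is a nonsquare, I would either choose $\varepsilon = \delta$ from the start, or note that changing the nonsquare rescales the representatives by an element of $\mathbb{F}_q^\times$ and hence only permutes the labelling, which does not affect the isomorphism type. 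For the remaining cases: when $i + j = 0$, $(i+\sqrt\varepsilon)(-i+\sqrt\varepsilon) = \varepsilon - i^2 \in \mathbb{F}_q^\times$, so $[i+\sqrt\varepsilon][-i+\sqrt\varepsilon] = [1] = \infty$, matching $\widehat{\chi}(i) = -i = \chi(i)$; and multiplication by $[1]$ is the identity, matching the rules $\widehat{\psi}(\infty, \alpha) = \alpha = \widehat{\psi}(\alpha,\infty)$.

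Having established that the bijection $\mathbb{F}_q^\infty \to \mathbb{F}_{q^2}^\times/\mathbb{F}_q^\times$ sending $\infty \mapsto [1]$, $i \mapsto [i + \sqrt\varepsilon]$ is a group isomorphism, the conclusion is immediate: $\mathbb{F}_{q^2}^\times$ is cyclic of order $q^2 - 1$, its quotient by the subgroup $\mathbb{F}_q^\times$ of order $q-1$ is cyclic of order $q+1$, hence $\mathbb{F}_q^\infty$ is cyclic of order $q+1$. Alternatively, one can invoke the fact that $\mathcal{M} \cong \mathbb{F}_{q^2}^\times$ acts on the basic sets $Y_i$ (and $Z^\#$) and the induced action exhibits the same quotient structure, but the direct field-arithmetic computation above is cleaner and self-contained.

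The main obstacle I anticipate is purely bookkeeping: matching the specific normalization of the representatives (the factor $\tfrac12$ appearing in $\gamma_i(\alpha,\beta)$ and the resulting $\delta = \varepsilon/16$ in Proposition~\ref{multiplication}) with the clean choice $i + \sqrt\varepsilon$, and being careful that $q$ is odd so that $2$ and $4$ are invertible and $\varepsilon/16$ is indeed a nonsquare iff $\varepsilon$ is. Once one commits to taking $\delta$ itself as the defining nonsquare for the field extension $\mathbb{F}_{q^2} = \mathbb{F}_q(\sqrt\delta)$, all the formulas align without any hidden scaling, and the proof reduces to the short computation $(i + \sqrt\delta)(j + \sqrt\delta) = (i+j)\bigl(\tfrac{ij+\delta}{i+j} + \sqrt\delta\bigr)$ together with the standard fact that finite multiplicative subgroups of fields are cyclic.
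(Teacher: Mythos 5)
Your proposal is correct and is essentially the paper's own argument: the paper realizes $\mathbb{F}_{q^2}^\times$ as the matrix group $\mathcal{M}(\delta)$ and quotients by the scalar subgroup $\mathcal{M}_0\cong\mathbb{F}_q^\times$, using the transversal of cosets of $\left(\begin{smallmatrix} i & 1\\ \delta & i\end{smallmatrix}\right)$, which is exactly your representative $i+\sqrt{\delta}$ written in matrix form. The key verification $(i+\sqrt{\delta})(j+\sqrt{\delta})=(i+j)\bigl(\tfrac{ij+\delta}{i+j}+\sqrt{\delta}\bigr)$ is the same computation the paper performs with matrices, and your handling of the cases $i+j=0$ and the identity coset matches as well.
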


\begin{proof}
Recall that the set 
$$\mathcal{M}(\delta)=\{\left(\begin{smallmatrix}
\alpha & \beta \\
\delta\beta & \alpha \\
\end{smallmatrix}\right):~\alpha,\beta\in \mathbb{F}_q,~(\alpha,\beta)\neq (0,0)\}$$ 
with standard matrix multiplication is isomorphic to the multiplicative group $\mathbb{F}_{q^2}^\times\cong C_{q^2-1}$. Put 
$$\mathcal{M}_0=\{\left(\begin{smallmatrix}
\alpha & 0 \\
0 & \alpha \\
\end{smallmatrix}\right):~\alpha\in \mathbb{F}_q\setminus\{0\}\}.$$
Clearly, $\mathcal{M}_0$ is a subgroup of $\mathcal{M}(\delta)$ of order~$q-1$ and hence $\mathcal{M}(\delta)/\mathcal{M}_0 \cong C_{q+1}$. To prove the lemma, it is enough to verify that $\mathbb{F}_q^\infty$ is isomorphic to $\mathcal{M}(\delta)/\mathcal{M}_0$ (with the standard multiplication induced from the matrix multiplication in $\mathcal{M}$). Note that the set 
$$\mathcal{T}=\{\left(\begin{smallmatrix}
\alpha & 1 \\
\delta & \alpha \\
\end{smallmatrix}\right):~\alpha\in \mathbb{F}_q\}\cup \{I\}$$
is a transversal for $\mathcal{M}_0$ in $\mathcal{M}(\delta)$. Let $f:\mathbb{F}_q^\infty\rightarrow \mathcal{M}(\delta)/\mathcal{M}_0$ such that
$$f(i)=\mathcal{M}_0\left(\begin{smallmatrix}
i & 1 \\
\delta & i \\
\end{smallmatrix}\right),~i\in \mathbb{F}_q,~\text{and}~f(\infty)=\mathcal{M}_0.$$
Since $\mathcal{T}$ is a transversal for $\mathcal{M}_0$ in $\mathcal{M}(\delta)$, the mapping $f$ is a bijection. Given $i,j \in \mathbb{F}_q$ such that $i+j\neq 0$, we have
$$f(\widehat{\psi}(i,j))=\mathcal{M}_0\left(\begin{smallmatrix}
\frac{ij+\delta}{i+j} & 1 \\
\delta & \frac{ij+\delta}{i+j} \\
\end{smallmatrix}\right)=\mathcal{M}_0\left(\begin{smallmatrix}
ij+\delta & i+j \\
\delta(i+j) & ij+\delta \\
\end{smallmatrix}\right)=(\mathcal{M}_0\left(\begin{smallmatrix}
i & 1 \\
\delta & i \\
\end{smallmatrix}\right))(\mathcal{M}_0\left(\begin{smallmatrix}
j & 1 \\
\delta & j \\
\end{smallmatrix}\right))=f(i)f(j),$$
$$f(\widehat{\psi}(i,-i))=f(\infty)=\mathcal{M}_0=(\mathcal{M}_0\left(\begin{smallmatrix}
i & 1 \\
\delta & i \\
\end{smallmatrix}\right))(\mathcal{M}_0\left(\begin{smallmatrix}
-i & 1 \\
\delta & -i \\
\end{smallmatrix}\right))=f(i)f(-i),$$
$$f(\widehat{\psi}(\infty,\infty))=f(\infty)=\mathcal{M}_0=f(\infty)f(\infty),$$
$$f(\widehat{\psi}(i,\infty))=f(i)=(\mathcal{M}_0\left(\begin{smallmatrix}
i & 1 \\
\delta & i \\
\end{smallmatrix}\right))\mathcal{M}_0=f(i)f(\infty),$$
and
$$f(\widehat{\psi}(\infty,i))=f(i)=\mathcal{M}_0(\mathcal{M}_0\left(\begin{smallmatrix}
i & 1 \\
\delta & i \\
\end{smallmatrix}\right))=f(\infty)f(i).$$
Thus, $f$ is an isomorphism from $\mathbb{F}_q^\infty$ to $\mathcal{M}(\delta)/\mathcal{M}_0$ and hence the group $\mathbb{F}_q^\infty$ associated with $\mathcal{L}$ is isomorphic to $C_{q+1}$.
\end{proof}

\section{RDSs in an extraspecial $p$-group of exponent~$p^2$}

The main goal of this section is to prove Theorem~\ref{main1}. To do this, we construct semiregular RDSs in an extraspecial group of order~$p^3$ and exponent~$p^2$ as fusions of some $S$-ring in which every basic set outside the elementary abelian subgroup of order~$p^2$ corresponds to an RDS (see Proposition~\ref{p24}). In fact, a PDS constructed in~\cite{Swartz} is also a fusion of the above $S$-ring. We use this observation for required computations. In the end of the section, we consider the case of $2$-groups.

\subsection{Case of odd~$p$.}\label{p-odd}

Let $p$ be an odd prime and $G\cong M_{p^3}$ an extraspecial group of order~$p^3$ and exponent $p^2$. Then $G$ can be presented as a semidirect product $G=\langle x \rangle \rtimes \langle y \rangle$, where $|x|=p^2$, $|y|=p$, and $yxy^{-1}=xz$ for $z=x^p$. Put $Y=\langle y \rangle$ and $Z=\langle z \rangle$. Clearly, $Z(G)=Z$ and $Y\times Z\cong C_p\times C_p$. Note that each element $g$ of $G$ can be presented in a unique way in the form $g=x^\alpha y^\beta z^\gamma$, where $\alpha\in \{0,\xi^0,\ldots,\xi^{p-2}\}$, $\xi$ is the $p$th power of a primitive root modulo $p^2$, and $\beta,\gamma\in \{0,\ldots,p-1\}$.

There are $\varphi,\psi\in \aut(G)$ such that 
$$\sigma:(x,y,z)\mapsto (xyz^{\frac{p+1}{2}},y,z)~\text{and}~\tau:(x,y,z)\mapsto (x^{\xi},y,z^{\eta(\xi)}),$$
where $\eta=\eta(\xi)\in \mathbb{Z}_p$ is such that $\eta \equiv\xi \mod~p$ (see~\cite{Winter}). One can check in a straightforward way that $|\sigma|=p$, $|\tau|=p-1$, and $\sigma^\tau=\sigma^{\eta}$. Therefore the group $K=\langle \sigma \rangle \rtimes \langle \tau \rangle$ is a Frobenius group of order~$p(p-1)$ (see also~\cite{Swartz}). In fact, $K\cong \out(G)$ due to~\cite{Winter}. Put $\mathcal{A}=\cyc(K,G)$.

\begin{lemm}\label{p21}
The set $\mathcal{S}(\mathcal{A})$ consists of the following sets:
$$\{y^i\},~Z^\#y^i,~X_i=X_0y^i,~i\in\{0,\ldots,p-1\},$$
where
$$X_0=\{x^\alpha y^\beta z^{\frac{\eta(\alpha)\beta}{2}}:~\alpha\in \{\xi^0,\ldots,\xi^{p-2}\},~\beta\in\{0,\ldots,p-1\}\}.$$
\end{lemm}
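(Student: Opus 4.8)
The plan is to describe the cyclotomic $S$-ring $\mathcal{A}=\cyc(K,G)$ by computing the orbits of $K=\langle\sigma\rangle\rtimes\langle\tau\rangle$ on $G$ directly. First I would record the action of the two generators on a general element written in normal form $g=x^\alpha y^\beta z^\gamma$. For $\tau$ we have $\tau:(x,y,z)\mapsto(x^\xi,y,z^\eta)$, so $\tau(x^\alpha y^\beta z^\gamma)=x^{\xi\alpha}y^\beta z^{\eta\gamma}$ (using that $\tau$ is an automorphism and that $x^\alpha$ is again a power of $x$). For $\sigma:(x,y,z)\mapsto(xyz^{(p+1)/2},y,z)$ one has to push $\sigma$ through the normal form; since $\sigma$ fixes $y$ and $z$ and sends $x\mapsto xyz^{(p+1)/2}$, iterating gives $\sigma^t(x)=x y^t z^{c_t}$ for an explicit $c_t$ obtained by collecting the commutator corrections (here $(p+1)/2$ is chosen precisely so $\sigma$ has order $p$, i.e. $\sigma^p=\id$). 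The key structural facts I would then extract: (a) $Y=\langle y\rangle$ is fixed pointwise by $\tau$ and $\sigma$ fixes $y$, so each $\{y^i\}$ is a $K$-orbit (this is immediate once one checks $\sigma$ and $\tau$ both fix $y$); (b) $Z=\langle z\rangle$ is $K$-invariant, $\sigma$ fixes $Z$ pointwise, and $\tau$ acts on $Z^\#$ as multiplication by $\eta$, which is a generator of $\mathbb{Z}_p^\times$ since $\xi$ is a $p$-th power of a primitive root mod $p^2$ and hence $\eta\equiv\xi$ is a primitive root mod $p$; therefore $Z^\#$ is a single $K$-orbit; and more generally $Z^\# y^i$ is $K$-invariant and, because $\tau$ fixes $y^i$ and acts transitively on $Z^\#$, it is a single orbit of size $p-1$.

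Next I would handle the remaining elements, those of the form $x^\alpha y^\beta z^\gamma$ with $\alpha\neq 0$. The claim is that the orbit of $x^\alpha y^0 z^0$ (say $\alpha=\xi^0=1$, or more to the point the orbit containing $x$) is exactly the set $X_0=\{x^\alpha y^\beta z^{\eta(\alpha)\beta/2}:\ \alpha\in\{\xi^0,\dots,\xi^{p-2}\},\ \beta\in\{0,\dots,p-1\}\}$. To see containment $\subseteq$: starting from $x$, applying $\tau^j$ gives $x^{\xi^j}$ (with $\gamma=0=\eta(\xi^j)\cdot 0/2$), and then applying $\sigma^t$ to $x^{\xi^j}=(x^\xi)^{\xi^{j-1}}$... — more carefully, one computes $\sigma^t(x^\alpha)$ for $\alpha$ a power of $x$ and checks the $z$-exponent comes out to $\eta(\alpha)\cdot t/2$ modulo $p$; this is the crucial commutator bookkeeping where the factor $1/2$ in $F$ and in $X_0$ enters, and $\alpha$ must be interpreted via $\eta(\alpha)$ (reduction mod $p$) when it appears in an exponent of $z$. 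One then verifies $|X_0|=p(p-1)$ (each of the $p-1$ choices of $\alpha$ and $p$ choices of $\beta$ gives a distinct group element, since $\alpha$ determines the $x$-part and then $\beta$ the $y$-part and then $\gamma$ is forced), which matches the orbit size $|K|=p(p-1)$ provided the stabilizer of $x$ is trivial; triviality of the stabilizer follows because $\tau^j$ fixes $x$ only if $\xi^j\equiv 1$, forcing $j\equiv 0$, and then $\sigma^t(x)=x$ forces $t\equiv 0 \bmod p$. Finally, the translates $X_i=X_0 y^i$ are obtained by noting $K$ commutes appropriately with right-multiplication by $y^i$ in the sense that $X_0 y^i$ is again a $K$-orbit — concretely, $\sigma$ and $\tau$ fix $y$, and one checks $\varphi(g y^i)=\varphi(g)y^i$ for $\varphi\in K$ by a direct computation in normal form (this uses that $y^i$ is central-ish modulo the twisting already accounted for); hence each $X_i$ is an orbit of size $p(p-1)$, and there are $p$ of them, accounting for all $p\cdot p(p-1)=p^2(p-1)$ elements with $\alpha\neq 0$.

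For the final count I would tally: the orbits $\{y^i\}$ contribute $p$ orbits covering $p$ elements; the orbits $Z^\# y^i$ contribute $p$ orbits covering $p(p-1)$ elements; the orbits $X_i$ contribute $p$ orbits covering $p^2(p-1)$ elements; total $p+p(p-1)+p^2(p-1)=p+p^2-p+p^3-p^2=p^3=|G|$, so these are exactly all the $K$-orbits and $\mathcal{S}(\mathcal{A})$ is as claimed. The main obstacle I expect is the explicit commutator computation of $\sigma^t(x^\alpha)$ — getting the $z$-exponent to come out as exactly $\eta(\alpha)\beta/2 \bmod p$ requires careful use of the relation $yxy^{-1}=xz$ (equivalently $y x^\alpha y^{-1} = x^\alpha z^{\alpha}$ in the appropriate sense, and the standard formula $(ab)^n = a^n b^n [b,a]^{\binom n2}$ in a class-$2$ group, which is where $\binom{t}{2}$ and hence, after absorbing, the factor $1/2$ appears) and the consistency condition $\sigma^p=\id$ that pins down the constant $(p+1)/2$; the rest is bookkeeping of orbit sizes via triviality of point stabilizers.
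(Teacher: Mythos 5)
Your proposal is correct and follows essentially the same route as the paper: both arguments simply compute the orbit partition of $K$ on $G$, identify $\{y^i\}$, $Z^\#y^i$ and $X_i=X_0y^i$ as the orbits, and tally sizes to confirm a partition of all $p^3$ elements. The only difference is one of sourcing rather than substance: the paper cites Swartz's Lemma~1(v) for the fact that $X_0$ is a $K$-orbit and invokes the general $S$-ring fact that a basic set times a singleton basic set is basic to handle the translates $X_0y^i$ and $Z^\#y^i$, whereas you carry out the commutator bookkeeping for $\sigma^t(x^\alpha)$ explicitly (your claimed $z$-exponent $\eta(\alpha)\beta/2$ with $\beta=t\eta(\alpha)$ does check out) and argue directly that the $K$-action commutes with right translation by $y^i$.
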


\begin{proof}
Since $y$ is fixed by $\sigma$ and $\tau$, each element of $K$ fixes $y$ and hence $y^i$ for every $i\in\{0,\ldots,p-1\}$. So $\{y^i\}\in \mathcal{S}(\mathcal{A})$. Observe that $\langle \tau \rangle$ acts transitively on $Z^\#$ because $\eta=\eta(\xi)$ is a primitive root modulo~$p$. Therefore $Z^\#\in \mathcal{S}(\mathcal{A})$. The set $X_0$ is an orbit of $K$ and hence a basic set of $\mathcal{A}$ by~\cite[Lemma~1(v)]{Swartz} (this also can be verified explicitly using the definitions of $G$ and $K$). Since $X_0,Z^\#,\{y^i\}\in \mathcal{S}(\mathcal{A})$, we obtain $X_i=X_0y^i,Z^\#y^i\in \mathcal{S}(\mathcal{A})$. Clearly, all the sets $Z^\#y^i$ are pairwise disjoint. One can see that $Y\nleq \rad(X_0)$ and hence all the sets $X_i$ are also pairwise disjoint. This implies that the sets $\{y^i\}$, $Z^\#y^i$, $X_i$ form a partition of $G$ and we are done.
\end{proof}

\begin{lemm}\label{p22}
For every $i\in \{0,\ldots,p-1\}$, the following statements hold:
\begin{enumerate}

\tm{1} $X_i=X_i^{(-1)}$;

\tm{2} $Y_i=X_i\cup Y$ and $Z_i=X_i\cup Z$ are (both left and right) transversals in $G$ for $Z$ and $Y$, respectively.
\end{enumerate}
\end{lemm}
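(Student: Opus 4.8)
The plan is to work with the explicit coordinate description of $G$ and of $X_0$ given in Lemma~\ref{p21}, and to reduce everything to elementary computations in the exponents. Recall that $X_0=\{x^\alpha y^\beta z^{\eta(\alpha)\beta/2}:\alpha\in\{\xi^0,\ldots,\xi^{p-2}\},\ \beta\in\{0,\ldots,p-1\}\}$ and $X_i=X_0y^i$, so a generic element of $X_i$ is $x^\alpha y^\beta z^{\eta(\alpha)\beta/2}y^i=x^\alpha y^{\beta+i}z^{\eta(\alpha)\beta/2}$ (using $y^i$ central modulo... no: $y$ is \emph{not} central, but $z$ is, and $y^i$ commutes with $y^\beta$, so this rearrangement is legitimate). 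For part~(1), I would compute $(x^\alpha y^\beta z^{\eta(\alpha)\beta/2})^{-1}$ directly. Using $yxy^{-1}=xz$ one gets $y^\beta x^{-\alpha}y^{-\beta}=x^{-\alpha}z^{-\alpha\beta}$, hence the inverse equals $x^{-\alpha}y^{-\beta}z^{\alpha\beta-\eta(\alpha)\beta/2}$; writing $-\alpha=\xi^{(p-2)/2}\cdot(\ldots)$ — more precisely noting that $\alpha\mapsto-\alpha$ permutes the set $\{\xi^0,\ldots,\xi^{p-2}\}$ (since $-1$ is a $p$-th power residue class, being congruent to its own image) and checking that $\eta(-\alpha)\equiv-\eta(\alpha)\bmod p$ while $\alpha\beta$ reduces appropriately modulo $p$ in the exponent of $z$ — one checks the resulting exponent of $z$ is again of the required form $\eta(\alpha')\beta'/2$ with $\alpha'=-\alpha$, $\beta'=-\beta$. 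So $X_0^{(-1)}=X_0$, and then $X_i^{(-1)}=(X_0y^i)^{(-1)}=y^{-i}X_0^{(-1)}=y^{-i}X_0$; since $X_0$ is $K$-invariant and $\sigma,\tau$ were chosen so that conjugation issues are controlled, one argues $y^{-i}X_0=X_0y^{-i}$... actually the cleanest route is to observe $X_i=X_i^{(-1)}$ is equivalent to $X_0y^i=y^{-i}X_0$, which I would verify by the same exponent bookkeeping, or alternatively deduce it from the fact (already implicit) that $X_i$ is a basic set of the symmetric $S$-ring $\mathcal{A}$ together with $X_i^{(-1)}$ being the basic set containing the inverse of a representative.

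For part~(2), the key structural fact is that $X_i$ meets each coset of $Z$ in $G$ in exactly one point, i.e. $X_i$ is a transversal for $Z$; this is essentially Lemma~\ref{orbits}-style content and should follow because the map $x^\alpha y^\beta z^{\eta(\alpha)\beta/2}\mapsto(\alpha,\beta)$ is a bijection from $X_0$ onto $\{\xi^0,\ldots,\xi^{p-2}\}\times\{0,\ldots,p-1\}$ and the $z$-coordinate is then determined — so two elements of $X_0$ in the same $Z$-coset would have the same $(\alpha,\beta)$ hence coincide; and $|X_0|=p(p-1)=|G:Z|-\,|\{y\text{-line}\}|$... wait, $|G:Z|=p^2$, and $|X_0|=p(p-1)$, so $X_0$ alone is \emph{not} a full transversal — it misses exactly the $p$ cosets $Zy^j$, $j=0,\ldots,p-1$ (the cosets with $\alpha=0$). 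That is precisely why one adjoins $Y=\{y^j\}$: $Z_i=X_i\cup Z$ should be read as adjoining the missing coset-representatives. I would therefore verify: $X_i$ hits every $Z$-coset except those of the form $Zy^j z^? $ with trivial $x$-part — more carefully, $X_i=X_0y^i$ hits the coset $Zx^\alpha y^{\beta+i}$ for each $(\alpha,\beta)$ with $\alpha\neq0$, i.e. exactly the $p(p-1)$ cosets whose $x$-exponent is nonzero, and $Y$ contributes the remaining $p$ cosets $Zy^j$. Hence $Z_i=X_i\cup Y$ is a right transversal for $Z$; that it is also a left transversal follows since $Z$ is central. Symmetrically, $X_i$ as a transversal for $Y$: here the relevant projection is $g=x^\alpha y^\beta z^\gamma\mapsto$ its image in $G/Y$, and one checks $X_i\cup Z$ surjects onto $G/Y$ with the right cardinality $p^2=|G:Y|$ (noting $|X_i|+|Z|=p(p-1)+p=p^2$), injectivity following from the coordinate bijection plus the fact that $Y$ has trivial intersection with $X_i\cup Z$; here one must be slightly careful since $Y$ is not normal, so "transversal for $Y$" means a set meeting each right coset $Yg$ once — I would check directly that $x^{\alpha_1}y^{\beta_1}z^{\gamma_1}$ and $x^{\alpha_2}y^{\beta_2}z^{\gamma_2}$ lie in the same right $Y$-coset iff $\alpha_1=\alpha_2$ and $\gamma_1=\gamma_2$, which reduces the claim to the bijectivity of $(\alpha,\gamma)\mapsto$ the element, and then count.

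I expect the main obstacle to be the careful exponent arithmetic modulo $p$ versus modulo $p^2$ in part~(1): the coefficient $\eta(\alpha)$ lives in $\mathbb{Z}_p$ while $\alpha$ ranges over $p$-th powers in $(\mathbb{Z}/p^2)^\times$, and the identity $y^\beta x^\alpha y^{-\beta}=x^\alpha z^{\alpha\beta}$ forces one to track when products like $\alpha\beta$ should be reduced mod $p$ (since $z$ has order $p$) — getting the normalization $z^{\eta(\alpha)\beta/2}$ to match up after inversion requires checking that $\eta(-\alpha)=-\eta(\alpha)$ in $\mathbb{Z}_p$ and that the "cocycle" correction terms cancel, which is the one genuinely fiddly computation. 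Part~(2), by contrast, is mostly a matter of organizing a counting argument around the normal form $g=x^\alpha y^\beta z^\gamma$ and observing that $\langle x\rangle Z=\langle x\rangle$ has index $p$ while the cosets of $Z$ (resp.\ right cosets of $Y$) are indexed cleanly by pairs of exponents; once the normal form is in hand the verification is routine.
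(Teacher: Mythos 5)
Your plan is correct in substance, but for part (1) it takes a genuinely different route from the paper. The paper sidesteps all of the exponent bookkeeping you (rightly) identify as the fiddly point: it notes that $\tau_0=\tau^{(p-1)/2}\in K$ fixes $Y$ pointwise and inverts every element of $\langle x\rangle$, that each $X_i$ contains some $x_i\in\langle x\rangle$ (take $\beta=-i$ in the normal form), so $x_i^{-1}=\tau_0(x_i)\in\tau_0(X_i)=X_i$; since $X_i$ and $X_i^{(-1)}$ are both basic sets of $\mathcal{A}$ and they meet, they coincide. This is exactly the alternative you gesture at in passing, except that the missing ingredient there --- exhibiting a representative whose inverse visibly stays in the same $K$-orbit --- is what $\tau_0$ supplies. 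Your direct computation also works: $(x^\alpha y^\beta z^{\eta(\alpha)\beta/2})^{-1}=x^{-\alpha}y^{-\beta}z^{\alpha\beta-\eta(\alpha)\beta/2}$, the $z$-exponent reduces mod $p$ to $\eta(\alpha)\beta/2=\eta(-\alpha)(-\beta)/2$, and $-1\in\langle\xi\rangle$ since $(-1)^p=-1$; one must still check $y^{-i}X_0=X_0y^i$, which holds by the same bookkeeping. So your route is valid but longer, and it buys nothing here since the $K$-orbit structure is already available.

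For part (2) the paper offers no argument beyond ``follows from the definition,'' so your explicit count is more detailed than the original; it is essentially right, but one step is false as stated: $x^{\alpha_1}y^{\beta_1}z^{\gamma_1}$ and $x^{\alpha_2}y^{\beta_2}z^{\gamma_2}$ lie in the same \emph{right} coset $Yg$ iff $\alpha_1=\alpha_2$ and $\gamma_2-\gamma_1=\eta(\alpha_1)(\beta_2-\beta_1)$ (because $y^jx^\alpha=x^\alpha z^{\alpha j}y^j$), not iff $\alpha_1=\alpha_2$ and $\gamma_1=\gamma_2$; the latter is the criterion for \emph{left} cosets $gY$. The conclusion survives the correction: for two elements of $X_0$ the corrected condition forces $\eta(\alpha)(\beta_2-\beta_1)/2=\eta(\alpha)(\beta_2-\beta_1)$, hence $\beta_1=\beta_2$. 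Alternatively, once part (1) gives $Z_i^{(-1)}=Z_i$, being a left transversal for $Y$ already implies being a right one, so you only need your (correct) left-coset criterion. Also note the label slips ($Y_i$ versus $Z_i$) in that paragraph, which should be tidied before this could be spliced in.
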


\begin{proof}
Note that $\tau_0=\tau^{\frac{p-1}{2}}\in K$ acts trivially on $Y$ and inverses every nontrivial element of $\langle x \rangle\cong C_{p^2}$. Each $X_i$ contains an element $x_i$ from $\langle x \rangle$. So $x_i^{-1}=\tau_0(x_i)\in \tau_0(X_i)=X_i$, where the latter equality holds because $X_i$ is an orbit of $K$ and $\tau_0\in K$. This yields that $X_i\cap X_i^{(-1)}\neq \varnothing$. Since $X_i$ is a basic set of $\mathcal{A}$, the latter implies that $X_i=X_i^{(-1)}$  and part~$(1)$ holds. Part~$(2)$ follows from the definition of $X_i$.
\end{proof}

\begin{lemm}\label{sigma}
For every $i\in \{0,\ldots,p-1\}$, there is $\sigma_i\in \aut(G)$ such that $\sigma_i(X_0)=X_i$, $\sigma_i(Y)=Y$, and $\sigma_i(Z)=Z$.
\end{lemm}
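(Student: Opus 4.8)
The plan is to write down $\sigma_i$ explicitly and verify the three required properties by a direct computation in $G\cong M_{p^3}$. I would take $\sigma_i$ to be the map of $G=\langle x\rangle\rtimes\langle y\rangle$ determined on generators by $x\mapsto xy^i$, $y\mapsto y$, $z\mapsto z$. To see this is a well-defined automorphism, note that it preserves the defining relation: $y(xy^i)y^{-1}=(yxy^{-1})y^i=xzy^i=(xy^i)z$, and it preserves the orders of the generators, since $(xy^i)^p=x^p y^{ip}z^{\binom p2 i}=z$ (here $p$ odd gives $\binom p2\equiv 0\pmod p$, and $y^{ip}=1$, $x^p=z$), so $|xy^i|=p^2$, while $|\sigma_i(y)|=p$. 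Hence $\sigma_i$ extends to an endomorphism of $G$ which is onto because $\langle xy^i,y\rangle=\langle x,y\rangle=G$, so $\sigma_i\in\aut(G)$; by construction $\sigma_i(Y)=Y$ and $\sigma_i$ fixes $Z=Z(G)$ pointwise. (Equivalently, $\sigma_i$ equals the inner automorphism $g\mapsto y^{t}gy^{-t}$ with $-2t\equiv i\pmod p$, or $\iota_{y^{-i/2}}\circ\sigma^{\,i}$; any of these descriptions makes membership in $\aut(G)$ and the fixing of $Y,Z$ transparent.)

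Next I would compute $\sigma_i(X_0)$. By Lemma~\ref{p21}, $X_0=\{x^\alpha y^\beta z^{\eta(\alpha)\beta/2}:\alpha\in\{\xi^0,\dots,\xi^{p-2}\},\ \beta\in\{0,\dots,p-1\}\}$ and $X_i=X_0y^i=\{x^\alpha y^{\beta}z^{\eta(\alpha)(\beta-i)/2}:\alpha,\beta\}$ after multiplying out. The arithmetic input is the power rule in the class-$2$ group $G$ for odd $p$, namely $(xy^i)^\alpha=x^\alpha y^{i\alpha}z^{\binom{\alpha}{2}i}$ (equivalently $y^{t}x^\alpha y^{-t}=x^\alpha z^{t\alpha}$), together with $\eta(\alpha)\equiv\alpha\pmod p$, which holds since $\eta\equiv\xi\pmod p$. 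Applying $\sigma_i$ to a generic $g=x^\alpha y^\beta z^{\eta(\alpha)\beta/2}\in X_0$ and using $\sigma_i(y)=y$, $\sigma_i(z)=z$, we get $\sigma_i(g)=(xy^i)^\alpha y^\beta z^{\eta(\alpha)\beta/2}=x^\alpha y^{i\alpha+\beta}z^{\binom{\alpha}{2}i+\eta(\alpha)\beta/2}$. Writing $\beta'=i\alpha+\beta$, which runs over $\{0,\dots,p-1\}$ as $\beta$ does, and replacing $\eta(\alpha)$ by $\alpha$ in the central exponent, the identity $\tfrac{\alpha(\alpha-1)}2 i+\tfrac{\alpha\beta}2=\tfrac{\alpha((i\alpha+\beta)-i)}2=\tfrac{\eta(\alpha)(\beta'-i)}2$ shows $\sigma_i(g)=x^\alpha y^{\beta'}z^{\eta(\alpha)(\beta'-i)/2}\in X_i$. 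Thus $\sigma_i(X_0)\subseteq X_i$, and since $\sigma_i$ is a bijection of $G$ and $|X_0|=|X_i|$, we conclude $\sigma_i(X_0)=X_i$.

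The only delicate point is bookkeeping of the central ($z$-)exponents and the omnipresent factor $\tfrac12=(p+1)/2\pmod p$: one must check that $\sigma_i$ respects $yxy^{-1}=xz$ so that it really is an automorphism, derive the power/commutator formula in $M_{p^3}$ using that $p$ is odd (so $\binom p2\equiv 0$ and $2$ is a unit mod $p$), and carry out the exponent identity above correctly. All of this is routine, so once the explicit form of $X_0$ from Lemma~\ref{p21} and the power rule in $G$ are at hand, the verification is a short calculation; if one prefers to avoid choosing representatives one can instead run the identical computation with $\sigma_i$ taken to be conjugation by $y^{t}$, $-2t\equiv i\pmod p$, via $y^{t}x^\alpha y^{-t}=x^\alpha z^{t\alpha}$.
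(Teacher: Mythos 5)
Your main argument is correct and is essentially the paper's own proof: you take the same $\sigma_i\colon x\mapsto xy^i,\ y\mapsto y,\ z\mapsto z$ and run the same central-exponent computation showing $\sigma_i(x^\alpha y^\beta z^{\eta(\alpha)\beta/2})\in X_0y^i$; the only difference is that you verify $\sigma_i\in\aut(G)$ directly from the relations, whereas the paper cites Winter for its existence. One slip in your parenthetical: $\sigma_i$ is \emph{not} the inner automorphism $\iota_{y^t}$ with $-2t\equiv i\pmod p$, since $\iota_{y^t}$ sends $x\mapsto xz^{t}$ rather than $x\mapsto xy^i$ (for $i\neq 0$ your $\sigma_i$ is outer); the correct identity is the one you also state, $\sigma_i=\iota_{y^{-i/2}}\circ\sigma^{i}$. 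This does not damage the proof — and, amusingly, the inner automorphism $\iota_{y^{-i/2}}$ by itself \emph{does} satisfy the lemma's conclusion (it fixes $Y$ and $Z$ and maps $X_0$ onto $X_{-2t}=X_i$, since $y^tx^\alpha y^{-t}=x^\alpha z^{t\eta(\alpha)}$ shifts the central exponent by $\eta(\alpha)\cdot 2t/2$), so your suggested alternative route would work, just not via the asserted equality of the two maps.
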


\begin{proof}
Due to~\cite{Winter}, there exists $\sigma_i\in \aut(G)$ such that $\sigma_i:(x,y,z)\mapsto (xy^i,y,z)$ for every $i\in\{0,\ldots,p-1\}$. By the definition of $\sigma_i$, we have $\sigma_i(Y)=Y$ and $\sigma_i(Z)=Z$. Given $\alpha\in \{\xi^0,\ldots,\xi^{p-2}\}$ and $\beta\in\{0,\ldots,p-1\}$, a straightforward computation shows that
$$\sigma_i(x^\alpha y^\beta z^{\frac{\eta(\alpha)\beta}{2}})=(xy^i)^\alpha y^\beta z^{\frac{\eta(\alpha)\beta}{2}}=x^\alpha y^{\beta+i\eta(\alpha)}z^{\frac{\eta(\alpha)\beta}{2}+\frac{i\eta(\alpha)(\eta(\alpha)-1)}{2}}=$$
$$=(x^\alpha y^{\beta+i(\eta(\alpha)-1)}z^{\frac{\eta(\alpha)(\beta+i(\eta(\alpha)-1)}{2}})y^i\in X_0y^i=X_i,$$
where the second equality holds by successive applying of $yxy^{-1}=xz$. Therefore
$$\sigma_i(X_0)=X_i$$ 
and we are done. 
\end{proof}

\begin{lemm}\label{p23}
For every $i\in\{0,\ldots,p-1\}$, the set $S_i=X_i\cup Y^\# \cup Z^\#$ is a reversible PDS with parameters $(p^3,p^2+p-2,p-2,p+2)$.
\end{lemm}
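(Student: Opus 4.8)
The plan is to realize $S_i$ as a fusion of the $S$-ring $\mathcal{A}=\cyc(K,G)$ described in Lemma~\ref{p21} and then compute $\underline{S_i}\cdot\underline{S_i}^{(-1)}$ directly in $\mathbb{Z}G$. Since Lemma~\ref{sigma} supplies an automorphism $\sigma_i\in\aut(G)$ with $\sigma_i(X_0)=X_i$, $\sigma_i(Y)=Y$, $\sigma_i(Z)=Z$, and since $Y^\#$ and $Z^\#$ are evidently $\sigma_i$-invariant, we have $\sigma_i(S_0)=S_i$; hence it suffices to treat $i=0$, i.e. to show $S_0=X_0\cup Y^\#\cup Z^\#$ is a reversible PDS with parameters $(p^3,p^2+p-2,p-2,p+2)$. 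Reversibility is immediate: $X_0=X_0^{(-1)}$ by Lemma~\ref{p22}(1), and $Y^\#,Z^\#$ are reversible since $Y,Z$ are subgroups. The cardinality is $|X_0|+|Y^\#|+|Z^\#|=p^2+(p-1)+(p-1)=p^2+2p-2$; here I should double-check the statement's claim $p^2+p-2$ against the count — in fact $|S_0|=|X_0|+(p-1)+(p-1)$ and $|X_0|=p^2$, giving $p^2+2p-2$, but the intended $S_i$ may use $Y^\#$ and $Z^\#$ with overlap, or $|X_0|$ should be counted as $p^2$ minus the identity; I would reconcile this by writing $\underline{S_0}=\underline{X_0}+\underline{Y^\#}+\underline{Z^\#}$ and noting $X_0\cap Y=X_0\cap Z=\{e\}$ is \emph{not} the case since $e\in X_0$, so actually $\underline{S_0}=\underline{X_0}+\underline{Y}+\underline{Z}-2e$, $|S_0|=p^2+p+p-2=p^2+2p-2$ — I will adopt whichever bookkeeping makes the parameters consistent and flag it.

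The core computation is the following. Write $\underline{S_0}=\underline{X_0}+\underline{Y}+\underline{Z}-2e$ and expand
\[
\underline{S_0}^2=\underline{X_0}^2+\underline{Y}^2+\underline{Z}^2+\underline{X_0}\,\underline{Y}+\underline{Y}\,\underline{X_0}+\underline{X_0}\,\underline{Z}+\underline{Z}\,\underline{X_0}+\underline{Y}\,\underline{Z}+\underline{Z}\,\underline{Y}-4(\underline{X_0}+\underline{Y}+\underline{Z})+4e+\text{(lower terms)}.
\]
The terms $\underline{Y}^2=p\underline{Y}$, $\underline{Z}^2=p\underline{Z}$, and $\underline{Y}\,\underline{Z}=\underline{Z}\,\underline{Y}=\underline{Y\times Z}$ are elementary. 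By Lemma~\ref{p22}(2), $X_0\cup Y$ and $X_0\cup Z$ are two-sided transversals, which forces $\underline{X_0}\,\underline{Z}=\underline{Z}\,\underline{X_0}=\underline{G}-\underline{Y^\#}\cdot(\text{something})$ — more precisely, $X_0$ being a transversal for $Z$ means $\underline{Z}\,\underline{X_0}=\underline{G}$ if $X_0$ meets every $Z$-coset, and similarly $\underline{Y}\,\underline{X_0}=\underline{X_0}\,\underline{Y}=\underline{G}$ by the $Y$-transversal property; I would verify the side on which $X_0$ is a \emph{left} versus \emph{right} transversal and record $\underline{X_0}\,\underline{Y}=\underline{Y}\,\underline{X_0}=\underline{X_0}\,\underline{Z}=\underline{Z}\,\underline{X_0}=\underline{G}$. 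The one genuinely substantive piece is $\underline{X_0}^2=\underline{X_0}\,\underline{X_0}^{(-1)}$, and here I would invoke Proposition~\ref{p24} (the yet-to-be-stated result that each basic set outside $Y\times Z$ together with $\{e\}$ is a relative difference set): $X_0$ is a reversible semiregular $(p^2,p,p^2,p)$-RDS with forbidden subgroup $Z$, so $\underline{X_0}^2=p^2e+p(\underline{G}-\underline{Z})$. Substituting everything and collecting coefficients of $e$, of elements in $Y^\#$, of elements in $Z^\#$, of elements in $X_0\setminus\{e\}$, and of elements in $G\setminus(X_0\cup Y\cup Z)$, I expect to land on $\underline{S_0}^2=(p^2+2p-2)e+(p-2)\underline{S_0}+(p+2)(\underline{G}^\#-\underline{S_0})$, which is exactly the PDS relation with $\lambda=p-2$, $\mu=p+2$.

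The main obstacle is the careful bookkeeping of which elements lie in which of the disjoint pieces $\{e\}$, $Y^\#$, $Z^\#$, $X_0\setminus\{e\}$, $G\setminus(X_0\cup Y\cup Z)$, and ensuring the cross-products $\underline{X_0}\,\underline{Y}$, $\underline{X_0}\,\underline{Z}$ distribute over those pieces with uniform coefficients — in particular tracking how $\underline{G}$ contributes $1$ to \emph{every} element while the RDS term $-p\underline{Z}$ and the $\underline{Y\times Z}$ term contribute only to $Y\times Z$. A clean way to organize this is to observe (as the proof of Lemma~\ref{rdspds} does in a parallel situation) that the $\mathbb{Z}$-span of $e,\underline{X_0},\underline{Y},\underline{Z},\underline{Y\times Z},\underline{G}$ is a subalgebra of $\mathbb{Z}G$, so $\underline{S_0}^2$ automatically lands back in this span, and then the PDS relation is forced by comparing the already-known coefficients on the five basic sets $\{e\}$, $Y^\#$, $Z^\#$, $X_0^\#$, and the complement; one only needs to pin down the $e$-coefficient (which is $|S_0|$) and one other coefficient to determine $\lambda$ and $\mu$, after which the arithmetic identity $k(k-1)=(v-1)\mu+k(\lambda-\mu)$ can be used as a consistency check. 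Reversibility of $S_0$ then gives that $\cay(G,S_0)$ is strongly regular, completing the statement for all $i$ via the automorphisms $\sigma_i$.
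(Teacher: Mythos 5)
Your reduction to the case $i=0$ via the automorphisms $\sigma_i$ of Lemma~\ref{sigma} is exactly the paper's argument, but your treatment of the base case goes wrong in two places. First, you have misread what $X_0$ is. By Lemma~\ref{p21} it is the basic set $\{x^\alpha y^\beta z^{\eta(\alpha)\beta/2}\}$ with $\alpha$ ranging over the $p-1$ nonzero values, so $e\notin X_0$ and $|X_0|=p(p-1)=p^2-p$, not $p^2$; you appear to be carrying over the convention of Section~6, where $X_i$ denotes the basic set \emph{together with} the identity. With the correct count, $|S_0|=(p^2-p)+2(p-1)=p^2+p-2$ and there is nothing to reconcile. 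The same confusion infects your cross terms: the transversals of Lemma~\ref{p22}(2) are $Y_0=X_0\cup Y$ and $Z_0=X_0\cup Z$, not $X_0$ itself, so the correct identities are $\underline{Y}\cdot\underline{X_0}=\underline{X_0}\cdot\underline{Y}=\underline{Z}\cdot\underline{X_0}=\underline{X_0}\cdot\underline{Z}=\underline{G}-\underline{Y\times Z}$ rather than $\underline{G}$.

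Second, and more seriously, the one step you yourself call genuinely substantive --- the value of $\underline{X_0}^2$ --- is obtained by invoking Proposition~\ref{p24}, but in the paper Proposition~\ref{p24} is \emph{deduced from} Lemma~\ref{p23}: equation~\eqref{s3} in its proof cites this lemma explicitly. Your argument is therefore circular, and the quadratic relation satisfied by $\underline{X_0}$ --- which is the entire content of the lemma --- is never established independently. (Note also that $X_0$ itself is not a $(p^2,p,p^2,p)$-RDS; it has the wrong cardinality, and the RDS with forbidden subgroup $Z$ is $Y_0=X_0\cup Y$.) Your fallback observation that the $\mathbb{Z}$-span of $e,\underline{X_0},\underline{Y},\underline{Z},\underline{Y\times Z},\underline{G}$ is a subalgebra begs the same question: the cyclotomic $S$-ring $\mathcal{A}$ has $3p$ basic sets, and the assertion that $\underline{X_0}^2$ collapses into this six-dimensional span is precisely what needs proving. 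The paper sidesteps all of this by importing the $i=0$ case wholesale: $S_0$ is exactly the partial difference set constructed by Swartz in~\cite{Swartz}, and that external result carries the load. To make your direct computation honest you would have to compute $\underline{X_0}\cdot\underline{X_0}^{(-1)}$ from the explicit description of $X_0$ in Lemma~\ref{p21} by a solution count analogous to the one in Proposition~\ref{multiplication} --- which is exactly the step your proposal leaves blank.
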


\begin{proof}
The set $S_0$ is exactly a reversible PDS with parameters $(p^3,p^2+p-2,p-2,p+2)$ constructed in~\cite{Swartz}. From Lemma~\ref{sigma} it follows that $\sigma_i(S_0)=S_i$. Therefore $S_i$ is a reversible PDS with the same parameters as $S_0$.
\end{proof}

\begin{prop}\label{p24}
For every $i\in\{0,\ldots,p-1\}$, the sets $Y_i=X_i\cup Y$ and $Z_i=X_i\cup Z$ are semiregular reversible RDSs with parameters $(p^2,p,p^2,p)$ and forbidden subgroups $Z$ and $Y$, respectively. 
\end{prop}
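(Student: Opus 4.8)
The plan is to deduce Proposition~\ref{p24} from the structure of the $S$-ring $\mathcal{A}=\cyc(K,G)$ described in Lemma~\ref{p21} together with the PDS from Lemma~\ref{p23}. First I would record the elementary facts already available: by Lemma~\ref{p22}(1) each $X_i$ is reversible, so $Y_i^{(-1)}=Y_i$ and $Z_i^{(-1)}=Z_i$; by Lemma~\ref{p22}(2) the set $Y_i$ is a (left and right) transversal for $Z$ in $G$ and $Z_i$ is a transversal for $Y$ in $G$. In particular $|Y_i|=|Z_i|=p^2=|G:Z|=|G:Y|$, which is exactly the cardinality a semiregular RDS with these parameters must have. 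Since transversality already gives $|Y_i\cap Zg|=1$ and $|Z_i\cap Yg|=1$ for all $g$, the only thing left is to verify the defining group-ring identity $\underline{Y_i}\cdot\underline{Y_i}^{(-1)}=p^2 e+p(\underline{G}-\underline{Z})$ and similarly for $Z_i$ with $Y$ in place of $Z$.

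The key step is to extract $\underline{Y_i}^2$ from the PDS identity for $S_i=X_i\cup Y^\#\cup Z^\#$ proved in Lemma~\ref{p23}. Writing $\underline{S_i}=\underline{X_i}+\underline{Y}+\underline{Z}-2e$ (since $Y\cap Z=\{e\}$), the PDS equation $\underline{S_i}\cdot\underline{S_i}^{(-1)}=|S_i|e+\lambda\underline{S_i}+\mu(\underline{G}^\#-\underline{S_i})$ with $(|S_i|,\lambda,\mu)=(p^2+p-2,p-2,p+2)$ expands, using reversibility of $X_i$, $Y$, $Z$, into a linear relation among $\underline{X_i}^2$, $\underline{X_i}\cdot\underline{Y}$, $\underline{Y}\cdot\underline{X_i}$, $\underline{X_i}\cdot\underline{Z}$, $\underline{Z}\cdot\underline{X_i}$, $\underline{Y}^2$, $\underline{Z}^2$, $\underline{Y}\cdot\underline{Z}$, $\underline{Z}\cdot\underline{Y}$, and lower-order terms. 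The ``easy'' factors are computable directly: $\underline{Y}^2=p\underline{Y}$, $\underline{Z}^2=p\underline{Z}$, $\underline{Y}\cdot\underline{Z}=\underline{Z}\cdot\underline{Y}=\underline{YZ}$, and by Lemma~\ref{p22}(2) (transversality, together with $Z\le Z(G)$ and $Y$ of order $p$) one gets $\underline{X_i}\cdot\underline{Z}=\underline{Z}\cdot\underline{X_i}=\underline{X_iZ}=\underline{G}$ and $\underline{X_i}\cdot\underline{Y}=\underline{Y}\cdot\underline{X_i}=\underline{G}$ as well (for the latter I would use that $Y_i$ is a \emph{both-sided} transversal for $Z$, hence $X_iZ=ZX_i=G\setminus(\text{something})$; more carefully, from $Y_i=X_i\cup Y$ being a left transversal for $Z$ one has $ZY_i=G$, i.e.\ $ZX_i\cup ZY=G$, and $ZY=YZ$ is the subgroup $Y\times Z$ of order $p^2$, so $ZX_i=G$ up to the cosets meeting $Y\times Z$; the cleanest route is to note $X_iZ$ is a union of $|X_i|/|\,\cdot\,|$ cosets and count). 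Substituting all of these collapses the PDS relation to an expression for $\underline{X_i}^2$ alone in terms of $e,\underline{X_i},\underline{Y},\underline{Z},\underline{G}$; concretely one expects $\underline{X_i}^2=(p^2-1)e+(p-2)\underline{X_i}+(p-1)(\underline{G}-\underline{Y}-\underline{Z}+e)$ or an equivalent form, matching what Lemma~\ref{rdspds} would predict for the $\lambda=n=p$ case. Then $\underline{Y_i}^2=\underline{X_i}^2+2\underline{X_i}\cdot\underline{Y}+\underline{Y}^2=\underline{X_i}^2+2\underline{G}+p\underline{Y}$ (using $\underline{X_i}\cdot\underline{Y}=\underline{Y}\cdot\underline{X_i}=\underline{G}$ and reversibility), and simplifying should yield exactly $p^2 e+p\underline{G}-p\underline{Z}$; symmetrically for $\underline{Z_i}^2$.

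An alternative, cleaner route that I would present in parallel or instead: invoke Lemma~\ref{rdspds} directly. For $Y_i$ with forbidden subgroup $Z$ (note $Z\trianglelefteq G$, $|Z|=p$), the set $S_i=Y_i^\#\cup Z^\#=X_i\cup Y^\#\cup Z^\#$ is precisely the reversible PDS of Lemma~\ref{p23} with parameters $(p^3,p^2+p-2,p-2,p+2)=(n^3,n^2+n-2,n-2,n+2)$ for $n=p$; by the ``if one of the above conditions holds and $\lambda=n$'' converse direction built into the proof of Lemma~\ref{rdspds}, this forces $Y_i$ to be a semiregular reversible $(p^2,p,p^2,p)$-RDS with forbidden subgroup $Z$. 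The same argument with the roles of $Y$ and $Z$ interchanged handles $Z_i$, once one checks $Y\trianglelefteq G$ — but here is the one genuine subtlety I want to flag: $Y=\langle y\rangle$ is \emph{not} normal in $G\cong M_{p^3}$ (indeed $yxy^{-1}=xz\notin Y$), so $Z_i$ is an RDS with a \emph{nonnormal} forbidden subgroup, which is exactly the unusual feature the introduction advertises. Therefore for $Z_i$ I cannot quote Lemma~\ref{rdspds} verbatim (it is stated for the case where antipodality produces a subgroup and does not require normality, so actually it still applies — $N$ is only assumed to be a subgroup there, not normal), but I must be careful that the semiregularity statement ``$X\cap Ng$ has at most one element'' and the identity $\underline{N}\cdot\underline{Z_i}=\underline{G}$ do not secretly use normality; they do not, since Lemma~\ref{p22}(2) explicitly gives \emph{both}-sided transversality. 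The main obstacle, then, is bookkeeping rather than conceptual: carefully justifying $\underline{X_i}\cdot\underline{Y}=\underline{Y}\cdot\underline{X_i}=\underline{G}$ and the analogous identity with $Z$ without invoking normality of $Y$, and then either (a) pushing the PDS identity through the substitution to land on the RDS identity, or (b) checking that the converse half of Lemma~\ref{rdspds}'s proof is genuinely symmetric in $Y$ and $Z$. I expect option (b), spelled out as ``apply Lemma~\ref{rdspds} with $N=Z$ resp.\ $N=Y$ to the reversible PDS $S_i$ of Lemma~\ref{p23}, noting that the $\lambda=n$ clause of that lemma is reversible in the roles of $N^\#$ and $X^\#$,'' to give the shortest rigorous proof.
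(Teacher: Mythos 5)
Your overall strategy -- recover the RDS identity for $\underline{Y_i}^2$ and $\underline{Z_i}^2$ from the PDS identity for $S_i$ (Lemma~\ref{p23}) together with the transversality statements of Lemma~\ref{p22}(2) -- is exactly the paper's strategy. But both of the routes you sketch have a concrete defect. In route (a), the identity $\underline{X_i}\cdot\underline{Y}=\underline{Y}\cdot\underline{X_i}=\underline{G}$ (and likewise with $Z$) is false: $|X_i|=p^2-p$, so $\underline{X_i}\cdot\underline{Y}$ is a sum of only $p^3-p^2$ group elements and cannot equal $\underline{G}$. What transversality actually gives is $\underline{Y_i}\cdot\underline{Z}=\underline{Z}\cdot\underline{Y_i}=\underline{G}$ and $\underline{Z_i}\cdot\underline{Y}=\underline{Y}\cdot\underline{Z_i}=\underline{G}$, whence $\underline{X_i}\cdot\underline{Y}=\underline{G}-\underline{Y\times Z}$, etc. You half-notice this (``$G$ up to the cosets meeting $Y\times Z$'') but then use the false identity anyway; carried through, your displayed computation $\underline{Y_i}^2=\underline{X_i}^2+2\underline{G}+p\underline{Y}$ and the final answer come out wrong by a multiple of $\underline{Y\times Z}$ (your guessed formula for $\underline{X_i}^2$ is also inconsistent with $(\underline{X_i}^2,e)=|X_i|=p^2-p$). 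In route (b), you invoke a ``converse direction built into'' Lemma~\ref{rdspds}, but the $\lambda=n$ clause of that lemma is a one-way implication (RDS $\Rightarrow$ PDS); no converse from the PDS $S_i$ back to the RDS is stated or proved there, so there is nothing to quote.

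The repair is small and is precisely what the paper does: instead of decomposing $\underline{S_i}$ as $\underline{X_i}+\underline{Y}+\underline{Z}-2e$, write $\underline{S_i}+2e=\underline{Y_i}+\underline{Z}=\underline{Z_i}+\underline{Y}$ and square \emph{these} groupings, so that the only cross terms needed are $\underline{Y_i}\cdot\underline{Z}+\underline{Z}\cdot\underline{Y_i}=2\underline{G}$ and $\underline{Z_i}\cdot\underline{Y}+\underline{Y}\cdot\underline{Z_i}=2\underline{G}$, both legitimate consequences of the two-sided transversality in Lemma~\ref{p22}(2) (and, as you correctly stress, requiring no normality of $Y$). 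Comparing with $(\underline{S_i}+2e)^2=\underline{S_i}^2+4\underline{S_i}+4e=(p^2+p+2)e+(p+2)\underline{G}^\#$ then yields $\underline{Y_i}^2=p^2e+p(\underline{G}-\underline{Z})$ and $\underline{Z_i}^2=p^2e+p(\underline{G}-\underline{Y})$ directly. Your observations on reversibility via Lemma~\ref{p22}(1) and on the nonnormal forbidden subgroup $Y$ are correct and match the paper.
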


\begin{proof}
It is easy to see that $S_i=Y_i^\#\cup Z^\#=Z_i^\#\cup Y^\#$. Therefore
\begin{equation}\label{s1}
(\underline{S_i}+2e)^2=(\underline{Y_i}+\underline{Z})^2=\underline{Y_i}^2+(\underline{Y_i}\cdot\underline{Z}+\underline{Z}\cdot\underline{Y_i})+p\underline{Z}=\underline{Y_i}^2+2\underline{G}+p\underline{Z}
\end{equation}
and
\begin{equation}\label{s2}
(\underline{S_i}+2e)^2=(\underline{Z_i}+\underline{Y})^2=\underline{Z_i}^2+(\underline{Z_i}\cdot\underline{Y}+\underline{Y}\cdot \underline{Z_i})+p\underline{Y}=\underline{Z_i}^2+2\underline{G}+p\underline{Y}.
\end{equation}
In the above computations, we used the equalities $Y_iZ=ZY_i=Z_iY=YZ_i=G$ which hold by Lemma~\ref{p22}(2). On the other hand,
\begin{equation}\label{s3}
(\underline{S_i}+2e)^2=\underline{S_i}^2+4\underline{S_i}+4e=(p^2+p+2)e+(p+2)\underline{G}^\#
\end{equation}
by Lemma~\ref{p23}. Equations~\eqref{s1} and~\eqref{s3} imply that
$$\underline{Y_i}^2=p^2e+p(\underline{G}-\underline{Z}),$$
whereas the equations~\eqref{s2} and~\eqref{s3} imply that
$$\underline{Z_i}^2=p^2e+p(\underline{G}-\underline{Y}).$$
Thus, $Y_i$ and $Z_i$ are RDSs with forbidden subgroups~$Z$ and $Y$, respectively, with parameters $(p^2,p,p^2,p)$. Since $X_i=X_i^{(-1)}$ (Lemma~\ref{p22}(1)), $Y_i$ and $Z_i$ are reversible.
\end{proof}

It should be mentioned that the sets $Y_i$ and the sets $Z_i$ do not form a linked system. Observe also that the sets $Z_i$ provide examples of RDSs with a nonnormal forbidden subgroup.

The corollary below immediately follows from Proposition~\ref{p24}.

\begin{corl}\label{expp2}
Let $p$ be an odd prime and $G$ an extraspecial group of order~$p^3$ and exponent~$p^2$. There is a semiregular reversible RDS in $G$ with forbidden subgroup~$Z(G)$ and parameters~$(p^{2},p,p^{2},p)$.
\end{corl}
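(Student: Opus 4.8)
The plan is to deduce Corollary~\ref{expp2} directly from Proposition~\ref{p24}. Recall that in Section~\ref{p-odd} we fixed $G\cong M_{p^3}$, presented as $\langle x\rangle\rtimes\langle y\rangle$ with $|x|=p^2$, $|y|=p$, $Z=Z(G)=\langle x^p\rangle$, and $Y=\langle y\rangle$. Proposition~\ref{p24} asserts (taking, say, the index $i=0$) that the set $Y_0=X_0\cup Y$ is a semiregular reversible RDS in $G$ with forbidden subgroup $Z$ and parameters $(p^2,p,p^2,p)$. Since $Z=Z(G)$, this is precisely the statement of the corollary: a semiregular reversible RDS in $G$ with forbidden subgroup $Z(G)$ and parameters $(p^2,p,p^2,p)$.

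Concretely, I would write: \emph{By Proposition~\ref{p24} applied with $i=0$, the set $Y_0=X_0\cup Y$ is a semiregular reversible relative difference set in $G$ with forbidden subgroup $Z$ and parameters $(p^2,p,p^2,p)$. Since $Z=Z(G)$, this proves the claim.} One could equally invoke any of the sets $Y_i$, or — since $Z=Z(G)$ is the relevant forbidden subgroup here, not $Y$ — only the $Y_i$ family (the $Z_i$ have forbidden subgroup $Y$, which is not central). No further computation is needed; all the work has already been done in Lemmas~\ref{p21}--\ref{p23} and Proposition~\ref{p24}.

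There is essentially no obstacle: the corollary is a specialization of Proposition~\ref{p24} to a single value of $i$, with the observation that the forbidden subgroup $Z$ appearing there coincides with the center of $G$. The only thing to be careful about is to quote the $Y_i$ (not the $Z_i$), since the statement requires the forbidden subgroup to be $Z(G)$, and among the two families produced by Proposition~\ref{p24} it is exactly the $Y_i$ whose forbidden subgroup is $Z=Z(G)$.
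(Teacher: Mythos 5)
Your proposal is correct and matches the paper exactly: the paper states that the corollary ``immediately follows from Proposition~\ref{p24},'' and your argument is precisely that specialization (taking any $Y_i$, whose forbidden subgroup is $Z=Z(G)$, and noting the uniqueness of the extraspecial group of order $p^3$ and exponent $p^2$ justifies working with the presentation fixed in Section~7.1). Your remark that one must quote the $Y_i$ rather than the $Z_i$ is a sensible precaution, though the paper leaves it implicit.
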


Every extraspecial $p$-group is a central product of extraspecial groups of order~$p^3$ and there are two extraspecial groups of order~$p^3$, namely, a Heisenberg group of order~$p^3$ and $M_{p^3}$. Therefore Theorem~\ref{main1} follows from Proposition~\ref{product0}, Corollary~\ref{heis2r}, and Corollary~\ref{expp2}.

\subsection{Case~$p=2$.}

There are two extraspecial groups of order~$8$, namely, the dihedral group $D_8$ and the quaternion group $Q_8$. The first of them is isomorphic to a Heisenberg group of dimension~$3$ over $\mathbb{F}_2$. From~\cite[Theorem~3.2,~Corollary~4.3]{EH} it follows that: (1) there is no a relative to the center difference set in $D_8$; (2) there is a unique (up to a group automorphism) a relative to a nonnormal subgroup of order~$2$ difference set in $D_8$; (3) there is a relative to the center difference set in $Q_8$. Based on the latter fact, we prove the following statement.

\begin{prop}\label{2group}
Let $G$ be a central product of~$r\geq 1$ quaternion groups $Q_8$. There is a closed linked system of semiregular RDSs in~$G$ with forbidden subgroup $Z(G)$ and parameters 
$$(2^{2r},2,2^{2r},2^{2r-1},2,2^{2r-1}-2^r+2^{r-1},2^{2r-1}+2^{r-1}).$$
\end{prop}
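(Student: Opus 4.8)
The plan is to mirror, in the case $p=2$, the structure used for odd $p$: first establish the base case $r=1$ by exhibiting a suitable closed linked system of RDSs in $Q_8$ relative to its centre, and then bootstrap to general $r$ via the central-product machinery of Proposition~\ref{product}. For the base case, recall from~\cite[Theorem~3.2,~Corollary~4.3]{EH} that $Q_8$ carries a relative to $Z(Q_8)$ difference set $X$ with parameters $(4,2,4,2)$; this $X$ is a transversal for $Z=Z(Q_8)$, hence semiregular, and in $Q_8$ every subgroup is normal, so $X$ is automatically i-commuting and reversible (up to replacing $X$ by a translate one may assume $e\in X$, and since all elements of order~$4$ square to the central involution, $X^{(-1)}$ is again such an RDS). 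The key point to check is that $\{X\}$, or a two-element family built from $X$ and a second RDS $X'$, satisfies the linked-system identity~\eqref{linked} with $s=2$; since $s\geq 2$ is required, I would exhibit a second RDS $X'$ (for instance an automorphic image or suitable translate of $X$) and verify that $\underline{X}\cdot\underline{X'}$ has the form $\mu\underline{X''}+\nu(\underline{G\setminus X''})$ for the third RDS $X''$ in the family, with $\mu,\nu$ given by~\eqref{munu-reg} for $m=4$, $n=2$, namely $\mu=\tfrac12(4+\sqrt4)=3$ and $\nu=\tfrac12(4-\sqrt4)=1$ (or the other sign choice). Because there are only finitely many $4$-subsets of $Q_8$ and the cyclotomic $S$-ring $\cyc(\aut(Q_8),Q_8)$ is small, this is a finite, essentially explicit verification; one may also borrow the Schur-ring viewpoint as in Section~\ref{heisenberg}, since $\aut(Q_8)\cong S_4$ acts on $Q_8$ with orbits $\{e\}$, $Z^\#$, and the six elements of order~$4$, and the basic set of size~$6$ splits into two RDS-plus-identity pieces under an index-$2$ subgroup, producing the required pair.

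Once the base case is in hand, the inductive step is immediate: a central product of $r$ copies of $Q_8$ is the central product of one copy of $Q_8$ with a central product of $r-1$ copies, all sharing the common central subgroup of order~$2$, which plays the role of $Z=G_1\cap G_2\leq Z(G)$ in Proposition~\ref{product}. By induction there is a closed linked system $\mathcal{L}_2$ of semiregular RDSs relative to $Z$ in the $(r-1)$-fold central product with parameters $(2^{2(r-1)},2,2^{2(r-1)},2^{2r-3},2,\ldots)$ and, say, the characteristic functions $(\chi,\psi)$ coming from the base case; taking $\mathcal{L}_1$ to be the base-case system in the last $Q_8$ factor (with the same index set $S$ and the same $(\chi,\psi)$, which one arranges by choosing the indexing consistently), Proposition~\ref{product} applied with $f=\mathrm{id}\in\aut(S^\infty)$ yields a closed linked system $\mathcal{L}=\{X_\alpha Y_\alpha\}$ in $G$. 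The parameter bookkeeping is the routine computation already done in the proof of Corollary~\ref{heis2r}: with $n=2$, $\lambda_1=2^{2(r-1)-1}$ wait --- more simply, one tracks $(m,n,k,\lambda)\mapsto(n^2\lambda_1\lambda_2,n,n^2\lambda_1\lambda_2,n\lambda_1\lambda_2)$ through Proposition~\ref{product0}, and $\mu,\nu$ through the formulas $\mu=\mu_1\mu_2+(n-1)\nu_1\nu_2$, $\nu=\mu_1\nu_2+\mu_2\nu_1+(n-2)\nu_1\nu_2$ of Proposition~\ref{product}, or equivalently one just reads off $\mu,\nu$ from~\eqref{munu-reg} using $m=2^{2r}$, $n=2$: $\mu=\tfrac12(2^{2r}\pm 2^r)=2^{2r-1}\pm 2^{r-1}$ and $\nu=\tfrac12(2^{2r}\mp 2^r)=2^{2r-1}\mp 2^{r-1}$, which matches the claimed $(2^{2r-1}-2^r+2^{r-1},\,2^{2r-1}+2^{r-1})$ after noting $2^{2r-1}-2^r+2^{r-1}=2^{2r-1}-2^{r-1}$ is the wrong sign --- so one must pick the sign branch giving $\nu=2^{2r-1}+2^{r-1}$, i.e. $\mu=2^{2r-1}-2^r+2^{r-1}$, and confirm that the base case realizes exactly that branch (this is why getting the base-case $\mu,\nu$ right, not merely up to sign, matters).

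I expect the main obstacle to be the base case, and specifically two points within it: first, confirming that a \emph{linked} pair exists in $Q_8$ at all --- i.e. that among the relative difference sets guaranteed by~\cite{EH} one can find two whose product obeys~\eqref{linked} with the correct $(\chi,\psi)$ --- and second, pinning down which sign branch of~\eqref{munu-reg} is realized, since both branches are numerically consistent with a linked system but only one matches the statement. I would resolve both by an explicit description of $\cyc(\aut(Q_8),G)$: list the four cosets $Zy^i$-analogue structure, identify the size-$6$ orbit of order-$4$ elements, note that an index-$2$ automorphism subgroup splits it into two transversals $X_0,X_1$ of $Z$, set $X_i'=X_i\cup\{e\}$ or rather work directly with the RDSs, and compute $\underline{X_0}\cdot\underline{X_1}$ in $\mathbb{Z}Q_8$ by hand; the computation is of the same flavour as Proposition~\ref{multiplication} but far shorter, and it simultaneously produces the characteristic functions (necessarily $\chi=\mathrm{id}$ on a two-element set exchanged or fixed appropriately, $\psi$ the unique remaining value) and the exact values $\mu=3$, $\nu=1$. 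With that settled, the induction and all parameter formulas follow formally from Propositions~\ref{product0} and~\ref{product} exactly as in Section~\ref{heisenberg}.
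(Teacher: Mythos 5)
Your overall strategy --- an explicit closed linked pair in $Q_8$ followed by induction through Proposition~\ref{product} --- is exactly the paper's, and the inductive half (with $f=\mathrm{id}$, the parameter recursion, and \eqref{munu-reg}) is fine. The problems are concentrated in the base case, which you never actually nail down. The most serious one is the sign branch: in your second paragraph you correctly deduce that the stated parameters force $\nu=2^{2r-1}+2^{r-1}$, hence at $r=1$ the base case must realize $\mu=1$, $\nu=3$; yet your final sentence asserts that the explicit computation produces ``the exact values $\mu=3$, $\nu=1$'', which is the opposite branch and contradicts your own deduction. With the paper's sets $X_1=\{e,a,b,ba\}$ and $X_2=X_1^{(-1)}$ one finds $\underline{X_1}^2=\underline{X_2}+3(\underline{G}-\underline{X_2})$ and $\underline{X_2}^2=\underline{X_1}+3(\underline{G}-\underline{X_1})$, i.e.\ $\mu=1$ and $\nu=3$, so the required branch does occur; but as written your argument terminates with base-case parameters that do not match the statement, and the whole point of your own discussion was that this is precisely where one must not be cavalier.

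Two further base-case claims are wrong, though less damaging. The cyclotomic mechanism you propose cannot work as stated: $\aut(Q_8)\cong S_4$ has a unique subgroup of index~$2$, namely the copy of $A_4$, and it contains $\Inn(Q_8)\cong C_2\times C_2$, whose orbits on the six elements of order~$4$ are the pairs $\{x,xz\}$; consequently $A_4$ still acts transitively on those six elements and does not split them into two transversals of $Z$. (A splitting into the two desired triples is achieved by an order-$3$ subgroup of $\aut(Q_8)$ meeting $\Inn(Q_8)$ trivially, or one simply writes the two sets down explicitly as above.) Also, your parenthetical assertion that $X$ may be taken reversible is false: a transversal of $Z$ cannot contain an order-$4$ element together with its inverse $xz$, so no reversible semiregular $(4,2,4,2)$-RDS relative to $Z(Q_8)$ exists. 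This last point is harmless, since closedness only requires $X_1^{(-1)}$ to be another member of the system, but it should be removed.
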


\begin{proof}
Firstly, let $r=1$ and $G=\langle a,b:~a^4=e,~a^2=b^2,~bab^{-1}=a^{-1}\rangle \cong Q_8$. Put $X_1=\{e,a,b,ba\}$ and $X_2=X_1^{(-1)}$. Due to~\cite[Corollary~3.3]{EH}, $X_1$ and hence $X_2$ are semiregular RDSs in $G$ with forbidden subgroup $Z=Z(G)=\langle a^2 \rangle \cong C_2$ and parameters~$(4,2,4,2)$. A straightforward computation implies that
$$\underline{X_1}^2=\underline{X_2}+3(\underline{G}-\underline{X_2})~\text{and}~\underline{X_2}^2=\underline{X_1}+3(\underline{G}-\underline{X_1}).$$
Therefore $\mathcal{L}=\{X_1,X_2\}$ is a closed linked system of semiregular RDSs in $G$ with forbidden subgroup~$Z$, parameters~$(4,2,4,2,2,1,3)$ and characteristic function $f_{\mathcal{L}}(i,i)=3-i$ for $i\in\{1,2\}$. For $n\geq 2$, the proposition follows from the above discussion,~\eqref{munu-reg}, and Proposition~\ref{product}.
\end{proof}

\section{Miscellaneous}

\subsection{Connection with known graphs.} 

One can check that the Cayley graphs whose connection sets are the reversible RDSs constructed in Sections~$6$ and~$7$ are isomorphic to the distance-regular antipodal graph of diameter~$3$ from~\cite[Proposition~12.5.1]{BCN} which is known also as the Thas-Somma graph (see~\cite{Somma,Thas}). Let us recall the construction of the above graph. Let $q$ be a prime power, $r\geq 1$, and $B$ a nondegenerate sympectic form on $\mathbb{F}_q^{2r}$. Let $\Gamma=\Gamma_{2r+1}(q)$ be the graph with vertex set $\mathbb{F}_q^{2r+1}$, where two vertices $(a,\alpha),(b,\beta)\in \mathbb{F}_q^{2r}\times \mathbb{F}_q$ are adjacent if and only if $B(a,b)=\alpha-\beta$ and $a\neq b$. It should be mentioned that $\Gamma$ does not depend on $B$, i.e. each two graphs constructed in such way from two distinct nondegenerate symplectic forms are isomorphic. The graph $\Gamma$ is a distance-regular antipodal graph of diameter~$3$ with intersection array $\{q^r-1,q^r-q^{r-1},1;1,q^{r-1},q^r-1\}$ (see~\cite[Section~12.4]{BCN}).

\subsection{Amorphic $S$-rings.} 

We do not know whether there exists an amorphic $S$-ring of Latin square type of rank at least~$4$ over a non-$p$-group which can be used for construction linked systems of RDSs from Section~$5$. It would be interesting to investigate this question and establish whether the above construction leads to linked systems of RDSs over non-$p$-groups. 

If a group $G$ has a  reversible Hadamard difference set (see~\cite[Section~VI.12]{BJL} for the definition), then there is an amorphic $S$-ring of Latin square type of rank~$3$ over $G$. According to Section~$5$, this $S$-ring can be used for construction of the trivial linked system consisting of one reversible RDS. It should be mentioned that in this case we obtain a construction of RDS from~\cite[Corollary~2.11]{AJP}. Note that due to~\cite[Theorem~4.2]{JS} each group of the form  
$$C_4^b \times C_{2^{c_1}}^2 \times \cdots C_{2^{c_j}}^2\times C_2^2 \times C_3^{2a} \times C_{p_1}^4 \times \cdots \times C_{p_i}^4,$$
where $p_1,\ldots,p_i$ are (not necessarily distinct) odd primes, and $a$, $b$, $c_1,\ldots,c_j$ are nonnegative integers, has a reversible Hadamard difference set. This leads us to reversible RDSs over non-$p$-groups.

\subsection{Equivalence and isomorphism}

Let $X\subseteq G$ be an RDS with a forbidden subgroup $N$. Then the set $\dev(X)=\{Xg:~g\in G\}$ is a block set of a symmetric block divisible design with point set $G$.

Let $G,\tilde{G}$ be finite groups. Two RDSs $X\subseteq G,\tilde{X}\subseteq\tilde{G}$  are called \emph{isomorphic} if the designs $(G,\dev(X))$ and $(\tilde{G},\dev(\tilde{X}))$ are isomorphic, i.e. there exists a bijection $\varphi:G\rightarrow \tilde{G}$ such that $\dev(\tilde{X}) = \{\varphi(S):~S\in\dev(X)\}$. If a design isomorphism may be realized by group isomorphism, the sets $X$ and $\tilde{X}$ are called \emph{equivalent}. For example, it follows from the Lemma~\ref{sigma} that the RDSs $Y_i$, $i=0,...,p-1$ are pairwise equivalent. The same is true for the sets $Z_i$, $i=0,...,p-1$. 

It is easy to see that equivalent RDSs are always isomorphic. On the other hand, there are numerous examples of isomorphic but inequivalent RDSs.

One can extend the above definitions to the systems of linked RDSs, but this is beyond the scope of this paper.
We hope to revisit this topic in a forthcoming paper.

\end{document}